\newtheorem{theorem}{Theorem}[section]
\newtheorem{lemma}[theorem]{Lemma}
\newtheorem{corollary}[theorem]{Corollary}
\newtheorem{remark}{Remark}
\newtheorem{example}{Example}[section]
\newcommand{\zed}{\ensuremath{\mathbb{Z}}} 
\title{New Results on Modular Golomb Rulers, Optical Orthogonal Codes and Related Structures}
\author[1]{Marco Buratti\thanks{This work has been performed under the auspices
of the G.N.S.A.G.A.\ of the C.N.R.\ (National Research
Council) of Italy}}
\author[2]{Douglas R.\ Stinson\thanks{D.R.\ Stinson's research is supported by  NSERC discovery grant RGPIN-03882.
}}
\affil[1]{Dipartimento di Matematica e Informatica, 
Universit\`{a} di Perugia, 06123, Perugia, Italy}
\affil[2]{David R.\ Cheriton School of Computer Science, University of Waterloo,
Waterloo, Ontario, N2L 3G1, Canada}
\date{\today}
\begin{document}
\maketitle

\begin{abstract}
We prove new existence and nonexistence results for modular Golomb rulers in this paper. We completely determine which 
modular Golomb rulers of order $k$ exist, for all $k\leq 11$, and we present a general existence result that holds for all $k \geq 3$. We also derive new nonexistence results for infinite classes of modular Golomb rulers and related structures such as difference packings, optical orthogonal codes, cyclic Steiner systems  and relative difference families. 

\end{abstract}

\section{Introduction and Definitions}

A \emph{Golomb ruler} of \emph{order} $k$ is a set of $k$ distinct integers, say
$x_1 < x_2 < \cdots < x_k$, such that all the differences $x_j - x_i$ ($i \neq j$) are distinct. 
To avoid trivial cases, we assume $k \geq 3$.
The \emph{length} of the ruler is $x_k - x_1$. For a survey of constructions of Golomb rulers, see \cite{Drakakis}.

A \emph{$(v,k)$-modular Golomb ruler}
(or $(v,k)$-MGR) is a set of $k$ distinct integers, \[0 \leq x_1 < x_2 < \cdots < x_k \leq v-1,\] such that all the differences $x_j - x_i \bmod v$ ($i \neq j$) are distinct elements of $\zed_v$. 
We define length and order as before.  It is obvious that a modular Golomb ruler is automatically a Golomb ruler. We can assume without loss of generality that $x_1 = 0$. 

Known results on modular Golomb rulers are summarized in \cite[\S VI.19.3]{HCD}. We state a few basic results and standard constructions now.

\begin{theorem}
If there exists a $(v,k)$-MGR, then $v \geq k^2 -k+1$. Further, a $(k^2-k+1,k)$-MGR is equivalent to a cyclic
$(k^2-k+1,k,1)$-difference set.
\end{theorem}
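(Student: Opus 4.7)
\medskip

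\noindent\textbf{Proof plan.} The plan is to prove both assertions by a straightforward counting argument on differences.

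First I would fix a $(v,k)$-MGR $\{x_1,\ldots,x_k\}\subseteq\zed_v$ and count the number of ordered pairs $(i,j)$ with $i\ne j$; there are exactly $k(k-1)$ such pairs, and each produces a \emph{nonzero} difference $x_j-x_i\bmod v$ (nonzero because the $x_i$'s are distinct mod $v$). By the defining property of an MGR, these $k(k-1)$ differences are pairwise distinct in $\zed_v$, and they all lie in $\zed_v\setminus\{0\}$, which has cardinality $v-1$. Hence $k(k-1)\le v-1$, i.e.\ $v\ge k^2-k+1$, proving the first assertion.

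For the equivalence in the second assertion, I would specialize to $v=k^2-k+1$ and note that then the inequality above is an equality: the $k(k-1)$ distinct nonzero differences must fill up $\zed_v\setminus\{0\}$ entirely. Thus each nonzero element of $\zed_v$ occurs exactly once as a difference $x_j-x_i$, which is precisely the definition of a cyclic $(k^2-k+1,k,1)$-difference set. Conversely, in any cyclic $(v,k,1)$-difference set every nonzero element occurs exactly once as a difference, so in particular all differences are distinct, giving an MGR; when $v=k^2-k+1$, this is a $(k^2-k+1,k)$-MGR. This establishes the equivalence in both directions.

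No step is a real obstacle here — the whole argument is a pigeonhole/counting argument, and the only thing to be careful about is ensuring that ``all differences distinct'' is applied to the $k(k-1)$ \emph{ordered} pairs (not the $\binom{k}{2}$ unordered ones), so that the count matches $|\zed_v\setminus\{0\}|=v-1$ rather than $(v-1)/2$.
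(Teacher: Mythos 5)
Your argument is correct and is exactly the standard counting/pigeonhole argument that the paper takes for granted (it states this theorem without proof as a basic known fact). Your care in counting the $k(k-1)$ ordered differences against $|\zed_v\setminus\{0\}|=v-1$, and the observation that equality forces every nonzero residue to appear exactly once (which is precisely the $\lambda=1$ difference-set condition, with the converse being immediate), is all that is needed.
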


Of course a $(q^2+q+1,q+1,1)$-difference set (i.e., a Singer difference set) is known to exist if $q$ is a prime power. So we have the following Corollary.
\begin{corollary}
\label{Singer.cor}
There exists a $(k^2-k+1,k,1)$-MGR if $k-1$ is a prime power.
\end{corollary}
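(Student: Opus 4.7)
The plan is to reduce the corollary to the equivalence stated in the preceding theorem and then invoke the existence of Singer difference sets, which the paragraph just above the corollary has flagged as the relevant tool.

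First I would set $q = k-1$ and observe that under this substitution the parameters transform as $k = q+1$ and $k^2 - k + 1 = (q+1)^2 - (q+1) + 1 = q^2 + q + 1$. Hence a $(k^2-k+1,k,1)$-difference set is precisely a $(q^2+q+1,\,q+1,\,1)$-difference set, i.e., a Singer-type difference set. When $q$ is a prime power, the projective plane $PG(2,q)$ admits a Singer cycle, and the image of a line under this cyclic action is a cyclic $(q^2+q+1,q+1,1)$-difference set; this is the classical construction we are entitled to cite.

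Next I would apply the second statement of the preceding theorem, which establishes that a $(k^2-k+1,k)$-MGR and a cyclic $(k^2-k+1,k,1)$-difference set are equivalent objects. Since $k-1$ a prime power yields such a cyclic difference set by the previous step, translating it back across the equivalence produces the required $(k^2-k+1,k)$-MGR.

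There is no real obstacle here: both ingredients (the equivalence and the existence of Singer difference sets for prime-power orders) are stated as background results and require nothing beyond a parameter substitution. The only thing to be careful about is the bookkeeping $q \leftrightarrow k-1$, so that the reader sees immediately why the Singer parameters $(q^2+q+1,q+1,1)$ match the MGR parameters $(k^2-k+1,k)$.
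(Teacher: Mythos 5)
Your proposal is correct and matches the paper's (implicit) argument exactly: substitute $q=k-1$ so that $k^2-k+1=q^2+q+1$, cite the existence of a cyclic Singer $(q^2+q+1,q+1,1)$-difference set for prime-power $q$, and apply the stated equivalence between $(k^2-k+1,k)$-MGRs and cyclic $(k^2-k+1,k,1)$-difference sets. Nothing further is needed.
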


It is widely conjectured that a $(q^2+q+1,q+1,1)$-difference set exists only if $q$ is a prime power, and this conjecture has been verified for all $q < 2000000$; see \cite{Gordon}.

\begin{theorem}[Bose]
\label{bose.thm}
\cite{bose}
For any prime power $q$, there is a $(q^2-1,q)$-MGR.
\end{theorem}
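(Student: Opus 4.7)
The plan is to use the classical Bose construction based on a primitive element of $\mathbb{F}_{q^2}$. Let $\alpha$ be a primitive element of the field $\mathbb{F}_{q^2}$, so that $\mathbb{F}_{q^2}^* = \langle \alpha \rangle$ is cyclic of order $q^2-1$, and let $\log_\alpha : \mathbb{F}_{q^2}^* \to \zed_{q^2-1}$ denote the discrete logarithm (which is a group isomorphism onto $\zed_{q^2-1}$ under addition). For every $\beta \in \mathbb{F}_q$ the element $\alpha - \beta$ is nonzero (since $\alpha \notin \mathbb{F}_q$), so we may set $b_\beta = \log_\alpha(\alpha - \beta)$ and define
\[
B = \{\, b_\beta : \beta \in \mathbb{F}_q \,\} \subseteq \zed_{q^2-1}.
\]
The representatives of $B$ in $\{0,1,\dots,q^2-2\}$, sorted in increasing order, are the candidate $(q^2-1,q)$-MGR.

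Next I would verify that $|B| = q$. If $b_{\beta_1} = b_{\beta_2}$ then $\alpha - \beta_1 = \alpha - \beta_2$ in $\mathbb{F}_{q^2}^*$, so $\beta_1 = \beta_2$; thus the map $\beta \mapsto b_\beta$ is injective and $B$ has exactly $q$ elements.

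The heart of the proof is to show that all nonzero differences of elements of $B$ in $\zed_{q^2-1}$ are distinct. Suppose $b_{\beta_1} - b_{\beta_2} \equiv b_{\beta_3} - b_{\beta_4} \pmod{q^2-1}$ with $\beta_1 \neq \beta_2$. Applying $\log_\alpha^{-1}$, this is equivalent to the identity
\[
(\alpha - \beta_1)(\alpha - \beta_4) = (\alpha - \beta_2)(\alpha - \beta_3)
\]
in $\mathbb{F}_{q^2}$. Expanding, one obtains $(\beta_2 + \beta_3 - \beta_1 - \beta_4)\,\alpha = \beta_2 \beta_3 - \beta_1 \beta_4$. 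Since $\{1,\alpha\}$ is linearly independent over $\mathbb{F}_q$, both sides must vanish, giving $\beta_1 + \beta_4 = \beta_2 + \beta_3$ and $\beta_1 \beta_4 = \beta_2 \beta_3$. Thus $\{\beta_1,\beta_4\} = \{\beta_2,\beta_3\}$ as multisets, and since $\beta_1 \neq \beta_2$ we conclude $\beta_1 = \beta_3$ and $\beta_4 = \beta_2$, so the two ordered pairs of differences coincide. Hence $B$ is a $(q^2-1,q)$-MGR.

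The main (and essentially only) obstacle is recognizing that the correct object to take logarithms of is the family $\{\alpha - \beta : \beta \in \mathbb{F}_q\}$; once this is chosen, the distinctness of differences reduces to a short computation exploiting the fact that $\alpha$ is not in the prime subfield $\mathbb{F}_q$. No case analysis or separate treatment of even characteristic is needed.
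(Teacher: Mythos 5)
Your proof is correct, and it is exactly the classical Bose construction: the paper states this theorem without proof, citing Bose's ``affine analogue of Singer's theorem,'' and that source proceeds precisely as you do, taking discrete logarithms of $\{\alpha-\beta : \beta \in \mathbb{F}_q\}$ for a primitive $\alpha \in \mathbb{F}_{q^2}$ and using the linear independence of $\{1,\alpha\}$ over $\mathbb{F}_q$ to force equality of difference pairs. No issues to report.
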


\begin{theorem}[Rusza]
\label{ruzsa.thm}
\cite{ruzsa}
For any prime  $p$, there is a $(p^2-p,p-1)$-MGR.
\end{theorem}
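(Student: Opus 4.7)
The plan is to exploit the isomorphism $\zed_{p^2-p}\cong \zed_{p-1}\times \zed_p$ given by the Chinese Remainder Theorem (valid since $\gcd(p,p-1)=1$) and to build the ruler inside this direct product. The Ruzsa-style construction plays the additive structure of $\zed_{p-1}$ off against the multiplicative structure of $\zed_p^{\ast}$: fixing a primitive root $g$ modulo $p$, I would take
\[
R=\{(i\bmod(p-1),\ g^i\bmod p):1\le i\le p-1\}\subset \zed_{p-1}\times \zed_p
\]
and pull it back through CRT to obtain $p-1$ residues modulo $p^2-p$, which are my candidate ruler.

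First I would verify $|R|=p-1$. Two indices $i,j\in\{1,\ldots,p-1\}$ give the same element of $R$ iff $i\equiv j\pmod{p-1}$ (the condition from the second coordinate coincides with this because $g$ has order $p-1$), and on this range that forces $i=j$. Next I would check that the $(p-1)(p-2)$ nonzero differences are pairwise distinct in $\zed_{p-1}\times \zed_p$. Suppose $(i-j,\ g^i-g^j)=(i'-j',\ g^{i'}-g^{j'})$ with $i\ne j$. Reading the first coordinate yields $t:=g^{i-j}\equiv g^{i'-j'}\pmod p$; because $i-j\not\equiv 0\pmod{p-1}$ (no two distinct elements of $\{1,\ldots,p-1\}$ differ by a multiple of $p-1$) one has $t\ne 1$. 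The second coordinate then becomes $(t-1)g^j\equiv(t-1)g^{j'}\pmod p$, so $g^j\equiv g^{j'}$, whence $j=j'$ and then $i=i'$.

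The only delicate point — and essentially the whole content of the proof — is the non-vanishing of $t-1$, which is precisely why $i,j$ are restricted to representatives of $\zed_{p-1}$. Once this is in hand the verification is mechanical, and the CRT isomorphism immediately promotes $R$ to a $(p^2-p,p-1)$-MGR. I do not foresee a serious obstacle; the main thing to be careful about is the bookkeeping in passing between the two pictures $\zed_{p^2-p}$ and $\zed_{p-1}\times\zed_p$, and the fact that the construction uses primality of $p$ only through the existence of a primitive root.
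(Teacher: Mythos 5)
The paper does not prove this result at all; it states it as known and cites Ruzsa's paper. Your argument --- pulling back the set $\{(i \bmod (p-1),\ g^i \bmod p) : 1 \le i \le p-1\}$ through the CRT isomorphism $\zed_{p^2-p}\cong\zed_{p-1}\times\zed_p$ --- is precisely Ruzsa's classical construction, and your verification is correct and complete, including the key point that $0<|i-j|<p-1$ forces $t=g^{i-j}\not\equiv 1\pmod p$, so $t-1$ is invertible and the cancellation step is valid.
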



A \emph{$(v,k;n)$-difference packing} is a set of $n$ $k$-element subsets of $\zed_v$, say $X_1, \dots , X_n$,
such that all the differences in the multiset  
\[ \{ x-y: x,y \in X_i , x \neq y , 1 \leq i \leq n\} \]
are nonzero and distinct. The following result is obvious.

\begin{theorem}
A $(v,k)$-MGR is equivalent to a $(v,k;1)$-difference packing.
\end{theorem}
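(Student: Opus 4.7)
The plan is to unwind the two definitions and verify that they describe the same combinatorial object, namely a $k$-element subset of $\zed_v$ whose $k(k-1)$ ordered pairwise differences are all distinct (and therefore automatically nonzero) in $\zed_v$. Since this is really a definition-chasing argument, the proof will consist of exhibiting the obvious bijection in both directions and checking that the defining conditions translate.

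First I would start with a $(v,k)$-MGR, i.e., integers $0 \leq x_1 < x_2 < \cdots < x_k \leq v-1$ whose pairwise differences $x_j - x_i$ ($i \neq j$) are distinct mod $v$. I would set $X_1 := \{x_1, \ldots, x_k\}$, viewed as a $k$-subset of $\zed_v$. The distinctness of the differences mod $v$ is exactly the condition demanded of a $(v,k;1)$-difference packing. The only thing to check is that these differences are nonzero in $\zed_v$, but this is immediate: the $x_i$ are distinct integers with $|x_j - x_i| < v$, so $x_j - x_i \not\equiv 0 \pmod{v}$.

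Conversely, given a $(v,k;1)$-difference packing consisting of a single block $X_1 \subseteq \zed_v$ with $|X_1|=k$, I would choose the unique set of canonical representatives $0 \leq x_1 < x_2 < \cdots < x_k \leq v-1$ of the elements of $X_1$. The defining property of the packing---that the differences $x-y$ for $x \neq y$ in $X_1$ are nonzero and distinct in $\zed_v$---is precisely the MGR condition applied to $x_1,\ldots,x_k$.

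The main obstacle is essentially nonexistent: the only subtle point is confirming the nonzero difference condition in the forward direction, which, as noted, follows from the range restriction $0 \leq x_i \leq v-1$. Everything else is transcription between the ``integer representatives in $\{0,\ldots,v-1\}$'' formulation used for MGRs and the ``subset of $\zed_v$'' formulation used for difference packings.
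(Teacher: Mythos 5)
Your proof is correct: it is exactly the definition-chasing argument that the paper considers too obvious to write out (the paper states the theorem with no proof beyond calling it obvious), and you rightly note the only small point worth checking, namely that distinct representatives in $\{0,\dots,v-1\}$ force the differences to be nonzero modulo $v$.
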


A \emph{$(v,b,r,k)$-configuration} is a set system  $(V,\mathcal{B})$, where 
$V$ is a set of $v$ \emph{points} and $\mathcal{B}$ is a set of $b$ \emph{blocks}, each of which contains exactly  $k$ points, such that the following properties hold:
\begin{enumerate}
\item no pair of points occurs in more than one block, and
\item every point occurs in exactly $r$ blocks.
\end{enumerate}
It is easy to see that the parameters of a $(v,b,r,k)$-configuration satisfy the equation
$bk = vr$. For basic results on configurations, see \cite[\S VI.7]{HCD}.
A $(v,b,r,k)$-configuration is \emph{symmetric} if $v = b$, which of course implies $r=k$. In this case we speak of it as 
a \emph{symmetric $(v,k)$-configuration}.
A symmetric $(v,k)$-configuration is \emph{cyclic} if there is a cyclic permutation of the $v$ points that maps every block to a block.

We state the following easy result without proof.

\begin{theorem}
A $(v,k)$-MGR is equivalent to a cyclic symmetric $(v,k)$-configuration.
\end{theorem}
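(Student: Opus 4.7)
The plan is to use the standard development construction in both directions.

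For the forward direction, given a $(v,k)$-MGR $B = \{x_1,\ldots,x_k\} \subseteq \zed_v$, I would take as point set $V = \zed_v$ and as block set the collection $\mathcal{B} = \{B + i : i \in \zed_v\}$ of translates. Four things need to be checked: (i) the $v$ translates are pairwise distinct --- otherwise $B + d = B$ for some $d \neq 0$ would make the $k$ ordered pairs $(x+d,x)$ with $x \in B$ all realize the same ordered difference $d$, violating the MGR condition; (ii) each point $p$ lies in exactly $k$ blocks, namely the translates $B + i$ with $i \in p - B$; (iii) every pair lies in at most one block, since $\{a,b\} \subseteq (B+i) \cap (B+j)$ with $i \neq j$ would yield two distinct ordered pairs in $B \times B$ realizing the common difference $b - a$; and (iv) translation by $1$ is a cyclic automorphism of order $v$.

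For the backward direction, suppose $(V,\mathcal{B})$ is a cyclic symmetric $(v,k)$-configuration with cyclic point-automorphism $\sigma$ of order $v$. Identify $V$ with $\zed_v$ so that $\sigma$ becomes translation by $1$; then $\zed_v$ acts on $\mathcal{B}$ by translation. If one can show this action is regular, then $\mathcal{B}$ consists of all translates of some base block $B$, and reversing the argument in (iii) shows $B$ is an MGR: a repeated ordered difference in $B$ would force the corresponding pair $\{0,d\}$ into two distinct translates of $B$.

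The main obstacle is establishing regularity of the block action. My plan is as follows. Fix a block $B$ and let $H = \{h \in \zed_v : B + h = B\}$ be its stabilizer; since $H$ acts freely on $\zed_v$ and preserves $B$, $|H|$ divides $k$. Assuming $|H| > 1$, pick $0 \neq h \in H$ and observe that for each $b \in B$ the pair $\{0,h\}$ lies in the translate $B - b$, and two such translates coincide iff $b - b' \in H$; hence $\{0,h\}$ is contained in $k/|H|$ distinct blocks. The configuration property forces $k/|H| \leq 1$, so $|H| = k$, which means $B$ is a coset of the unique subgroup $H_0 \leq \zed_v$ of order $k$. Counting orbits: if $a$ orbits have trivial stabilizer (size $v$) and $b$ have stabilizer $H_0$ (size $v/k$), then $av + b(v/k) = v$, i.e.\ $ak + b = k$. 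Since the $v/k$ cosets of $H_0$ form a single $\zed_v$-orbit, $b \leq 1$; the only nonnegative integer solution is then $(a,b) = (1,0)$, yielding a single regular block orbit and hence the desired base block.
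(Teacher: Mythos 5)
The paper states this equivalence without proof (it is labelled an ``easy result''), so there is no printed argument to compare against; your proof is correct and supplies exactly what such a proof needs. In particular, the forward direction (developing the ruler and checking distinctness of translates, replication $k$, and the at-most-one-block property) is the standard one, and you correctly identify and close the one genuinely nontrivial gap in the converse: showing the translation action on blocks is regular, via the stabilizer argument ($|H|>1$ forces $|H|=k$, so the block is a coset of the order-$k$ subgroup) combined with the orbit count $ak+b=k$, $b\leq 1$, which leaves only $(a,b)=(1,0)$.
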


For additional connections between Golomb rulers and symmetric configurations, see \cite{BS,DFGMP}.

A \emph{$(v,k,\lambda_a,\lambda_c)$-optical orthogonal code of size $n$} is a set $\mathcal{C}$ of $n$ $(0,1)$-vectors of length $v$, which satisfies the following properties:
\begin{enumerate}
\item the hamming weight of $\mathbf{x}$ is equal to $k$, for all $\mathbf{x} \in \mathcal{C}$,
\item \emph{autocorrelation:}
for all $\mathbf{x}  = (x_0, \dots , x_{v-1}) \in \mathcal{C}$, the following holds for all integers $\tau$ such that
$0 < \tau < v$:
\[ 
\sum _{i = 0}^{v-1} x_ix_{i+\tau} \leq \lambda_a,\]
where subscripts are reduced modulo $v$.
\item \emph{cross-correlation:}
for all $\mathbf{x}  = (x_0, \dots , x_{v-1}) \in \mathcal{C}$ and all $\mathbf{y}  = (y_0, \dots , y_{v-1}) \in \mathcal{C}$ with $\mathbf{x} \neq \mathbf{y}$, the following holds for all integers $\tau$ such that
$0 \leq \tau < v$:
\[ 
\sum _{i = 0}^{v-1} x_i y_{i+\tau} \leq \lambda_c,\]
where subscripts are reduced modulo $v$.
\end{enumerate}
We sometimes abbreviate the phrase ``optical orthogonal code'' to ``OOC.''
If $\lambda_a = \lambda_c = \lambda$, then the optical orthogonal code is denoted 
as a $(v,k,\lambda)$-optical orthogonal code.

Optical orthogonal codes were introduced  by Chung,  Salehi and Wei \cite{chung} in 1989 and have been 
studied by numerous authors since then. The following result establishes the equivalence of OOC and difference packings.

\begin{theorem}
\cite{chung}
A $(v,k;n)$-difference packing is equivalent to a $(v,k,1)$-optical orthogonal code of size $n$.
\end{theorem}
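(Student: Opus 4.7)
The plan is to set up the standard bijection between families of $k$-subsets of $\zed_v$ and codes of $(0,1)$-vectors of length $v$, and then match up the three defining properties of the OOC with the two conditions built into a difference packing. Concretely, to each block $X_i \subseteq \zed_v$ I associate its characteristic vector $\mathbf{x}^{(i)} = (x^{(i)}_0, \dots, x^{(i)}_{v-1})$ by $x^{(i)}_j = 1$ iff $j \in X_i$; this is clearly a bijection between $n$-element families of $k$-subsets of $\zed_v$ and $n$-element sets of $(0,1)$-vectors of length $v$ having constant Hamming weight. Hence property~(1) of the OOC corresponds exactly to the requirement that each $X_i$ has size $k$.

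For the correlation conditions I would expand the sums as counts of coincidences. The autocorrelation sum $\sum_i x^{(a)}_i x^{(a)}_{i+\tau}$ equals the number of ordered pairs $(y,x) \in X_a \times X_a$ with $x - y \equiv \tau \pmod v$, so requiring this to be at most $1$ for every $\tau \neq 0$ is the statement that each element of the internal difference multiset $\Delta(X_a) := \{x-y : x,y \in X_a,\ x \neq y\}$ occurs with multiplicity one. Similarly, the cross-correlation sum $\sum_i x^{(a)}_i x^{(b)}_{i+\tau}$ counts ordered pairs $(y,x) \in X_a \times X_b$ with $x - y \equiv \tau \pmod v$, so the cross-correlation bound $\lambda_c = 1$ says that for any two distinct blocks no shift $\tau$ occurs twice.

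The key step, which I expect to be the only real content in the argument, is to check that this cross-correlation condition is equivalent to the difference-packing requirement that $\Delta(X_a)$ and $\Delta(X_b)$ be disjoint for $a \neq b$. One direction: if $d \in \Delta(X_a) \cap \Delta(X_b)$, write $d = x_1 - x_2 = y_1 - y_2$ with $x_1,x_2 \in X_a$ distinct and $y_1,y_2 \in X_b$ distinct; rewriting as $x_1 - y_1 = x_2 - y_2 =: \tau$ yields two distinct ordered pairs in $X_a \times X_b$ with the same difference $\tau$, violating cross-correlation $\leq 1$. Conversely, two such pairs $(x_1,y_1) \neq (x_2,y_2)$ with $x_i - y_i = \tau$ force $x_1 \neq x_2$ and $y_1 \neq y_2$ (otherwise the pairs coincide), and then $x_1 - x_2 = y_1 - y_2$ exhibits a common element of $\Delta(X_a) \cap \Delta(X_b)$.

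Combining these observations, properties~(1)--(3) of the $(v,k,1)$-OOC translate exactly into the conditions that the $X_i$ are $k$-subsets of $\zed_v$ whose internal differences are individually distinct (from autocorrelation) and collectively pairwise disjoint (from cross-correlation), which is precisely the definition of a $(v,k;n)$-difference packing. Since the characteristic-vector map is a bijection, this establishes the equivalence.
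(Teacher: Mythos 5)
Your proof is correct. The paper itself does not prove this statement --- it is quoted from Chung, Salehi and Wei \cite{chung} --- and your characteristic-vector argument is the standard one: weight $k$ corresponds to block size $k$, autocorrelation $\leq 1$ to distinctness of differences within each block, and cross-correlation $\leq 1$ to disjointness of the difference sets of distinct blocks. Note also that your converse argument, as written, already covers the $\tau = 0$ cross-correlation case (two blocks sharing two elements would force a repeated difference), so nothing is missing.
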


The following result is proven in \cite{chung} by a simple counting argument.

\begin{theorem}
\label{optimalbound.thm}
If there exists a $(v,k,1)$-optical orthogonal code of size $n$, then
\[ n \leq \left\lfloor \frac{v-1}{k(k-1)} \right\rfloor .\]
\end{theorem}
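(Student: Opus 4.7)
The plan is to pass through the equivalence with difference packings (the previous theorem from \cite{chung}) and use a straightforward counting argument.

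First I would invoke the equivalence: a $(v,k,1)$-OOC of size $n$ corresponds to a $(v,k;n)$-difference packing, i.e., subsets $X_1,\dots,X_n \subseteq \zed_v$ each of size $k$, such that all the ordered differences $x-y$ with $x,y\in X_i$, $x\neq y$, and $1\leq i \leq n$, are nonzero and pairwise distinct elements of $\zed_v$.

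Next I would count the total number of such ordered differences. Each $X_i$ is a $k$-set, and therefore contributes exactly $k(k-1)$ ordered pairs $(x,y)$ with $x\neq y$, each giving rise to a nonzero difference. Summing over $i=1,\dots,n$ gives $nk(k-1)$ differences in total. By the defining property of the difference packing, these lie in $\zed_v \setminus \{0\}$, a set of size $v-1$, and are all distinct, so
\[ nk(k-1) \leq v-1. \]
Dividing by $k(k-1)$ and using integrality of $n$ yields $n \leq \lfloor (v-1)/(k(k-1)) \rfloor$, as required.

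There is really no obstacle here: the whole argument is a one-line pigeonhole/counting after translating the OOC condition into the language of differences. The only thing worth being careful about is making sure to count \emph{ordered} differences (so that each $k$-subset contributes $k(k-1)$, not $\binom{k}{2}$) and to note that the zero difference is explicitly excluded in the definition of a difference packing, which is what gives the denominator $v-1$ rather than $v$.
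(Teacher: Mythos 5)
Your proof is correct and is exactly the ``simple counting argument'' the paper attributes to \cite{chung} without reproducing it: translate the OOC into a $(v,k;n)$-difference packing, count the $nk(k-1)$ ordered differences, and note they must be distinct nonzero elements of $\zed_v$. Nothing is missing, and your care about ordered versus unordered differences and the exclusion of zero is precisely what makes the bound $\left\lfloor \frac{v-1}{k(k-1)} \right\rfloor$ come out correctly.
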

A $(v,k,1)$-optical orthogonal code is \emph{optimal} if the relevant inequality in Theorem \ref{optimalbound.thm} is met with equality.

\bigskip

Relative difference families have been 
introduced in \cite{Buratti98} as a natural generalization of relative difference sets. We define them now.
Let $H$ be a subgroup of a finite additive group $G$, and let $k$, $\lambda$ be positive integers.
A \emph{$(G,H,k,\lambda)$-relative difference family}, or $(G,H,k,\lambda)$-RDF for short, 
is a collection $X$ of $k$-subsets of $G$ (called \emph{base blocks}) whose list
of differences has no element in $H$ and covers all elements of $G\setminus H$ exactly $\lambda$ times.
If $G$ has order $v$ and $H$ has order $w$, we say that $X$ is a $(v,w,k,\lambda)$-RDF in $G$
\emph{relative to} $H$. 
If $X$ consists of  $n$ base blocks, it is evident that 
\begin{equation}\label{BaseBlocks}
\lambda(v-w)=k(k-1)n.
\end{equation}

When $H=\{0\}$ (or, equivalently, if  $w = 1$), one usually speaks of an \emph{ordinary} $(v,k,\lambda)$-difference family
or $(v,k,\lambda)$-difference family ($(v,k,\lambda)$-DF, for short), in $G$.
If $n=1$, then we refer to a $(G,H,k,\lambda)$-relative difference family as a \emph{$(G,H,k,\lambda)$-relative difference set}.
Analogously, a $(v,k,\lambda)$-difference family of size $n=1$ is a \emph{$(v,k,\lambda)$-difference set} ($(v,k,\lambda)$-DS, for short).

\subsection{Number-theoretic Background}

In this section, we record some number-theoretic results that we will be using later in the paper.

\begin{theorem}
\label{sumofsquares.thm} \quad \vspace{-.18in}\\
\begin{enumerate} 
\item A positive integer can be written as a sum of two squares if and only if its prime decomposition contains no prime $p \equiv 3 \bmod 4$ raised to an odd power.
\item A positive integer can be written as a sum of three  squares if 
and only if it is not of the form $4^a (8b+7)$, where $a$ and $b$ are nonnegative integers.
\item Any positive integer can be written as a sum of four squares.
\end{enumerate}
\end{theorem}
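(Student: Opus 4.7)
The three parts are classical: Fermat's two-square theorem, Legendre's three-square theorem, and Lagrange's four-square theorem. My plan is to prove them in that order, using in each case a multiplicative identity to reduce to prime values, together with a pigeonhole or descent argument.

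For part (1), necessity goes by induction on the $p$-adic valuation of $n$ for a prime $p\equiv 3\bmod 4$: if $n=a^2+b^2$ and $p\mid n$, then since $-1$ is a non-residue modulo $p$ one forces $p\mid a$ and $p\mid b$, so $p^2\mid n$ and $n/p^2$ is still a sum of two squares. Sufficiency combines the Brahmagupta--Fibonacci identity
\[ (a^2+b^2)(c^2+d^2)=(ac-bd)^2+(ad+bc)^2, \]
which reduces to prime powers, with the representation of primes $p\equiv 1\bmod 4$. For the latter I would invoke Thue's lemma: pick $r$ with $r^2\equiv -1\bmod p$ and find integers $a,b$ with $0<|a|,|b|<\sqrt{p}$ satisfying $a\equiv rb\bmod p$; then $0<a^2+b^2<2p$ and $p\mid a^2+b^2$, forcing $a^2+b^2=p$.

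For part (3), Euler's four-square identity reduces the problem to each prime $p$. For odd $p$, some multiple $mp$ with $1\leq m<p$ is a sum of four squares, by pigeonhole applied to $\{x^2:0\leq x\leq(p-1)/2\}$ and $\{-1-y^2:0\leq y\leq(p-1)/2\}$ modulo $p$. A Fermat-style descent then drives $m$ down to $1$: given $mp=a^2+b^2+c^2+d^2$ with $m>1$, replace each coordinate by its representative of least absolute value modulo $m$ to get $A^2+B^2+C^2+D^2=mm'$ with $m'<m$, and reapply the four-square identity to write $m'p$ as a sum of four squares. Necessity in part (2) is then a one-line residue argument: squares mod $8$ lie in $\{0,1,4\}$, so three of them cannot sum to $7\bmod 8$, and a factor-of-$4$ scaling extends the obstruction to all $4^a(8b+7)$.

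The technical heart, and where I expect the main obstacle, is the sufficiency in part (2). The cleanest classical route is through the theory of ternary quadratic forms: show that every positive $n$ not of the form $4^a(8b+7)$ is represented by the genus of $x^2+y^2+z^2$, and that this genus consists of a single class. The key inputs are the Hasse--Minkowski local-global principle for ternary forms together with Dirichlet's theorem on primes in arithmetic progressions to supply auxiliary primes of prescribed residue class. This is substantially deeper than parts (1) and (3), so in keeping with its role here as background I would simply cite a standard number-theory reference rather than reproduce the argument in full.
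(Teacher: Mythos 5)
Your proposal is mathematically sound, but it takes a more ambitious route than the paper, which offers no proofs at all: the authors treat all three statements as classical background and simply cite references --- a standard textbook (Rosen) for the two-square theorem, Mordell's \emph{Diophantine Equations} for Legendre's three-square theorem, and Lagrange's name for the four-square theorem. You instead reconstruct elementary arguments for parts (1) and (3): the valuation-induction plus Brahmagupta--Fibonacci identity plus Thue's lemma for two squares, and Euler's identity plus pigeonhole plus descent for four squares, both of which are the standard elementary proofs and are sketched accurately (the usual small cases, $2=1^2+1^2$ and $p^2=p^2+0^2$ for $p\equiv 3\bmod 4$, and the parity technicalities in the four-square descent, are omitted but routine). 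Your residue-mod-$8$ argument with the factor-of-$4$ reduction for the necessity in (2) is likewise correct. For the sufficiency in (2) you correctly identify the only genuinely deep ingredient --- representation by the genus of $x^2+y^2+z^2$ via the local-global principle and Dirichlet --- and, like the paper, you defer to a reference there. The trade-off is clear: the paper's citation-only treatment is appropriate since none of this is original to it, while your partial self-containment costs space but isolates exactly which part of the theorem resists an elementary proof.
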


\begin{proof}
Statement 1.\ is proven in many textbook on elementary number theory, e.g., \cite[Theorem 13.6]{Rosen}. 
The result 2.\ is known as
Legendre's Three-square Theorem  (for a proof of it, see, e.g., \cite[Chapter 20, Theorem 1]{Mordell}).
Finally, 3.\ is Lagrange's Four-square Theorem.
\end{proof}

\begin{lemma}
\label{sequence.lem}
For any positive integer $t$, there exist $t$ consecutive positive integers, none of which is a sum of two squares.
\end{lemma}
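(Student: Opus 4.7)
My plan is to invoke part 1 of Theorem \ref{sumofsquares.thm}: a positive integer fails to be a sum of two squares precisely when some prime $p \equiv 3 \pmod 4$ divides it to an odd power. So it suffices to produce $t$ consecutive positive integers, each of which is divisible by such a prime to an odd power; the cleanest way to do that is to arrange for each of them to be divisible by a prescribed prime $\equiv 3 \pmod 4$ exactly once.

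Concretely, I would first pick $t$ distinct primes $p_1, p_2, \ldots, p_t$ all congruent to $3 \pmod 4$. Such primes exist in arbitrary quantity (either by Dirichlet's theorem on primes in arithmetic progressions, or by the elementary Euclid-style argument showing that there are infinitely many primes $\equiv 3 \pmod 4$). Then I would apply the Chinese Remainder Theorem to the pairwise coprime moduli $p_1^2, p_2^2, \ldots, p_t^2$ to produce an integer $n$ satisfying
\[
n + i \equiv p_i \pmod{p_i^2} \qquad (i = 1, 2, \ldots, t).
\]
Because the system has infinitely many solutions, differing by multiples of $\prod_i p_i^2$, I can take $n$ large enough that $n+1, n+2, \ldots, n+t$ are all positive.

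The congruence $n + i \equiv p_i \pmod{p_i^2}$ forces $p_i \mid n+i$ but $p_i^2 \nmid n+i$, so the $p_i$-adic valuation of $n+i$ is exactly $1$, which is odd. By Theorem \ref{sumofsquares.thm}(1), $n+i$ cannot be written as a sum of two squares. Hence $n+1, n+2, \ldots, n+t$ are $t$ consecutive positive integers with the required property.

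There is no real obstacle here; the only point that needs care is that the CRT step must use $p_i^2$ as the modulus rather than $p_i$, since we need the exponent of $p_i$ in $n+i$ to be odd (and the simplest odd exponent to guarantee is $1$). Using $p_i$ alone would leave open the possibility that $p_i^2$ divides $n+i$, which could make the exponent even and thereby allow $n+i$ to be a sum of two squares.
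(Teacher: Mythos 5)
Your proof is correct and follows essentially the same route as the paper: choose $t$ distinct primes $\equiv 3 \bmod 4$, use the Chinese Remainder Theorem with moduli $p_i^2$ so that $p_i$ divides $n+i$ exactly once, and conclude via the two-squares theorem. Your added remark about why the modulus must be $p_i^2$ rather than $p_i$ is a nice clarification but does not change the argument.
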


\begin{proof}
Take $t$ distinct primes $p_1, \dots , p_t$ all of which are  $\equiv 3
\bmod 4$ (they exist by the Dirichlet's Theorem on primes in an arithmetic progression).
By the Chinese Remainder Theorem, the system of
$t$ congruences 
\[x + i \equiv p_i \bmod {p_i}^2 
\quad \quad (1 \leq i \leq t) \] has a solution $s$.
 
Since
$s+i \equiv p_i \bmod {p_i}^2$, it is clear that $s+i$ is divisible
by $p_i$, but not by ${p_i}^2$. Since $p_i \equiv 3
\bmod 4$, it follows from Theorem \ref{sumofsquares.thm} that $s+i$ is not a sum of
two squares. This holds for $1 \leq i \leq t$.
\end{proof}

\begin{lemma}
\label{consec.lem}
Two consecutive integers, say $n$ and $n+1$, are both not expressible as a sum of three squares if and only if
$n = 4^a (8b+7) -1$, where $a \geq 2$ and $b \geq 0$.
\end{lemma}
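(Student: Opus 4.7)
The plan is to invoke the characterization in Theorem \ref{sumofsquares.thm}(2): an integer $m$ fails to be a sum of three squares exactly when $m = 4^c(8d+7)$ for some nonnegative integers $c,d$. So both $n$ and $n+1$ avoid being a sum of three squares iff each of them has this shape, and the task reduces to a purely arithmetic analysis of when two representations $n = 4^{c_1}(8d_1+7)$ and $n+1 = 4^{c_2}(8d_2+7)$ can coexist.

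First I would exploit the parity gap between $n$ and $n+1$. A number of the form $4^c(8d+7)$ is odd precisely when $c=0$ and even precisely when $c \geq 1$; since one of $n, n+1$ is odd and the other is even, one of the two representations must have $c=0$ and the other $c\geq 1$. Next I would rule out the case in which $n$ is the even one. If $n = 4^{c_1}(8d_1+7)$ with $c_1 \geq 1$, then $n \equiv 0 \pmod 4$, hence $n+1 \equiv 1 \pmod 4$; but $n+1 = 8d_2+7 \equiv 3 \pmod 4$, a contradiction. So necessarily $n$ is odd, $n = 8d_1+7$, and $n+1 = 4^a(8b+7)$ for some $a \geq 1$, $b \geq 0$.

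The main (though still easy) step is to pin down $a$ by comparing $2$-adic valuations. Writing $n+1 = (n-7)+8 = 8(d_1+1)$, we have $v_2(n+1) = 3 + v_2(d_1+1)$, while $v_2(4^a(8b+7)) = 2a$ because $8b+7$ is odd. Equating gives $2a = 3 + v_2(d_1+1)$, so $v_2(d_1+1) = 2a-3$ must be a nonnegative integer; this forces $a \geq 2$. This establishes the ``only if'' direction.

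For the converse, I would take $n = 4^a(8b+7)-1$ with $a \geq 2$ and $b \geq 0$ and verify both shapes directly: $n+1 = 4^a(8b+7)$ is already in the required form, and since $a \geq 2$ we have $16 \mid 4^a(8b+7)$, hence $n+1 \equiv 0 \pmod 8$, giving $n \equiv 7 \pmod 8$, i.e., $n = 8d_1+7$ with $d_1 = (n-7)/8 \geq 0$. By Theorem \ref{sumofsquares.thm}(2) neither $n$ nor $n+1$ is a sum of three squares, completing the equivalence. I do not expect any serious obstacle; the whole argument is a short case analysis once Legendre's theorem is in hand, the only mildly delicate point being the $2$-adic valuation computation that upgrades $a \geq 1$ to $a \geq 2$.
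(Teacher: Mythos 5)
Your proof is correct and follows essentially the same route as the paper: both reduce the statement to Legendre's three-square characterization and then settle the arithmetic by congruences modulo small powers of $2$ (the paper via the residues $0,4,7 \bmod 8$ forcing $n \equiv 7 \bmod 8$, you via parity, a mod-$4$ check and the $2$-adic valuation). Your version merely makes explicit the step the paper leaves implicit, namely why $n+1 \equiv 0 \bmod 8$ forces $a \geq 2$.
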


\begin{proof}
This is a consequence of Legendre's Three-square Theorem (Theorem \ref{sumofsquares.thm}).
If $n$ is not expressible as a sum of three squares, then $n \equiv 0,4 \text{ or } 7 \bmod 8$. Therefore, if $n$ and $n+1$ 
are both not expressible as a sum of three squares, then $n \equiv  7 \bmod 8$. 
It follows from Legendre's Three-square Theorem that $n$ and $n+1$ 
are both not expressible as a sum of three squares if and only if $n + 1 = 4^a (8b+7)$ where $a\geq 2$ and $b \geq 0$.
\end{proof}

\subsection{Our Contributions}

Section \ref{MGRexistence.sec} gives existence results for modular Golomb rulers. We summarize exhaustive searches that we have carried out for all $k\leq 11$, and we present a general existence result that holds for all $k \geq 3$. 
Section \ref{nonexistenceMGR.sec} proves nonexistence results for various infinite classes of modular Golomb rulers.
Many of our new results are based on counting even and odd differences and then applying some classical results from number theory which establish which integers can be expressed as a sum of a two or three squares. 
Section \ref{nonexistenceOOC.sec} studies optical orthogonal codes and provides nonexistence results for certain optimal OOCs.
In Section \ref{other.sec}, we consider cyclic Steiner systems and relative difference families and we present additional nonexistence results using the techniques we have developed. Finally, Section \ref{summary.sec} is a brief summary.


\section{Existence Results for $(v,k)$-MGR}
\label{MGRexistence.sec}

In this section, we report the results of exhaustive searches for $(v,k)$-MGR with $k \leq 11$. 
We also prove a general existence result that holds for all integers $k \geq 3$. First, we discuss a few preliminary results..

Given a positive integer $k \geq 3$, define
\[ \mathsf{MGR}(k) = \{ v : \text{there exists a $(v,k)$-MGR} \} .\] 
We are interested in the set $\mathsf{MGR}(k)$. 
In particular, it is natural to try to determine the minimum integer in $\mathsf{MGR}(k)$ 
as well as the maximum integer not in $\mathsf{MGR}(k)$.

Another parameter of interest is the length of a  Golomb ruler. There has been considerable research done on finding the minimum length of a Golomb ruler of specified order $k$, which we denote by $L^*(k)$. In the modular case,
we will define $L_m^*(k)$ to be the minimum  $L$ such that there exists a $(v,k)$-MGR of length $L$ for some $v$.

The following basic lemma is well-known.

\begin{lemma} 
\label{double.lem}
Suppose there is a Golomb ruler of order $k$ and length $L$, and suppose $v \geq 2L+1$. Then there
is a $(v,k)$-MGR.
\end{lemma}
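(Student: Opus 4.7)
The plan is to take the given Golomb ruler, shift so that its minimum is $0$, and verify directly that when viewed inside $\zed_v$ it still has all differences distinct. Write the ruler as $0 = x_1 < x_2 < \cdots < x_k = L$ (we may translate without loss of generality, since translation preserves the Golomb property and the length). Since $L < v$, the points $x_1,\dots,x_k$ are already distinct residues mod $v$, so they form a candidate $k$-subset of $\zed_v$.

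Next I would analyze where the reduced differences live. For $i \neq j$ the integer difference $x_j - x_i$ is nonzero and satisfies $-L \leq x_j - x_i \leq L$. Reducing modulo $v$, a positive difference lies in $\{1, 2, \dots, L\}$, while a negative difference is congruent to $v + (x_j - x_i) \in \{v-L, v-L+1, \dots, v-1\}$. The hypothesis $v \geq 2L+1$ gives $v - L \geq L+1$, so these two intervals are disjoint subsets of $\zed_v$.

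Now I would conclude as follows. Suppose two differences coincide modulo $v$, say $x_j - x_i \equiv x_{j'} - x_{i'} \pmod{v}$ with $(i,j) \neq (i',j')$. By the disjointness just established, the two differences must have the same sign as integers; hence both integer differences lie in a single interval of length $L < v$, forcing $x_j - x_i = x_{j'} - x_{i'}$ as integers. This contradicts the defining property of the Golomb ruler. Therefore the $k$-set $\{x_1,\dots,x_k\}$ is a $(v,k)$-MGR.

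There is no serious obstacle here; the only care needed is the sign case-analysis that makes the bound $v \geq 2L+1$ tight (so that the ``positive'' and ``negative'' difference residues do not collide). This lemma is exactly what lets Golomb-ruler data be imported into the modular setting in the later existence constructions.
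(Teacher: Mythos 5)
Your proof is correct and follows essentially the same argument as the paper: reduce the ruler modulo $v$, observe that positive differences lie in $\{1,\dots,L\}$ and negative ones in $\{v-L,\dots,v-1\}$, and use $v \geq 2L+1$ to keep these ranges disjoint while the Golomb property handles collisions within a single range. No issues.
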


\begin{proof}
We have a Golomb ruler consisting of $k$ integers $0 = x_1 < x_2 < \cdots < x_k = L$.
Consider these as residues modulo $v$, where $v \geq 2L+1$. Clearly all the ``positive residues'' 
$x_j - x_i \bmod v$ ($i < j$) are nonzero and distinct, as are all the ``negative residues'' $x_j - x_i \bmod v$ ($j < i$).
The largest positive residue is $L$ and the smallest negative residue is
$v - L$. Since $v >2 L$, no positive residue is equal to a negative residue.
\end{proof}

The following is an immediate consequence of Lemma \ref{double.lem}. 

\begin{lemma}
For any positive integer $k \geq 2$, $L^*(k) = L_m^*(k)$.
\end{lemma}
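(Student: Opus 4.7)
The plan is to prove the two inequalities $L^*(k) \leq L_m^*(k)$ and $L_m^*(k) \leq L^*(k)$ separately, both of which follow easily from facts already in the excerpt.

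For the first direction, I would use the observation, stated earlier in the introduction, that every modular Golomb ruler is automatically a Golomb ruler (with the same point set, viewed as integers rather than residues). Suppose a $(v,k)$-MGR of length $L_m^*(k)$ exists, say with integer representatives $0 = x_1 < x_2 < \cdots < x_k \leq v-1$ achieving $x_k - x_1 = L_m^*(k)$. Then $\{x_1, \dots, x_k\}$ is a Golomb ruler of order $k$ and length $L_m^*(k)$, so $L^*(k) \leq L_m^*(k)$.

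For the reverse direction, I would invoke Lemma \ref{double.lem} directly. Take a Golomb ruler of order $k$ achieving the optimum length $L = L^*(k)$, and set $v = 2L+1$. The hypothesis $v \geq 2L+1$ of Lemma \ref{double.lem} holds with equality, so the Golomb ruler becomes a $(v,k)$-MGR, whose length is still $L = L^*(k)$. Therefore $L_m^*(k) \leq L^*(k)$, and combining the two inequalities gives the desired equality.

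There is essentially no obstacle: both directions are one-line arguments given the material set up immediately before the statement. The only mild subtlety is bookkeeping about what ``length'' means — namely, making sure one reads the length of the MGR produced by Lemma \ref{double.lem} as $x_k - x_1$ of the original integer ruler (not as some shortened modular distance), which is immediate from the construction in that lemma.
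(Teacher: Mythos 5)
Your proof is correct and is exactly the argument the paper intends: the paper presents the lemma as an immediate consequence of Lemma \ref{double.lem}, which gives your inequality $L_m^*(k) \leq L^*(k)$, while the reverse inequality follows from the observation that any MGR is a Golomb ruler of the same length. Nothing further is needed.
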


Given a positive integer $k \geq 3$, define
\[ \mathsf{MGR}(k) = \{ v : \text{there exists a $(v,k)$-MGR} \} .\]
We have performed exhaustive backtracking searches in order to determine the sets $\mathsf{MGR}(k)$ for 
$3 \leq k \leq 11$. For each value of $k$, once we have constructed a sufficient number of ``small'' $(v,k)$-MGR,
we can apply Lemma \ref{double.lem} to conclude that all $(v,k)$-MGR exist for larger values of $v$. To this end, when we compute all the $(v,k)$-MGR for given values of $v$ and $k$, we keep track of the ruler having the smallest possible length.
This facilitates the application of Lemma \ref{double.lem}

Our computational results are summarized as follows.

\begin{theorem}
\begin{enumerate}
\item $\mathsf{MGR}(3) = \{v : v\geq 7\}$.
\item $\mathsf{MGR}(4) = \{v : v\geq 13\}$.
\item $\mathsf{MGR}(5) = \{21\} \cup \{v : v\geq 23\}$.
\item $\mathsf{MGR}(6) = \{31\} \cup \{v : v\geq 35\}$.
\item $\mathsf{MGR}(7) = \{v : v\geq 48\}$.
\item $\mathsf{MGR}(8) = \{57\} \cup \{v : v\geq 63\}$.
\item $\mathsf{MGR}(9) = \{73,80\} \cup \{v : v\geq 85\}$.
\item $\mathsf{MGR}(10) = \{91\} \cup \{v : v\geq 107\}$.
\item $\mathsf{MGR}(11) = \{120,133\} \cup \{v : v\geq 135\}$.
\end{enumerate}
\end{theorem}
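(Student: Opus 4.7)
The plan is a two-phase argument combining exhaustive backtracking search on a bounded range of $v$ with Lemma \ref{double.lem} to handle the infinite tail. For each $k \in \{3,\ldots,11\}$, I would first determine (or recall from the literature) the minimum Golomb-ruler length $L^*(k)$. Any optimal ruler of order $k$ yields, via Lemma \ref{double.lem}, a $(v,k)$-MGR for every $v \geq 2L^*(k)+1$, so the eventual ``all $v \geq T_k$'' portion of each item is already handled as soon as $L^*(k)$ is known and one exhibits a ruler realising it.

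For the finite window $k^2-k+1 \leq v \leq 2L^*(k)$---the lower endpoint coming from the first theorem of the paper, which rules out every smaller $v$ automatically---I would run an exhaustive backtracking search that either produces a $(v,k)$-MGR or certifies that none exists. Standard prunings are essential for $k\in\{10,11\}$: fix $x_1=0$, generate the remaining $x_i$ in increasing order, backtrack the instant a difference repeats modulo $v$, and factor out the reflection $x\mapsto v-x$ to halve the tree. During the search one also records, for each feasible $v$, the smallest length attained; this bookkeeping can tighten the threshold from which Lemma \ref{double.lem} guarantees existence of a $(v,k)$-MGR for all further $v$, potentially improving on the naive cutoff $2L^*(k)+1$ and explaining why thresholds such as $T_{11}=135$ can be smaller than one would get from the optimal Golomb ruler alone.

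The outputs of the two phases must then be glued together consistently: the finite set of ``exceptional'' values listed before the threshold in each item (for instance $\{73,80\}$ for $k=9$ and $\{120,133\}$ for $k=11$) should be exactly the set of $v$ inside the search window for which the backtracking returned a certificate, and every $v \geq T_k$ should either lie inside the search window (so was verified by search) or satisfy $v \geq 2L^*(k)+1$ (so is handled by Lemma \ref{double.lem}). The main obstacle is purely computational: for $k=11$ the search tree is large, and the nonexistence claims---for instance that no $(v,11)$-MGR exists for $v\in\{111,\ldots,119,121,\ldots,132,134\}$---require exhausting the tree rather than finding a single witness, so correctness ultimately rests on the reliability of the search program. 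Tabulating, for each $v$ in the window, both a best witness (if any) and its length would make the write-up self-checking and, via Lemma \ref{double.lem}, provides the evidence for the tail.
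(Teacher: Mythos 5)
Your proposal is correct and is essentially the paper's own proof: an exhaustive backtracking search over the finite window of $v$ for each $3\le k\le 11$, recording witnesses and nonexistence certificates, with Lemma \ref{double.lem} applied to a shortest ruler found to cover all larger $v$ (this is exactly what Table \ref{MGR.tab} documents). One small clarification: since $L_m^*(k)=L^*(k)$, the cutoff $2L^*(k)+1$ from Lemma \ref{double.lem} cannot itself be tightened by the length bookkeeping; thresholds such as $T_{11}=135<145$ arise from the explicit rulers found by search for $135\le v\le 144$, which your search window already covers.
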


\begin{proof}
Proof details are in  Table \ref{MGR.tab}.
\end{proof}

\begin{longtable}{c|r|l}
\caption{$(v,k)$-modular Golomb rulers for $3 \leq k \leq 11$}\\
$v$ & $k$ & \multicolumn{1}{|c}{ruler} \\ \hline
\endfirsthead
\caption{$(v,k)$-modular Golomb rulers for $3 \leq k \leq 11$ (cont.)}\\
$v$ & $k$ & \multicolumn{1}{|c}{ruler} \\ \hline
\endhead
\label{MGR.tab}
$v = 7$ & $3$ & $0 ,1 ,3$  \\ 
$v\geq 8$  & $3$ &  {Lemma \ref{double.lem}}, $v=7$, $L = 3$  \\ \hline 
$v = 13$ & $4$	&	$0 , 1 , 4 , 6$		\\ 
$v\geq 14$ & $4$ &  {Lemma \ref{double.lem}}, $v=13$, $L = 6$ 	 \\ \hline 
$v = 21$ & $5$	&	 $0 ,  2 ,  7 ,  8 , 11$		\\
$v = 22$ & $5$ & {does not exist} \\
$v\geq 23$ & $5$ & {Lemma \ref{double.lem}}, $v=21$, $L = 11$ \\ \hline 
$v = 31$ & $6$  &	  $0  , 1  , 4 , 10 , 12 , 17$ 		\\ 
$32 \leq v \leq 34$ & $6$ & {does not exist} \\
$v \geq 35$ & $6$ & {Lemma \ref{double.lem}}, $v=31$, $L = 17$ \\ \hline 
$43 \leq v \leq 47$ & $7$ & {does not exist} \\
$v = 48$ & $7$	&	$0  , 5  , 7 , 18 , 19 , 22  ,28$	\\
$v = 49$ & $7$	&	$0  , 2  , 3 , 10 , 16 , 21  ,25$ 	\\
$v = 50$ & $7$	&	$0  , 1  , 5 ,  7 , 15 , 18  ,27$ \\ 
$v \geq 51$ & $7$ & {Lemma \ref{double.lem}}, $v=49$, $L = 25$ \\ \hline 
$v = 57,64,68$ & $8$	&	$0  , 4  , 5 , 17 , 19 , 25 , 28 , 35$ 	\\
$58 \leq v \leq 62$ & $8$ & {does not exist} \\
$v = 63,67$ & $8$	&	$0  , 1  , 8 , 20 , 22 , 25 , 31 , 35$\\
$v = 65$ & $8$	&	$0 ,  2  ,10 , 11 , 16 , 28 , 31 , 35$\\
$v = 66$ & $8$	&	$0  , 2 , 10 , 21 , 24 , 25 , 30 , 37$\\
$v = 69$ & $8$	&	$0  , 1  , 4 ,  9 , 15 , 22 , 32 , 34$ \\
$v \geq 70$ & $8$ & {Lemma \ref{double.lem}}, $v=69$, $L = 34$  \\ \hline 
$v = 73$ & $9$	&	$0 ,  2 , 10 , 24 , 25 , 29 , 36 , 42 , 45$\\
$74 \leq v \leq 79$ & $9$	&	{does not exist} \\
$v = 80$ & $9$	&	$0  , 1 , 12 , 16 , 18 , 25 , 39 , 44 , 47$\\
$81 \leq v \leq 84$ 	& $9$	&	{does not exist} \\
$v = 85$ & $9$	&	$0 ,  1 ,  7  ,12 , 21  ,29 , 31 , 44 , 47$\\
$v = 86,88$ & $9$	&	$0 ,  2 ,  5  ,13 , 17 , 31 , 37 , 38 , 47$\\
$v = 87$ & $9$	&	$0 ,  1  , 4  ,13  ,24 , 30 , 38 , 40 , 45$\\
$v = 89$ & $9$ 	&   $0  , 1 ,  5 , 12 , 25  ,27 , 35 , 41 , 44$\\
$v \geq 90$ & $9$ & {Lemma \ref{double.lem}}, $v=89$, $L = 44$ \\ \hline
$v = 91$ & $10$	& $0  , 1 ,  6 , 10 , 23 , 26 , 34 , 41 , 53 , 55$	\\
$92 \leq v \leq 106$ & 10	&	{does not exist} \\
$v = 107$ & $10$	&  $0  , 2  ,15 , 21 , 22 , 32 , 46 , 50 , 55 , 58$\\
$v = 108$ & $10$	&  	 $0  , 2  , 8 , 27 , 32 , 36 , 39 , 49,  50 , 65$\\
$v = 109$ & $10$	&  $0  , 4 , 11 , 16  ,25 , 35 , 38 , 53 , 55 , 61$	\\
$v = 110$ & $10$	&  $0  , 3  ,14 , 16 , 36 , 37 , 42 , 46 , 54 , 61$	\\
$v \geq 111$ & $10$ & {Lemma \ref{double.lem}}, $v=91$, $L = 55$  \\\hline 
$111 \leq v  \leq 119$ & $11$	&	{does not exist} \\
$ v = 120$ & $11$	&   $0  , 1  , 4   ,9,  23 , 30 , 41 , 43 , 58 , 68 , 74$\\
$121 \leq v \leq 132$ & $11$	&	{does not exist} \\
$ v = 133$ & $11$	&  $0 ,  1  , 9  ,19 , 24 , 31  ,52 , 56 , 58 , 69 , 72$ \\
$v = 134 $ & $11$	&	{does not exist} \\
$ v = 135$ & $11$	&  $0  , 5  , 7  ,11 , 31 , 41 , 49 , 50 , 63 , 66,  78$ \\
$ v = 136$ & $11$	&  $0 ,  2 , 11 , 27 , 37 , 42 , 45 , 59 , 65 , 66 ,78$ \\
$ v = 137$ & $11$	&  $0  , 1  ,16  ,21 , 24 , 33 , 43 , 61 , 68 , 72 , 74$ \\
$ v = 138$ & $11$	&  $0  , 4 ,  5 , 23 , 25 , 37 , 52 , 59 , 65 , 68 , 76 $\\
$ v = 139$ & $11$	&  $0 ,  1 ,  3  ,11  ,25  ,41 , 45 , 54 , 60 , 72 , 77$ \\
$ v = 140$ & $11$	&  $0  , 4 , 10  ,24 , 25 , 27 , 36  ,43 , 65 , 73 , 78$ \\
$ v = 141$ & $11$	&  $0  , 2 ,  3  , 7 , 20 , 29 , 41 , 52 , 60 , 66 , 76$\\
$ v = 142$ & $11$	&   $0  , 1 , 13 , 16 , 22 , 33 , 47 , 51 , 70 , 75 , 77$\\
$ v = 143,144$ & $11$	&   $0  , 3  , 7 , 22  ,27 , 43 , 56 , 57 , 66 , 68  ,74$\\
$v \geq 145$ & $11$ & {Lemma \ref{double.lem}}, $v=133$, $L = 72$ \\ \hline 
\end{longtable}

\begin{remark}
Existence of a $(110,10)$-MGR also follows from Theorem \ref{ruzsa.thm}, and 
existence of a $(48,7)$-MGR and a $(120,11)$-MGR  follow  from Theorem \ref{bose.thm}.
\end{remark}

The rulers that are presented in Table \ref{MGR.tab} provide upper bounds on $L^*_m(k)$ for $3 \leq k \leq 11$.
However, it turns out that all these values are in fact exact. This is because the exact values of $L^*(k)$ are known for small $k$ (see, for example, \cite[Table 2.2]{Dim}) and they match the minimum lengths of the modular Golomb rulers that we have recorded in Table \ref{MGR.tab}.  Thus we have the following result.

\begin{theorem}
$L^*_m(3) = 3$; $L^*_m(4) = 6$; $L^*_m(5) = 11$; $L^*_m(6) = 17$; $L^*_m(7) = 25$; $L^*_m(8) = 34$; 
$L^*_m(9) = 44$; $L^*_m(10) = 55$; and $L^*_m(11) = 72$.
\end{theorem}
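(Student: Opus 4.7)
The plan is to combine two ingredients: the upper bounds on $L^*_m(k)$ supplied by the explicit rulers listed in Table~\ref{MGR.tab}, and matching lower bounds taken from the well-studied classical quantity $L^*(k)$.

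First I would observe that each entry in Table~\ref{MGR.tab} whose length equals the claimed value of $L^*_m(k)$ immediately gives an upper bound of that form. For instance, the $(7,3)$-MGR $\{0,1,3\}$ has length $3$, the $(13,4)$-MGR $\{0,1,4,6\}$ has length $6$, the $(21,5)$-MGR $\{0,2,7,8,11\}$ has length $11$, and so on through the entry for $k=11$ with length $72$. Thus $L^*_m(k)$ is at most each of the claimed numbers.

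For the matching lower bounds I would invoke Lemma~\ref{double.lem}, or rather its immediate consequence that $L^*(k) = L^*_m(k)$ for every $k\geq 2$. (One direction is trivial because any MGR is in particular a Golomb ruler; the other direction follows from Lemma~\ref{double.lem} by taking any length-minimising Golomb ruler and embedding it modulo any sufficiently large $v$.) Hence it suffices to show $L^*(k)$ equals each claimed value, and this is exactly the content of the classical table of optimal Golomb-ruler lengths for small order, as recorded for example in \cite[Table 2.2]{Dim}: the values $3,6,11,17,25,34,44,55,72$ for $k=3,\dots,11$ are there and are known to be exact.

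Combining these two directions, $L^*(k) \leq L^*_m(k) \leq \text{(length in Table \ref{MGR.tab})} = L^*(k)$, forcing equality throughout and yielding the claimed numbers. There is no real obstacle here beyond a careful case-by-case verification that, for each $k \in \{3,\ldots,11\}$, the smallest length recorded in Table~\ref{MGR.tab} does indeed coincide with the tabulated optimal Golomb-ruler length; this is a finite check and nothing more.
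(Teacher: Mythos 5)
Your proposal is correct and follows essentially the same route as the paper: upper bounds from the explicit rulers of Table~\ref{MGR.tab}, combined with the identity $L^*(k)=L^*_m(k)$ (a consequence of Lemma~\ref{double.lem}) and the known exact values of $L^*(k)$ from \cite[Table 2.2]{Dim}. Nothing further is needed.
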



Now we state and prove two general existence results that hold for all $k \geq 3$. 

\begin{theorem}
\label{exist1.thm}
For any integer $k \geq 3$, there is a $(v,k)$-MGR for some integer $v \leq 3k^2/2$.
\end{theorem}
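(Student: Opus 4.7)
The plan is to combine a simple ``subset trick'' with Bose's theorem (Theorem~\ref{bose.thm}) and a classical refinement of Bertrand's postulate.

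The key preliminary observation is that if $X$ is a $(v,k')$-MGR with $k' \geq k$, then any $k$-element subset of $X$ is automatically a $(v,k)$-MGR: its modular differences form a subset of those of $X$, which are already pairwise distinct and nonzero, and its elements still lie in $\{0, 1, \dots, v-1\}$. Combining this with Bose's theorem reduces the problem to finding, for each $k \geq 3$, a prime power $q$ satisfying
\[
k \leq q \qquad\text{and}\qquad q^2 - 1 \leq \frac{3k^2}{2}.
\]
Such a $q$ yields a $(q^2-1, q)$-MGR, and restricting to any $k$-subset produces the desired $(q^2-1, k)$-MGR with $v \leq 3k^2/2$.

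For $k \geq 25$ I would invoke Nagura's theorem, which guarantees a prime $q$ with $k \leq q \leq 6k/5$. Then
\[
q^2 - 1 \leq \frac{36 k^2}{25} - 1 < \frac{3k^2}{2},
\]
so the required prime power exists. For the finite range $3 \leq k \leq 24$ one exhibits a suitable prime power in $[k, \sqrt{3k^2/2+1}]$ by direct inspection; for $k \leq 11$ one can simply cite Table~\ref{MGR.tab}, which already furnishes $(v,k)$-MGRs with $v$ comfortably below $3k^2/2$, and for $12 \leq k \leq 24$ the prime powers $13, 16, 17, 19, 23, 25$ collectively handle every remaining value of $k$.

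The main obstacle is essentially quantitative: Bertrand's postulate alone, which yields only $q < 2k$, would give $v \sim 4k^2$ and is therefore insufficient; one genuinely needs a ratio between consecutive primes strictly smaller than $\sqrt{3/2} \approx 1.2247$, and Nagura's $6/5 = 1.2$ refinement supplies exactly that. Apart from this point, the argument is essentially a bookkeeping exercise built on top of Theorem~\ref{bose.thm} and the subset observation.
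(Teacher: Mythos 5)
Your proposal is correct and follows essentially the same route as the paper: handle $3 \leq k \leq 11$ by the table, the middle range $12 \leq k \leq 24$ by explicit prime powers, and $k \geq 25$ by Nagura's $6/5$ prime-gap bound, all combined with the observation that removing elements from an MGR leaves an MGR. The only substantive difference is that you start from Bose's $(q^2-1,q)$-MGR (Theorem \ref{bose.thm}) with a prime power $q \geq k$, whereas the paper deletes points from a Singer $(p^2+p+1,p+1,1)$-difference set with $p \geq k-1$ (Corollary \ref{Singer.cor}); both yield a modulus of order roughly $k^2$, and in either case the inequality $(6k/5)^2 < 3k^2/2$ closes the large-$k$ argument, so your variant is a perfectly valid (and numerically slightly cleaner) instantiation of the same strategy.
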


\begin{proof}
For $3 \leq k\leq 11$, we refer to the results in Table \ref{MGR.tab}. Indeed, for these values of $k$, there is a 
$(v,k)$-MGR for some integer $v \leq k^2-1$.

For $12 \leq k \leq 24$, we use Corollary \ref{Singer.cor}. 
There is a $(p^2+p+1,p+1,1)$-difference set in $\zed_{p^2+p+1}$ for $p = 11,13,16,17,19$ and $23$.
If we delete $\delta = p+1 - k$ elements from such a difference set, we obtain
a $(p^2+p+1,k)$-MGR. For $k =12$, we have $p= 11$ and $\delta = 0$;
for $k =13,14$, we have $p= 13$ and $\delta \leq 1$; 
for $15 \leq k \leq 17$, we have $p= 16$ and $\delta \leq 2$; 
for $k =18$, we have $p= 17$ and $\delta = 0$; 
for $k =19,20$, we have $p= 19$ and $\delta \leq 1$; and 
for $21 \leq k \leq 24$, we have $p= 23$ and $\delta \leq 3$.
So, for $12 \leq k \leq 24$, there is a $(v,k)$-MGR for some integer 
\begin{align*} v &\leq (k + \delta - 1)^2 + (k + \delta - 1) + 1 \\
&\leq (k+2)^2 +k + 3 \\
&= k^2 +5k + 7.
\end{align*}
It is easy to verify that $k^2 +5k + 7 \leq  3k^2/2$ if $k \geq 12$.

Finally, suppose $k \geq 25$. Let $p$ be the smallest prime such that $p \geq k - 1$.
By a result of Nagura \cite{Nagura}, we have $p \leq 6(k-1)/5 < 6k/5$.
From Corollary \ref{Singer.cor}, there exists a $(p^2+p+1,p+1,1)$-difference set in $\zed_{p^2+p+1}$. Delete 
$p+1 - k$ elements from this difference set to obtain
a $(p^2+p+1,k)$-MGR. 
We have 
\begin{align*}
p^2+p+1 &< \left( \frac{6k}{5} \right)^2 + \frac{6k}{5} + 1\\
&< \frac{3k^2}{2},  
\end{align*}
where the last inequality holds for $k \geq 21$.
\end{proof}

\begin{theorem}
\label{exist2.thm}
For any integer $k \geq 3$ and any integer $v \geq 3k^2 - 1$, there is a $(v,k)$-MGR.
\end{theorem}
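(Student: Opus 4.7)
The plan is to combine Theorem \ref{exist1.thm} with Lemma \ref{double.lem}, essentially bootstrapping a single small MGR into MGRs of every sufficiently large order.

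First I would invoke Theorem \ref{exist1.thm} to obtain a $(v_0, k)$-MGR for some integer $v_0 \leq 3k^2/2$. Since every element of such a ruler lies in $\{0, 1, \dots, v_0 - 1\}$, its length $L$ satisfies $L \leq v_0 - 1$. Next, I would use the elementary fact (already noted in the excerpt) that every modular Golomb ruler is in particular a Golomb ruler of the same length, so we have a Golomb ruler of order $k$ and length $L \leq v_0 - 1$.

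Then I would apply Lemma \ref{double.lem}: whenever $v \geq 2L + 1$, this Golomb ruler (reduced modulo $v$) is a $(v, k)$-MGR. It therefore suffices to check that $v \geq 3k^2 - 1$ implies $v \geq 2L + 1$. Since
\[
2L + 1 \;\leq\; 2(v_0 - 1) + 1 \;=\; 2v_0 - 1 \;\leq\; 2 \cdot \tfrac{3k^2}{2} - 1 \;=\; 3k^2 - 1,
\]
the hypothesis $v \geq 3k^2 - 1$ is already strong enough, and the result follows.

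There is essentially no obstacle here: the entire argument is a two-line deduction from Theorem \ref{exist1.thm} and Lemma \ref{double.lem}. The only minor point to be careful about is that $v_0$ is an integer, so $v_0 \leq \lfloor 3k^2/2 \rfloor$, but this only strengthens the bound and does not affect the conclusion. The bound $3k^2 - 1$ in the statement is precisely twice the $3k^2/2$ from Theorem \ref{exist1.thm}, minus one, which is exactly what the doubling lemma requires.
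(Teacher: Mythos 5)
Your proof is correct and follows essentially the same route as the paper: take the $(v_0,k)$-MGR with $v_0 \leq 3k^2/2$ from Theorem \ref{exist1.thm}, note its length is at most $v_0 - 1 \leq 3k^2/2 - 1$, and apply Lemma \ref{double.lem} to cover all $v \geq 3k^2 - 1$. The extra care you take (observing that an MGR is in particular a Golomb ruler, and that $v_0$ being an integer only helps) is fine but not needed beyond what the paper's two-line argument already contains.
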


\begin{proof}
From Theorem \ref{exist1.thm}, there  exists a $(v,k)$-MGR for some integer $v \leq 3k^2/2$.
This ruler has length $L \leq  3k^2/2 - 1$.  Applying Theorem \ref{double.lem},
there
is a $(v,k)$-MGR for all $v \geq 2(3k^2/2 - 1)+1 = 3k^2 - 1$. 
\end{proof}

\begin{remark} Of course there are stronger results known on gaps between consecutive primes that hold for larger integers.
For example,  it was shown by Dusart \cite{Dusart} that, 
if $k \geq 89693$, then there is at least one prime $p$ such that 
\[ k < p \leq \left( 1 + \frac{1}{\ln^3 k} \right) k .\] So improved versions of Theorems \ref{exist1.thm} and \ref{exist2.thm}
could be proven that hold for sufficiently large values of $k$.
\end{remark}

\section{Nonexistence Results for $(v,k)$-MGR}
\label{nonexistenceMGR.sec}

We present several nonexistence results for infinite classes of modular Golomb rulers in this section.

\subsection{$(k^2-k+2,k)$-MGR}

We have noted that $v \geq k^2-k+1$ if a $(v,k)$-MGR exists, and the  $(k^2-k+1,k)$-MGR are equivalent to 
cyclic difference sets with $\lambda = 1$. 
There has been considerable study of these difference sets and various nonexistence results are known.
We do not discuss this case further here, but we refer to \cite[\S 8]{Jung-survey} for a good summary of known results.

The next case is $v =  k^2-k+2$.
First, we note that there are two small examples of $(k^2-k+2,k)$-MGR, namely, an $(8,3)$-MGR and a $(14,4)$-MGR.
These are found in Table \ref{MGR.tab}. In fact, these are the only examples that are known to exist. We now discuss some nonexistence results for $(k^2-k+2,k)$-MGR.

We next observe that $(k^2-k+2,k)$-MGR are equivalent to certain relative difference sets
in the cyclic group $\zed_{k^2-k+2}$. The proof of this easy result is left to the reader.


\begin{theorem}
\label{equiv.thm}
A $(k^2-k+2,k)$-MGR is equivalent to a $(\zed_{k^2-k+2}, H, k, 1)$-relative difference set, 
where $H$ is the unique subgroup of order $2$ in $\zed_{k^2-k+2}$, i.e., $H = \{0,(k^2-k+2)/2\}$.
\end{theorem}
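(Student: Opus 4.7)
First, I would observe that $v = k^2-k+2 = k(k-1)+2$ is always even, since $k(k-1)$ is a product of two consecutive integers; hence $v/2 \in \zed_v$ is a genuine element and $H = \{0, v/2\}$ is indeed the unique subgroup of order $2$ in $\zed_v$, so the statement of the theorem is well-posed. The strategy is then to verify both directions of the equivalence by a short counting/parity argument, with all the content sitting in the forward direction.

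Starting from a $(v,k)$-MGR $X = \{x_1,\dots,x_k\}$, the $k(k-1)$ ordered differences $x_j - x_i$ ($i \neq j$) are by hypothesis pairwise distinct, and they are automatically nonzero since $0 \le x_i < x_j \le v-1$. They therefore form a set $D \subseteq \zed_v \setminus \{0\}$ of size $k(k-1) = v-2$, so exactly one nonzero residue of $\zed_v$ is missed. The key step is to identify that missed residue as $v/2$ by exploiting the symmetry $D = -D$ (which holds because $-(x_j - x_i) = x_i - x_j \in D$). This involution decomposes $D$ into pairs $\{d,-d\}$ with $d \neq -d$, together with any elements fixed by negation. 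Since $v$ is even, the only nonzero self-inverse element of $\zed_v$ is $v/2$. Thus if $v/2$ were in $D$, then $|D|$ would equal $1 + 2m$ for some integer $m$ and be odd; but $|D| = k(k-1)$ is even, a contradiction. Hence $v/2 \notin D$, so $D = \zed_v \setminus H$, which is exactly the $(\zed_v, H, k, 1)$-relative difference set condition.

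The converse is immediate: a $(\zed_v, H, k, 1)$-relative difference set is by definition a $k$-subset of $\zed_v$ whose $k(k-1)$ differences cover $\zed_v \setminus H$ each exactly once, and in particular these differences are all distinct and all nonzero, which is precisely the MGR condition.

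I do not anticipate any genuine obstacle: the whole proof turns on the parity of $k(k-1)$ combined with the uniqueness of the nonzero self-inverse element $v/2$ of $\zed_v$. The only point requiring a little care is the clean bookkeeping — noting that $v - 1 - |D| = 1$ pins down a single missed residue, which the involution argument then forces to be $v/2$.
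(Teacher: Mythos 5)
Your proof is correct, and it is essentially the argument the paper has in mind (the paper leaves this ``easy result'' to the reader, but the same symmetry idea appears in its Lemma \ref{even-odd.lem}): the $k(k-1)=v-2$ distinct nonzero differences miss exactly one residue, and the symmetry $D=-D$ forces that residue to be $v/2$. The only stylistic remark is that your parity count can be replaced by the more direct observation used in Lemma \ref{even-odd.lem}: if $x-y=v/2$ then also $y-x=v/2$, so $v/2$ would occur twice as a difference, contradicting distinctness; otherwise the write-up, including the routine converse, is complete.
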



It is well-known that relative difference sets give rise to certain square divisible designs, which we define now.
A \emph{$(w,u,k,\lambda_1,\lambda_2)$-divisible design} is a set system (actually, a type of group-divisible design) on $v = uw$ points and having blocks of size $k$, such that the following conditions are satisfied:
\begin{enumerate}
\item
the points are partitioned into $u$ groups of size $w$,
\item two points in the same group occur together in exactly $\lambda_1$ blocks, and
\item two points in different groups occur together in exactly $\lambda_2$ blocks.
\end{enumerate}
If the number of blocks is the same as the number of points, then we have a \emph{square divisible design}.

The following result is a consequence of Theorem \ref{equiv.thm}, since a square divisible design is obtained by developing a relative difference set through the relevant cyclic group.

\begin{theorem}
\label{divisible.thm}
If there exists a  $(k^2-k+2,k)$-MGR, then there exists a square divisible design with parameters 
$w=2$, $u = (k^2 - k +2)/2$, $\lambda_1 = 0$ and $\lambda_2 = 1$.
\end{theorem}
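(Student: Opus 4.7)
The plan is to combine Theorem~\ref{equiv.thm} with the standard "develop a relative difference set through its group" construction. Set $v = k^2-k+2$ and let $B \subset \mathbb{Z}_v$ be a $(\mathbb{Z}_v,H,k,1)$-relative difference set furnished by the hypothesized MGR, where $H = \{0, v/2\}$.

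I would first define the design explicitly. Take the point set to be $\mathbb{Z}_v$, partitioned into $u = v/2$ groups given by the cosets of $H$, namely $G_j = \{j, j+v/2\}$ for $0 \le j < v/2$; each group has size $w = 2$, and $uw = v$ as required. Take as blocks the $v$ translates $B + i$ for $i \in \mathbb{Z}_v$. Since there are $v$ blocks on $v$ points, the design will automatically be square once shown to be a divisible design.

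The verification of the divisibility conditions then reduces to unpacking the defining property of a relative difference set. For any two points $x \neq y$, a block $B+i$ contains both of them iff $\{x-i, y-i\} \subset B$, which happens iff the ordered pair $(x-y)$ appears as an ordered difference $b-b'$ with $b,b' \in B$. By the RDS property, the multiset of nonzero differences from $B$ covers each element of $\mathbb{Z}_v \setminus H$ exactly once and no element of $H \setminus \{0\} = \{v/2\}$ at all. Therefore: if $x,y \in G_j$ (same group), then $x - y = \pm v/2 \in H$, so no block contains them both, giving $\lambda_1 = 0$; if $x,y$ lie in different groups, then $x-y \notin H$, so exactly one block contains them both, giving $\lambda_2 = 1$.

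There is no real obstacle: the argument is entirely a routine translation between the relative-difference-set language of Theorem~\ref{equiv.thm} and the divisible-design language, and the only thing to be careful about is tracking that $H \setminus \{0\}$ is precisely the set of ``forbidden'' differences, which matches exactly the pairs of points lying in a common group.
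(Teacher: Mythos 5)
Your proposal is correct and takes essentially the same route as the paper, which simply observes that the design is obtained by developing the relative difference set of Theorem~\ref{equiv.thm} through the cyclic group; you have merely written out that development (cosets of $H$ as groups, translates $B+i$ as blocks) and the standard difference-counting check of $\lambda_1=0$, $\lambda_2=1$. The only detail left implicit is that the $v$ translates $B+i$ are pairwise distinct, which is immediate because $B+d=B$ for some $d\neq 0$ would force the difference $d$ to occur more than once, contradicting $\lambda=1$.
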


We will make use of some results due to Bose and Connor \cite{BC}, as stated in \cite[Proposition 1.8]{Jung}.

\begin{theorem}[Bose and Connor]
\label{BC.thm}
Suppose there exists a square divisible design with parameters $w$, $u$, $k$, $\lambda_1$ and $\lambda_2$.
Denote $v = uw$.
Then the following hold.
\begin{enumerate}
\item If $u$ is even, then $k^2 - \lambda_2 v$ is a perfect square. If furthermore $u \equiv 2 \bmod 4$, then
$k - \lambda_1$ is the sum of two squares.
\item If $u$ is odd and $w$ is even, then $k - \lambda_1$ is a perfect square and the equation 
\[ (k^2 -  \lambda_2v)x^2 + (-1)^{u(u-1)/2}  \lambda_2 w y^2 = z^2 \] has a nontrivial solution in integers $x$, $y$ and $z$.
\end{enumerate}
\end{theorem}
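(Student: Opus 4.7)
The plan is to reduce both parts of the theorem to the classical Bruck--Ryser--Chowla style rational-congruence argument applied to the point-block incidence matrix of the square divisible design. Writing $N$ for this $v \times v$ $(0,1)$-matrix (with $v=uw$) and separating pairs of points according to whether they lie in the same group gives
\[ NN^T \;=\; (k-\lambda_1) I_v \;+\; (\lambda_1-\lambda_2)(I_u \otimes J_w) \;+\; \lambda_2\, J_v. \]
Decomposing $\mathbb{R}^v$ into the span of $\mathbf{1}$, the space of group-constant zero-sum vectors, and the space of vectors summing to zero on each group, one reads off the eigenvalues $k^2$ (multiplicity $1$), $\theta_1 := k^2 - \lambda_2 v$ (multiplicity $u-1$), and $\theta_2 := k-\lambda_1$ (multiplicity $v-u = u(w-1)$); the identity $k(k-1)=(w-1)\lambda_1+(v-w)\lambda_2$ is needed to simplify the group-constant eigenvalue to the clean form $k^2-\lambda_2 v$.

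Since $N$ has integer entries, its determinant is an integer, so
\[\det(N)^2 \;=\; \det(NN^T) \;=\; k^2\, \theta_1^{\,u-1}\, \theta_2^{\,v-u}\]
must be a perfect square, and the ``perfect square'' conclusions in the theorem fall out by parity. When $u$ is even, $v-u$ is even and $\theta_2^{\,v-u}$ is automatically a square, so $u-1$ being odd forces $\theta_1 = k^2-\lambda_2 v$ to be a perfect square. When $u$ is odd and $w$ is even, $u-1$ is even and $v-u=u(w-1)$ is odd, and the same parity argument now forces $\theta_2 = k-\lambda_1$ to be a perfect square.

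For the finer conditions I would regard $NN^T$ as exhibiting a rational congruence of the diagonal quadratic form $\operatorname{diag}(k^2,\, \theta_1 I_{u-1},\, \theta_2 I_{v-u})$ to the standard form $I_v$. By the Hasse--Minkowski theorem such a rational congruence exists if and only if it exists over every completion of $\mathbb{Q}$, and translating the local conditions at the odd primes dividing $\theta_1\theta_2$ and at the prime $2$ into a product of Hilbert symbols yields precisely the two assertions of the theorem: $\theta_2 = k-\lambda_1$ is a sum of two rational squares when $u \equiv 2 \bmod 4$ (and hence, by Fermat's criterion recorded in Theorem~\ref{sumofsquares.thm} together with a standard $2$-adic descent, a sum of two integer squares); whereas when $u$ is odd and $w$ is even, the analogous calculation asserts that the ternary form $\theta_1 x^2 + (-1)^{u(u-1)/2}\lambda_2 w\, y^2 - z^2$ is isotropic over every $\mathbb{Q}_p$ and over $\mathbb{R}$, which by Meyer's theorem is equivalent to a nontrivial integer zero.

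The main obstacle is the Hilbert-symbol bookkeeping in this last step: one has to track how the sign $(-1)^{u(u-1)/2}$ arises from the Hasse invariant of the diagonal form built from $u$ copies of $\theta_1$ (this is where the residue of $u$ modulo $4$ enters), and verify that the contributions at $2$ and at the odd primes dividing $k-\lambda_1$ combine to the clean statements quoted. For that detailed local calculation I would follow Bose and Connor~\cite{BC} rather than try to redo it from scratch.
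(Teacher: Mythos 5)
The paper does not prove this statement at all: it is imported verbatim as a known theorem of Bose and Connor \cite{BC}, in the form given by Jungnickel \cite[Proposition 1.8]{Jung}, so there is no internal proof to compare against. Your sketch follows the classical route, and the parts you actually carry out are correct: the identity $NN^T=(k-\lambda_1)I_v+(\lambda_1-\lambda_2)(I_u\otimes J_w)+\lambda_2 J_v$ is right, the three eigenvalues $k^2$, $k^2-\lambda_2 v$, $k-\lambda_1$ with multiplicities $1$, $u-1$, $u(w-1)$ check out (using $k(k-1)=\lambda_1(w-1)+\lambda_2(v-w)$, valid since $r=k$ for a square divisible design), and the determinant parity argument cleanly yields the two ``perfect square'' conclusions. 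The genuinely hard content of Bose--Connor, however, is exactly the two finer clauses --- that $k-\lambda_1$ is a sum of two squares when $u\equiv 2\bmod 4$, and that the ternary equation $(k^2-\lambda_2 v)x^2+(-1)^{u(u-1)/2}\lambda_2 w\,y^2=z^2$ has a nontrivial solution when $u$ is odd and $w$ is even --- and for these you only gesture at a Hasse--Minkowski/Hilbert-symbol computation and explicitly defer the bookkeeping (in particular the origin of the sign $(-1)^{u(u-1)/2}$) to \cite{BC}. So as a self-contained proof your proposal is incomplete precisely where the theorem is deep; as a justified citation-plus-outline it is acceptable and in fact more than the paper itself offers. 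One small slip: isotropy of a ternary form over all completions implies a rational (hence, by clearing denominators, integral) zero by the Hasse--Minkowski theorem or Legendre's theorem on ternary forms, not by Meyer's theorem, which concerns indefinite forms in five or more variables.
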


We can use Theorem \ref{BC.thm} to obtain necessary conditions for the existence of
$(k^2-k+2,k)$-MGR.

\begin{corollary}
\label{BC.cor}
Suppose there exists a $(k^2-k+2,k)$-MGR.
Then the following hold.
\begin{enumerate}
\item $k \not\equiv 7 \bmod 8$.
\item If $k \equiv 2\bmod 8$, then $k-2$ is a perfect square and $k$ is the sum of two squares.
\item If $k \equiv 3, 6\bmod 8$, then $k-2$ is a perfect square. 
\item If $k \equiv 0,1\bmod 8$, then $k$ is a perfect square and the equation 
\[ (k-2)x^2 + 2y^2 = z^2 \] has a nontrivial solution in integers $x$, $y$ and $z$.
\item If $k \equiv 4,5\bmod 8$, then $k$ is a perfect square and the equation 
\[ (k-2)x^2 - 2y^2 = z^2 \] has a nontrivial solution in integers $x$, $y$ and $z$.
\end{enumerate}
\end{corollary}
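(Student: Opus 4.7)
The plan is to apply Theorem \ref{BC.thm} to the square divisible design supplied by Theorem \ref{divisible.thm}, whose parameters are $w = 2$, $u = (k^2-k+2)/2$, $\lambda_1 = 0$, $\lambda_2 = 1$, and $v = k^2-k+2$. The crucial algebraic simplification is $k^2 - \lambda_2 v = k - 2$, so the ``perfect square'' hypotheses in Theorem \ref{BC.thm} translate directly into conditions on $k - 2$ (respectively on $k - \lambda_1 = k$), and the Bose-Connor diophantine equation in part 2 becomes $(k-2)x^2 + (-1)^{u(u-1)/2} \cdot 2y^2 = z^2$.

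The first step is to classify $u$ modulo $4$ as a function of $k \bmod 8$. Writing $u-1 = k(k-1)/2$, a routine check gives: $u$ is odd exactly when $k \equiv 0,1,4,5 \bmod 8$; $u \equiv 2 \bmod 4$ exactly when $k \equiv 2,7 \bmod 8$; and $u \equiv 0 \bmod 4$ exactly when $k \equiv 3,6 \bmod 8$. With this bookkeeping in hand, parts (2) and (3) of the corollary fall out immediately from part 1 of Theorem \ref{BC.thm}, because $w = 2$ is always even and the ``sum of two squares'' condition on $k - \lambda_1 = k$ is active precisely when $u \equiv 2 \bmod 4$.

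Part (1) then follows as a free consequence: if $k \equiv 7 \bmod 8$, then $k \equiv 3 \bmod 4$, so $k$ cannot be a sum of two squares (since squares are $0$ or $1 \bmod 4$), contradicting the conclusion of part 1 of Theorem \ref{BC.thm} in the $u \equiv 2 \bmod 4$ case. For parts (4) and (5) I would invoke part 2 of Theorem \ref{BC.thm}: since $u$ is odd, the exponent $u(u-1)/2$ has the same parity as $(u-1)/2 = k(k-1)/4$, and a short case analysis, writing $k = 8m + r$ with $r \in \{0,1,4,5\}$, shows this exponent is even precisely when $k \equiv 0,1 \bmod 8$ and odd when $k \equiv 4,5 \bmod 8$. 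This fixes the sign of the $2y^2$ term and yields the two quadratic forms exactly as stated.

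The main obstacle is nothing more than careful bookkeeping: tracking $u \bmod 4$ and the parity of $u(u-1)/2$ across the eight residue classes of $k \bmod 8$, and plugging them into the correct branch of Theorem \ref{BC.thm}. No substantive new idea beyond the Bose-Connor theorem is required.
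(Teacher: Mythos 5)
Your proposal is correct and follows essentially the same route as the paper: invoke Theorem \ref{divisible.thm} to get the square divisible design with $w=2$, $u=(k^2-k+2)/2$, $\lambda_1=0$, $\lambda_2=1$, note $k^2-\lambda_2 v = k-2$, classify $u \bmod 4$ by $k \bmod 8$ (your table agrees with the paper's), and read off the conclusions from the two parts of Theorem \ref{BC.thm}. The only cosmetic difference is in part (1), where you rule out $k\equiv 7 \bmod 8$ because $k\equiv 3 \bmod 4$ is not a sum of two squares, whereas the paper notes $k-2\equiv 5\bmod 8$ is not a perfect square; both contradictions are valid instances of part 1 of Theorem \ref{BC.thm}.
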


\begin{proof}
Suppose there exists a $(k^2-k+2,k)$-MGR.
Then, from Theorem \ref{divisible.thm}, there is a square divisible design with parameters $w=2$, $u = (k^2 - k +2)/2$, $v = k^2 - k +2$, $\lambda_1 = 0$ and $\lambda_2 = 1$.  We apply Theorem \ref{BC.thm}, making use of the fact that $k^2- \lambda_2 v = k-2$.

First, we observe that $u$ is even if and only if $k \equiv 2,3 \bmod 4$.  Further, $u \equiv 2 \bmod 4$ if and only if $k \equiv 2, 7 \bmod 8$.

If $k \equiv 7 \bmod 8$, then $k^2- \lambda_2 v = k-2 \equiv 5 \bmod 8$, so $k^2- \lambda_2 v$ is not a perfect square. Therefore, from part 1.\ of Theorem \ref{BC.thm}, a 
$(k^2-k+2,k)$-MGR does not exist if $k \equiv 7 \bmod 8$.

If $k \equiv 2 \bmod 8$, then part 1.\ of Theorem \ref{BC.thm} says that $k-2$ is a perfect square and $k$ is the sum of two squares.

If $k \equiv 3,6 \bmod 8$, then part 1.\ of Theorem \ref{BC.thm} says that $k-2$ is a perfect square.

When $k \equiv 0,1 \bmod 8$, we have $u \equiv 1 \bmod 4$ and hence $(-1)^{u(u-1)/2} = 1$.
When  $k \equiv 4,5 \bmod 8$, we have $u \equiv 3 \bmod 4$ and hence $(-1)^{u(u-1)/2} = -1$.
The stated results then follow immediately from part 2.\ of Theorem \ref{BC.thm}.

\end{proof}

\subsection{$(k^2-k+2\ell,k)$-MGR}

For $v > k^2-k+2$, a $(v,k)$-MGR is not necessarily a relative difference set and it does not necessarily imply the existence of 
a square divisible design. So, in general, we cannot apply the results in Theorem \ref{BC.thm}.
However, we can derive some nice necessary conditions for the existence of certain $(v,k)$-MGR using elementary counting arguments. These arguments are in the spirit of  techniques introduced in \cite[\S 2]{Buratti99}; see also \cite{MDV}. 
Before studying MGR, we present a simple example to illustrate the basic idea.

\begin{example}
\label{BRC.exam}
Suppose we have 
a $(v,k,\lambda)$-difference set in $\zed_v$ when $v$ is even. There are $v/2-1$ nonzero even differences 
and $v/2$ odd differences, each of which occurs $\lambda$ times. 
Suppose the difference set consists of $a$ even elements and $b$ odd elements.
Then $a+b = k$ and $2ab = \lambda v/2$. So $a$ and $b$ are the solutions of the quadratic equation
\[ x^2 - kx + \frac{\lambda v}{4} =0.\]
Since $a$ and $b$ are integers, the discriminant must be a perfect square. Therefore, 
$k^2 - \lambda v$ is a square. 
However, $k(k-1) = \lambda(v-1)$, so $k^2 - \lambda v = k - \lambda$ 
must be a perfect square. (Of course, this condition is the same as in the Bruck-Ryser-Chowla Theorem 
for $v$ even, which holds for any symmetric BIBD.)
\end{example}

%
%
%
In the next theorem, we will use this counting technique to obtain necessary conditions for the existence of a $(k^2-k+2\ell,k)$-MGR for a given  integer $\ell \geq 1$. First, we give a couple of definitions that will be useful in the rest of the paper.

Suppose $X$ is a $(v,k)$-MGR. Define \[ \Delta X = \{ x - y \bmod v: x,y \in X, x \neq y\} \]
and \[L(X) = \zed_v \setminus \Delta X.\] Note that $\Delta X$ consists of all the differences obtained from pairs of distinct elements in $X$ and $L(X)$ is the complement of $\Delta X$. The set $L(X)$ is called the \emph{leave} of $X$.
For $i = 0,1$, define $L_i(X)$ to consist of the elements of $L(X)$ that are congruent to $i$ modulo $2$.

The following lemma is straightforward but useful.

\begin{lemma}
\label{even-odd.lem}
Suppose $X$ is a $(v,k)$-MGR where $v$ is even. Then
$\{0,v/2\} \subseteq L(X)$. If $v \equiv 0 \bmod 4$, then
 $|L_0(X)|$ and $|L_1(X)|$ are both even. If $v \equiv 2 \bmod 4$, then
$|L_0(X)|$ and $|L_1(X)|$ are both odd.
\end{lemma}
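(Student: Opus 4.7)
The plan is to prove the two containment statements first, and then reduce the two parity statements to a single parity computation by counting even and odd elements of $\Delta X$.

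For the containment $\{0,v/2\}\subseteq L(X)$, note that $0\notin\Delta X$ because the defining condition $x\neq y$ (with $0\le x,y\le v-1$) forces $x-y\not\equiv 0\pmod v$. For $v/2$, I would argue by contradiction: if $v/2 \in \Delta X$, then $v/2\equiv x-y\pmod v$ for some distinct $x,y\in X$, but then also $y-x\equiv -v/2\equiv v/2\pmod v$, producing the same difference from two distinct ordered pairs and violating the MGR condition.

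Next I would classify the elements of $X$ by parity: let $a$ be the number of even elements in $X$ and $b$ the number of odd ones, so $a+b=k$. A straightforward tally of ordered pairs $(x,y)$ with $x\neq y$ gives exactly $a(a-1)+b(b-1)=a^2+b^2-k$ even differences and $2ab$ odd differences, and by the MGR property these are all distinct in $\zed_v$. Since $v$ is even, the even and odd residue classes of $\zed_v$ each have $v/2$ elements, so
\[
|L_0(X)| = \frac{v}{2} + k - a^2 - b^2, \qquad |L_1(X)| = \frac{v}{2} - 2ab.
\]

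Finally, the parity statements drop out by reducing mod $2$: since $a^2+b^2\equiv a+b=k\pmod 2$, we have $|L_0(X)|\equiv v/2\pmod 2$, and since $2ab$ is even, also $|L_1(X)|\equiv v/2\pmod 2$. Splitting on $v\bmod 4$ then yields both parities at once: if $v\equiv 0\pmod 4$ then $v/2$ is even and both $|L_0(X)|,|L_1(X)|$ are even, while if $v\equiv 2\pmod 4$ then $v/2$ is odd and both are odd. There is no real obstacle here — the only subtle point is recognizing that $\{0,v/2\}\subseteq L(X)$ is automatic in the counts (since these two values are simply absent from $\Delta X$), so one does not need to treat them as separate cases in the arithmetic.
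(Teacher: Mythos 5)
Your proof is correct, but the parity half takes a genuinely different route from the paper. For the containment $\{0,v/2\}\subseteq L(X)$ you argue exactly as the paper does (the difference $v/2$ would be hit by both ordered pairs $(x,y)$ and $(y,x)$). For the parities, however, the paper never counts differences at all: it observes that $\Delta X$, hence $L(X)$, is closed under $d\mapsto v-d$, that $d$ and $v-d$ have the same parity when $v$ is even, and that the only fixed points of this involution are $0$ and $v/2$; so the elements of $L(X)$ other than $0$ and $v/2$ pair off within their parity class, and the parities of $|L_0(X)|$ and $|L_1(X)|$ are read off from whether $v/2$ is even or odd. You instead split $X$ into $a$ even and $b$ odd elements and compute the exact sizes $|L_0(X)|=v/2+k-a^2-b^2$ and $|L_1(X)|=v/2-2ab$, then reduce modulo $2$ using $a^2+b^2\equiv a+b\pmod 2$. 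Both arguments are sound; yours yields more (explicit formulas for the leave sizes, and in particular the identity $2ab=v/2-|L_1(X)|$ that the paper only introduces later in the proof of Theorem \ref{counting2.thm}), while the paper's involution argument is slightly lighter and generalizes verbatim to difference packings (Lemma \ref{even-odd-O.lem}) without redoing any counting — though your count adapts just as easily by summing over base blocks. One cosmetic remark: as you yourself note, the containment $\{0,v/2\}\subseteq L(X)$ is not actually used in your parity computation, whereas in the paper's argument the fixed points $0$ and $v/2$ are the crux; this is fine, since the lemma asserts the containment separately and you prove it separately.
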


\begin{proof}
It is evident that $0 \in L(X)$. Also, if we have
$x-y=v/2$ for some pair $(x,y)\in X\times X$, then
we have $y-x=v/2$ as well. This would imply that $v/2$
appears at least twice as a difference, which is not allowed.
Hence $\{0,v/2\}\subset L(X)$.

Now note that if $d \in \Delta X$, then $v-d\in \Delta X$
as well.
Consequently, if $d \in L(X)$, then $v-d \in L(X)$.
Of course $d = v-d$ if and only if $d = 0$ or $d = v/2$. The remaining
elements of $\zed_v$ can be matched into pairs $(d,v-d)$ having
the same parity. Thus, considering that $v/2$ is even
or odd according to whether $v\equiv 0$ or $2$ modulo $4$,
respectively, it is clear that $|L_1(X)|$ and $|L_2(X)|$
are both even in the first case and both odd in the
second.
\end{proof}

\begin{theorem}
\label{counting2.thm}
Suppose $v = k^2-k+2\ell$, where $\ell \geq 1$, and suppose there is a $(v,k)$-MGR. Then the following hold.
\begin{enumerate}
\item If $v \equiv 2 \bmod 4$, then $k - 2\ell + 2 + 4i$ 
is a perfect square for some integer $i \in \{0, \dots ,  \ell - 1\}$.
\item If $v \equiv 0 \bmod 4$, then $k - 2\ell  + 4i$ is 
a perfect square for some integer $i \in \{0, \dots ,  \ell - 1\} $.
\end{enumerate}
\end{theorem}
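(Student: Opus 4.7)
The plan is to adapt the counting argument sketched in Example \ref{BRC.exam}, applied not to $X$ itself as a difference set but to the leave $L(X)$ of the MGR. Let $X$ be a $(v,k)$-MGR with $v=k^2-k+2\ell$, and let $a$ and $b$ denote the number of even and odd elements of $X$ (viewed in $\{0,1,\dots,v-1\}$). Since $a+b=k$, if I can pin down $ab$ modulo a small set of integers, then the discriminant $k^2-4ab$ of $x^2-kx+ab$ must be a perfect square (because $a,b$ are integers), and this discriminant will turn out to equal $k-2\ell+(\text{something})$.

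First, I would count odd differences. A difference $x-y \bmod v$ with $x,y \in X$ is odd exactly when $x$ and $y$ have opposite parities (since $v$ is even). Hence the number of odd elements in $\Delta X$ is $2ab$. Since $\zed_v$ contains exactly $v/2$ odd elements, I get $|L_1(X)| = v/2 - 2ab$.

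Next I invoke Lemma \ref{even-odd.lem} to control $|L_1(X)|$. Note $|L(X)| = v - k(k-1) = 2\ell$.
Case (i): $v\equiv 2\bmod 4$. Here $v/2$ is odd, so $v/2\in L_1(X)$, giving $|L_1(X)|\geq 1$. Since $|L_0(X)|\geq 1$ (it contains $0$) and both sizes are odd summing to $2\ell$, I have $|L_1(X)| = 2i+1$ for some $i\in\{0,\dots,\ell-1\}$. Then
\[
2ab \;=\; \frac{v}{2}-(2i+1) \;=\; \frac{k^2-k+2\ell}{2}-(2i+1),
\]
so
\[
k^2-4ab \;=\; k^2 - (v - 4i - 2) \;=\; k - 2\ell + 2 + 4i,
\]
which must be a perfect square. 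This is statement 1.
Case (ii): $v\equiv 0\bmod 4$. Here $v/2$ is even, so $\{0,v/2\}\subseteq L_0(X)$, forcing $|L_0(X)|\geq 2$ and hence $|L_1(X)|\leq 2\ell-2$. Both sizes are even by Lemma \ref{even-odd.lem}, so $|L_1(X)| = 2i$ for some $i\in\{0,\dots,\ell-1\}$. An analogous computation gives
\[
k^2-4ab \;=\; k^2-(v-4i) \;=\; k - 2\ell + 4i,
\]
which must be a perfect square. This is statement 2.

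The conceptual step is minimal — everything hinges on the two observations (a) opposite-parity pairs produce odd differences, and (b) Lemma \ref{even-odd.lem} forces the parity and the available range of $|L_1(X)|$. The only place that requires care is getting the index set exactly $\{0,\dots,\ell-1\}$ rather than $\{0,\dots,\ell\}$: in the $v\equiv 0\bmod 4$ case this comes precisely from the forced pair $\{0,v/2\}\subseteq L_0(X)$, which strips two units out of the even leave and caps $|L_1(X)|$ at $2\ell-2$. That bookkeeping is the main (and only) obstacle.
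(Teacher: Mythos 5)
Your proposal is correct and follows essentially the same route as the paper: count the even/odd elements $a,b$ of $X$, observe that the number of odd differences is $2ab$, use Lemma \ref{even-odd.lem} (together with $0\in L_0(X)$ and the location of $v/2$) to fix the parity and the range of $|L_1(X)|$, and conclude that the discriminant $k^2-4ab = k-2\ell+2+4i$ or $k-2\ell+4i$ is a perfect square. The bookkeeping that limits $i$ to $\{0,\dots,\ell-1\}$ is handled exactly as in the paper's proof.
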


\begin{proof}
Let $X$  be a $(v,k)$-MGR. Since $|X| = k$, we have 
\[ |L(X)| = v  - (k^2 - k) = 2\ell .\]
Suppose $X$ contains $a$ even elements and $b$ odd elements;
then $a+b = k$. 

Suppose first that $v \equiv 2 \bmod 4$, so $v/2$ is odd.
From Lemma \ref{even-odd.lem}, 
$|L_1(X)|$ is odd, say $|L_1(X)| = 2i+1$, and $v/2 \in L_1(X)$.
Therefore, $0 \leq i \leq \ell - 1$.

The quantity $2ab$ is equal to  the number of odd differences in $\Delta X$,
so \[2ab = \frac{v}{2} - (2i+1) = \frac{v - 2 - 4i}{2}.\]
It follows that $a$ and $b$ are the solutions of the  quadratic equation
\[ x^2 - kx + \frac{v - 2 - 4i}{4} = 0.\] 
The solutions $a$ and $b$ must be integers, which can happen only if the discriminant is a perfect square. Hence,
\[ k^2 - (v - 2 - 4i) = k - 2\ell + 2 + 4i\] is a perfect square.
Hence, $k - 2\ell + 2 + 4i$ is a perfect square for some integer $i \in \{0, \dots ,  \ell - 1\} $.

The proof is similar when $v \equiv 0 \bmod 4$. 
Here, from Lemma \ref{even-odd.lem}, $|L_1(X)|$ is even, say $|L_1(X)| = 2i$ and $\{0,v/2\} \subseteq L_0(X)$. 
Since $\{0,v/2\} \subseteq L_0(X)$, we have
$|L_1(X)| \leq 2 \ell-2$. Hence $i \in \{0, \dots ,  \ell -1\} $. 

We have
\[ 2ab = \frac{v}{2} - 2i = \frac{v - 4i}{2}.\]
It follows that $a$ and $b$ are the solutions of the  quadratic equation
\[ x^2 - kx + \frac{v  - 4i}{4} = 0.\] 
The solutions $a$ and $b$ must be integers, which can happen only if the discriminant is a perfect square. Hence,
\[ k^2 - (v  - 4i) = k - 2\ell  + 4i\] is a perfect square.
Hence, $k - 2\ell  + 4i$ is a perfect square for some integer $i \in \{0, \dots ,  \ell - 1\} $.
\end{proof}

\begin{example}
Suppose $k = 10$ and $v = 94 = 10 \times 9 + 2 \times 2$, $\ell = 2$. 
Here $v\equiv 2 \bmod 4$.  Then we compute
\[10 - 2 \times 2 + 2 + 4i = 8 + 4i\] for $i = 0,1$, obtaining $8$ and $12$. Neither of these is a perfect square, so 
we conclude that a $(94,10)$-MGR does not exist.
\end{example}



It is interesting to see what Theorem \ref{counting2.thm} tells us when $\ell = 1$. 

\begin{corollary}
\label{counting.thm}
Suppose there is a $(k^2-k+2,k)$-MGR. Then the following hold.
\begin{enumerate}
\item If $k \equiv 2,3\bmod 4$, then $k-2$ is a perfect square. 
\item If $k \equiv 0,1\bmod 4$, then $k$ is a perfect square.
\end{enumerate}
\end{corollary}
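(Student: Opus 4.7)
The plan is to read off the corollary as the immediate $\ell = 1$ specialization of Theorem~\ref{counting2.thm}. The first step is to translate the hypothesis on $v \bmod 4$ in that theorem into a hypothesis on $k \bmod 4$. Since $v = k^2 - k + 2 = k(k-1) + 2$ and $k(k-1)$ is always even, $v$ is even, and a straightforward case check on $k \bmod 4$ shows that $v \equiv 2 \bmod 4$ exactly when $k \equiv 0, 1 \bmod 4$, while $v \equiv 0 \bmod 4$ exactly when $k \equiv 2, 3 \bmod 4$.

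Next I would invoke Theorem~\ref{counting2.thm} with $\ell = 1$. The quantifier ``for some $i \in \{0, \dots, \ell - 1\}$'' then collapses to the single value $i = 0$, so the conclusion becomes a single perfect-square condition in each case. When $v \equiv 2 \bmod 4$, the theorem forces $k - 2\ell + 2 + 4i = k$ to be a perfect square, which combined with the parity translation yields statement~2 of the corollary. When $v \equiv 0 \bmod 4$, the theorem forces $k - 2\ell + 4i = k-2$ to be a perfect square, yielding statement~1.

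There is essentially no obstacle here: all the substantive work—the even/odd difference count, the leave analysis from Lemma~\ref{even-odd.lem}, and the discriminant argument—has already been carried out inside Theorem~\ref{counting2.thm}. The remaining task is purely bookkeeping, namely restricting $i$ to its unique allowed value and converting the hypothesis on $v \bmod 4$ to the stated hypothesis on $k \bmod 4$.
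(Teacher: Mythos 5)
Your proposal is correct and follows exactly the paper's own argument: specialize Theorem~\ref{counting2.thm} to $\ell=1$ (so $i=0$) and observe that $v=k^2-k+2\equiv 2\bmod 4$ precisely when $k\equiv 0,1\bmod 4$ and $v\equiv 0\bmod 4$ precisely when $k\equiv 2,3\bmod 4$. Nothing is missing; the parity bookkeeping and the resulting square conditions match the paper's proof.
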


\begin{proof} Take $\ell=1$ in Theorem \ref{counting2.thm}; 
then $v = k^2 -k + 2$ and we have $i = 0$. We note that $v \equiv 0 \bmod 4$ if $k \equiv 2,3\bmod 4$
and $v \equiv 2 \bmod 4$ if $k \equiv 0,1\bmod 4$, so the stated results follow immediately.
\end{proof}

\begin{remark} We observe that Theorem \ref{divisible.thm} and 
Corollary \ref{BC.cor} provide stronger necessary conditions for the existence of $(k^2-k+2,k)$-MGR
than those stated in Corollary \ref{counting.thm}. 
\end{remark}

For certain values of $k$, we are able to find ``intervals'' in which MGR cannot exist.
Define \[S_{k,\ell} = \{k - 2\ell + 2 + 4i: 0 \leq i \leq \ell - 1\}\] 
and define \[T_{k,\ell} = \{k - 2\ell  + 4i: 0 \leq i \leq \ell - 1\}.\] 

\begin{lemma}
\label{congruent.lem}
Suppose $v = k^2-k+2\ell$. 
\begin{enumerate}
\item If $v \equiv 2 \bmod 4$, then  all elements of $S_{k,\ell}$ are $\equiv 0,1 \bmod 4$. 
\item If $v \equiv 0 \bmod 4$, then all elements of $T_{k,\ell}$ are $\equiv 0,1 \bmod 4$. 
\end{enumerate}
\end{lemma}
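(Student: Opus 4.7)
The plan is to reduce to a single congruence mod $4$ in each case, then do a small case analysis on $k \bmod 4$ using the hypothesis on $v \bmod 4$.

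First I would observe that the elements of $S_{k,\ell}$ all differ by multiples of $4$, so they are pairwise congruent modulo $4$; in particular, every element of $S_{k,\ell}$ is congruent to $k - 2\ell + 2 \pmod 4$. Similarly, every element of $T_{k,\ell}$ is congruent to $k - 2\ell \pmod 4$. Thus it suffices to show that, under the respective hypotheses on $v \bmod 4$, the quantities $k - 2\ell + 2$ and $k - 2\ell$ lie in $\{0,1\} \pmod 4$.

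Next I would translate the hypothesis on $v \bmod 4$ into a parity condition on $\ell$, depending on $k \bmod 4$. Since $k(k-1)$ is always even, and in fact $k(k-1) \equiv 0 \pmod 4$ when $k \equiv 0,1 \pmod 4$ and $k(k-1) \equiv 2 \pmod 4$ when $k \equiv 2,3 \pmod 4$, the congruence $v = k(k-1) + 2\ell \equiv 2 \pmod 4$ forces $\ell$ odd when $k \equiv 0,1 \pmod 4$ and $\ell$ even when $k \equiv 2,3 \pmod 4$. The congruence $v \equiv 0 \pmod 4$ forces the opposite parities of $\ell$.

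Finally I would just verify the four cases for each statement. For statement 1, in each of the four possibilities for $k \bmod 4$ together with the parity of $\ell$ forced above, one computes $k - 2\ell + 2 \bmod 4$ and finds the value is $0$ (when $k$ is even) or $1$ (when $k$ is odd). For statement 2, the same check on $k - 2\ell \bmod 4$ yields the same outcome. There is no real obstacle here; the ``hard'' step is just bookkeeping, ensuring that the parity of $\ell$ is correctly extracted from the mod $4$ condition on $v$ and carried through each of the four cases.
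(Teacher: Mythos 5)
Your proof is correct: the reduction to a single representative (all elements of $S_{k,\ell}$, resp.\ $T_{k,\ell}$, agree mod $4$), the extraction of the parity of $\ell$ from $v \bmod 4$ via $k(k-1) \equiv 0$ or $2 \pmod 4$, and the four-case check of $k-2\ell+2$ (resp.\ $k-2\ell$) all go through; I verified each case. The paper reaches the same conclusion by a shortcut that avoids the case analysis entirely: from $v = k^2 - k + 2\ell \equiv 2 \pmod 4$ one gets directly that every element $k - 2\ell + 2 + 4i$ of $S_{k,\ell}$ is congruent to $k^2 \pmod 4$, and since $k^2 \equiv 0,1 \pmod 4$ for every integer $k$, the claim follows at once (and similarly for $T_{k,\ell}$ when $v \equiv 0 \pmod 4$). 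So the two arguments rest on the same mod-$4$ arithmetic, but the paper's observation that the set elements are congruent to a square collapses your eight cases into one line; your version, in exchange, makes explicit which residue ($0$ for $k$ even, $1$ for $k$ odd) actually occurs, which the paper's statement does not record.
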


\begin{proof}
We prove 1. Suppose $v = 2\ell + k^2 - k \equiv 2 \bmod 4$. Then
$2\ell + k^2 - k - 2 - 4i \equiv 0 \bmod 4$.
It follows that $k^2  \equiv k - 2\ell+ 2 + 4i \bmod 4$. Since $k^2 \equiv 0,1 \bmod 4$ for all integers $k$, 
the result follows. 

The proof of 2.\ is similar.
\end{proof}

\begin{theorem}
\label{main-nonexist.thm}
Let $t$ be a positive integer.
\begin{enumerate}
\item If $k = 4t^2+4t+4$, then there does not exist
a $(k^2-k+4s,k)$-MGR for all $s$ such that $1\leq s \leq t$.
\item If $k = 4t^2+4t+2$, then there does not exist
a $(k^2-k+4s,k)$-MGR for  all $s$ such that $1\leq s \leq t$.
\item If $k = 4t^2+3$, then there does not exist
a $(k^2-k+4s-2,k)$-MGR for  all $s$ such that $1\leq s \leq t$.
\item If $k = 4t^2+1$, then there does not exist
a $(k^2-k+4s-2,k)$-MGR for  all $s$ such that $1\leq s \leq t$.
\end{enumerate}
\end{theorem}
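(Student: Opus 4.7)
The plan is a direct application of Theorem \ref{counting2.thm} followed by a squeeze between consecutive perfect squares. In each of the four parity cases, Theorem \ref{counting2.thm} produces a short list of integers, one of which must be a perfect square if an MGR exists. Under the hypothesis $s \leq t$ I want to show that the list lies strictly between two consecutive squares and so contains none, yielding the desired contradiction. To select the right branch I first compute $v \bmod 4$: since $k^2 - k \equiv 0 \bmod 4$ exactly when $k \equiv 0, 1 \bmod 4$, a short check gives $v \equiv 0 \bmod 4$ in cases 1 and 3 and $v \equiv 2 \bmod 4$ in cases 2 and 4. Also, $\ell = 2s$ in cases 1 and 2, while $\ell = 2s - 1$ in cases 3 and 4.

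For cases 1 and 2, I would substitute $k = 4t^2 + 4t + 4$ into $T_{k,2s}$ and $k = 4t^2 + 4t + 2$ into $S_{k,2s}$. In either case the candidate set collapses to the common form $\{4m : t^2 + t + 1 - s \leq m \leq t^2 + t + s\}$. For $1 \leq s \leq t$ this range satisfies $t^2 + 1 \leq m \leq t^2 + 2t$, so $t^2 < m < (t+1)^2$; thus no such $m$ is a perfect square, and neither is any $4m$, which contradicts Theorem \ref{counting2.thm}.

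For cases 3 and 4, I would substitute $k = 4t^2 + 3$ into $T_{k,2s-1}$ and $k = 4t^2 + 1$ into $S_{k,2s-1}$. In either case the candidate set collapses to the common form $\{4t^2 + 5 + 4j : -s \leq j \leq s - 2\}$. For $s \leq t$ the extremes satisfy $(2t-1)^2 + 4 \leq 4t^2 + 5 - 4s$ and $4t^2 + 4s - 3 \leq (2t+1)^2 - 4$, so every candidate lies strictly in the open interval $((2t-1)^2, (2t+1)^2)$. The only perfect square in that interval is $(2t)^2 = 4t^2$, and the equation $4t^2 + 5 + 4j = 4t^2$ is impossible since $4 \nmid 5$. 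Hence no candidate is a square, again contradicting Theorem \ref{counting2.thm}.

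The only real obstacle is the bookkeeping: one must carefully track $v \bmod 4$, $\ell$, and the associated set $S_{k,\ell}$ or $T_{k,\ell}$ in each of the four congruence classes of $k$. Once that is set up, cases 1--2 reduce to a single elementary squeeze between $t^2$ and $(t+1)^2$, and cases 3--4 to an analogous squeeze between $(2t-1)^2$ and $(2t+1)^2$, so the four statements follow uniformly.
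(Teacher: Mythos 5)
Your proposal is correct and follows essentially the same route as the paper: invoke Theorem \ref{counting2.thm} after sorting out $v \bmod 4$ and $\ell$ in each case, then squeeze the resulting candidate set ($T_{k,\ell}$ or $S_{k,\ell}$) between consecutive squares — your division by $4$ in cases 1–2 and exclusion of $(2t)^2$ via $4\nmid 5$ in cases 3–4 is just a cosmetic variant of the paper's use of Lemma \ref{congruent.lem} together with the bounds $(2t)^2 < T_{k,\ell} < (2t+2)^2$. The only difference is that you write out all four cases, whereas the paper proves case 1 and declares the rest similar.
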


\begin{proof}
We prove 1.  Denote $\ell = 2s$, where $1\leq s \leq t$ and let $v = k^2-k+4s$.
Since $k \equiv 0 \bmod 4$, we have $v \equiv 0 \bmod 4$. 
So we examine the elements in $T_{k,\ell}$, which are all  congruent to 
$0$ modulo $4$ by Lemma \ref{congruent.lem}.
For the smallest element of  $T_{k,\ell}$, which is $k - 2\ell $, we have 
\begin{align*} k - 2\ell &\geq k - 4t \\
&= 4(t^2+t+1) - 4t \\
&= 4t^2 + 4 \\
& > (2t)^2.
\end{align*}
Similarly, for the largest element of $T_{k,\ell}$, which is $k - 2\ell + 4(\ell-1)$, we have
\begin{align*} k - 2\ell + 4(\ell-1) 
&\leq k + 4t - 4 \\
&= 4(t^2+t+1) + 4t - 4 \\
&= 4t^2 + 8t \\
& < (2t+2)^2.
\end{align*}
Since all the elements of $T_{k,\ell}$ are congruent to 
$0$ modulo $4$ and they are between two consecutive even squares, there cannot be any perfect squares in the set $T_{k,\ell}$.

The proofs of 2., 3.\ and 4.\ are similar. 
\end{proof}

\begin{example}
If we take $t=3$ in Theorem \ref{main-nonexist.thm}, we see that
there does not exist a $(k^2-k+4,k)$-MGR, a $(k^2-k+8,k)$-MGR or
a $(k^2-k+12,k)$-MGR when $k = 50, 52$. Further, there does not exist a $(k^2-k+2,k)$-MGR, a $(k^2-k+6,k)$-MGR or
a $(k^2-k+10,k)$-MGR when $k = 37,39$. 
\end{example}

We will show that we can improve Theorem \ref{counting2.thm} when $v \equiv 0 \bmod 4$.  First we state and prove a simple numerical lemma.

\begin{lemma}
\label{numerical.lem}
Let $a$ be a positive integer. Then  
\begin{equation}
\label{num1.eq}
 \left\{h(a-h) \ : \ 0\leq h \leq \left\lfloor \frac{a}{2} \right\rfloor \right\}
=
\left\{ \left( \frac{a}{2} \right)^2-\left( \frac{a}{2} - h \right)^2 \ : \ 0\leq h \leq \left\lfloor \frac{a}{2} \right\rfloor \right\}.
\end{equation}
Further, if $a$ is even, then 
\begin{equation}
\label{num2.eq} \left\{h(a-h) \ : \ 0\leq h \leq   \frac{a}{2}   \right\}
=
\left\{ \left( \frac{a}{2} \right)^2- h^2 \ : \ 0\leq h \leq   \frac{a}{2}   \right\}.
\end{equation}
\end{lemma}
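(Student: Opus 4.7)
The plan is to verify both set identities by direct algebraic manipulation; there is no combinatorial content beyond two one-line polynomial identities.

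For equation~(\ref{num1.eq}), I would show that the identity actually holds pointwise, not merely as sets. Expanding,
\[ \left( \frac{a}{2} \right)^2 - \left( \frac{a}{2} - h \right)^2 = \left( \frac{a}{2} \right)^2 - \left( \frac{a}{2} \right)^2 + ah - h^2 = h(a-h). \]
Since the indexing set $\{0, 1, \ldots, \lfloor a/2 \rfloor\}$ is the same on both sides, the two sets coincide term by term. (It does not matter that $(a/2)^2$ and $(a/2-h)^2$ need not be integers individually when $a$ is odd; their difference always is.)

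For equation~(\ref{num2.eq}), which requires $a$ even so that $a/2$ is an integer, I would use the involution $\sigma : h \mapsto a/2 - h$ on $\{0, 1, \ldots, a/2\}$. A direct computation gives
\[ \sigma(h)\bigl(a - \sigma(h)\bigr) = \left( \frac{a}{2} - h \right)\left( \frac{a}{2} + h \right) = \left( \frac{a}{2} \right)^2 - h^2, \]
so reindexing the left-hand set of~(\ref{num2.eq}) by $\sigma$ yields the right-hand set.

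I do not expect any obstacle beyond the algebraic expansions above; the only small point requiring care is to confirm that $\sigma$ is indeed a bijection of $\{0, 1, \ldots, a/2\}$ (which is immediate from $\sigma \circ \sigma = \mathrm{id}$) and that the index range for the first identity runs up to $\lfloor a/2 \rfloor$ irrespective of the parity of $a$.
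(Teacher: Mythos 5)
Your proposal is correct and follows essentially the same route as the paper: the pointwise identity $h(a-h)=\left(\frac{a}{2}\right)^2-\left(\frac{a}{2}-h\right)^2$ gives (\ref{num1.eq}), and for even $a$ the reindexing $h\mapsto \frac{a}{2}-h$ of the index set $\{0,\dots,\frac{a}{2}\}$ gives (\ref{num2.eq}). No issues.
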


\begin{proof}
Clearly we have
\[ h(a-h) = \left( \frac{a}{2} \right)^2 - \left( \frac{a}{2} - h \right)^2.\]
Therefore (\ref{num1.eq}) holds. 
If $a$ is even, then 
\[ \left\{ \left( \frac{a}{2} \right)^2-\left( \frac{a}{2} - h \right)^2 \ : \ 0\leq h \leq  \frac{a}{2}  \right\}
=
\left\{ \left( \frac{a}{2} \right)^2-h^2 \ : \ 0\leq h \leq  \frac{a}{2}  \right\}
,\]
and (\ref{num2.eq}) holds.
\end{proof}

\begin{theorem}
\label{new3.5.thm}
Suppose that $X$ is a $(v,k)$-MGR with $v=k^2-k+2\ell$. Then the following hold.
\begin{enumerate}
\item If $v\equiv 0 \bmod 8$, then there exist integers  $i\in\{0,1,\dots,\ell-1\}$
and $j\in\{0,1,\dots,\ell-1-i\}$ such that $k-2\ell+4i$ is a perfect square 
and $k-2\ell+2i+4j$ is a sum of two squares.
\item If $v\equiv 4 \bmod 8$, then there exist integers  $i\in\{0,1,\dots,\ell-1\}$
and $j\in\{0,1,\dots,\ell-1-i\}$ such that that $k-2\ell+4i$ is a perfect square and 
$k-2\ell+2i+4j+2$ is a sum of two squares.
\end{enumerate}
\end{theorem}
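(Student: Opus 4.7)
The plan is to extend the counting argument of Theorem \ref{counting2.thm} by refining residues modulo 2 to residues modulo 4. Let $X$ be the given $(v,k)$-MGR, and for $r \in \{0,1,2,3\}$ let $a_r = |\{x \in X : x \equiv r \bmod 4\}|$ and $M_r = |\{d \in L(X) : d \equiv r \bmod 4\}|$. Set $a = a_0+a_2$ and $b = a_1+a_3$, so $a+b=k$. Since $v \equiv 0 \bmod 4$, the involution $d \mapsto v-d$ swaps the residue classes $1$ and $3$, hence $M_1 = M_3$; call this common value $i$. Then $M_1+M_3 = 2i$ is precisely the odd-leave count used in the proof of Theorem \ref{counting2.thm}, which already yields $ab = v/4 - i$ and that $k-2\ell+4i$ is a perfect square with $i \in \{0,\dots,\ell-1\}$.

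Next I would double-count the differences in $\Delta X$ congruent to $2 \bmod 4$. Enumerating ordered residue pairs $(0,2),(2,0),(1,3),(3,1)$, the count is $2a_0 a_2 + 2 a_1 a_3$; on the other hand it equals $v/4 - M_2$. Writing $d = a_0-a_2$ and $e = a_1-a_3$, one has $4a_0 a_2 = a^2 - d^2$ and $4a_1 a_3 = b^2 - e^2$. Combining these with $a^2+b^2 = k^2 - 2ab = k^2 - v/2 + 2i$ yields after a short calculation
\[
d^2 + e^2 \;=\; k^2 - v + 2i + 2M_2 \;=\; k - 2\ell + 2i + 2 M_2,
\]
so the right-hand side is manifestly a sum of two squares.

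The two cases of the theorem are then separated by the parity of $M_2$, which I would pin down with Lemma \ref{even-odd.lem}. The pairing $d \leftrightarrow v-d$ sends class $2$ to itself, so $M_2$ has the same parity as the number of self-inverse elements of $\zed_v$ in class $2$ that also lie in $L(X)$. When $v \equiv 0 \bmod 8$, $v/2$ lies in class $0$, so class $2$ has no self-inverse; then $M_2 = 2j$ is even and $d^2+e^2 = k-2\ell+2i+4j$. When $v \equiv 4 \bmod 8$, $v/2$ lies in class $2$ and is forced into $L(X)$ by Lemma \ref{even-odd.lem}, so $M_2 = 2j+1$ is odd and $d^2+e^2 = k-2\ell+2i+4j+2$. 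The bound $j \leq \ell-1-i$ in either case is read off from the inventory identity $M_0 + M_2 = 2\ell - 2i$ together with the lower bound $M_0 \geq 2$ (respectively $M_0 \geq 1$), which follows from applying Lemma \ref{even-odd.lem} in class $0$.

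The only delicate point is the self-inverse bookkeeping that distinguishes $v \equiv 0 \bmod 8$ from $v \equiv 4 \bmod 8$; everything else is routine algebraic manipulation of the cardinalities $a_r$ and $M_r$. Note that the same $i$ produced by Theorem \ref{counting2.thm} serves for both conclusions, since both conditions are read off from a single modular Golomb ruler $X$ with the refined mod-4 decomposition above.
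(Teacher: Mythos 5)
Your proposal is correct and takes essentially the same route as the paper's proof: refine the even/odd count from the proof of Theorem \ref{counting2.thm} to residues modulo $4$, count the differences $\equiv 2 \bmod 4$ in $\Delta X$, and combine with $ab=v/4-i$ to write $k-2\ell+2i+2M_2$ as $(a_0-a_2)^2+(a_1-a_3)^2$, with the case split governed by the parity of $M_2$. The only cosmetic differences are that you use the identity $4a_0a_2=a^2-(a_0-a_2)^2$ directly rather than invoking Lemma \ref{numerical.lem}, and you justify the parity of $M_2$ explicitly via the involution $d\mapsto v-d$ (including the bound $j\le \ell-1-i$), which the paper asserts more briefly.
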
  

\begin{proof}
Suppose  $v\equiv 0 \bmod 8$; then $v/2 \equiv 0 \bmod 4$. From Lemma \ref{even-odd.lem} and the 
proof of Theorem \ref{counting2.thm}, there are an even number, say $2i$,
of odd elements in $L(X)$,  where $0\leq i\leq \ell-1$. 
The number of elements $\equiv 2 \bmod 4$ that are in $L(X)$  is also even, say $2j$, 
and we must have $0\leq j\leq \ell-1-i$.

Let $a$ and $b$ be the number of even and odd elements in $X$, respectively.
We showed in the proof of Theorem \ref{counting2.thm} that 
$a$ and $b$ are the solutions to the  quadratic equation \[x^2-kx+\frac{v}{4}-i=0,\] and 
hence $a+b = k$, $ab = \frac{v}{4}-i$, and
$k^2-v+4i=k-2\ell+4i$ is a perfect square. 

Let $n_\alpha$ be the number of elements of $X$ that are congruent to $\alpha$ modulo $4$, for $\alpha=0,1,2,3$.
It is evident that $n_0+n_2=a$ and that $n_1+n_3=b$. Thus, from (\ref{num1.eq}) in Lemma \ref{numerical.lem}, we have
\[
n_0n_2\in
\left\{h(a-h) \ : \ 0\leq h \leq \left\lfloor \frac{a}{2} \right\rfloor \right\}
=
\left\{ \left( \frac{a}{2} \right)^2-\left( \frac{a}{2} - h \right)^2 \ : \ 0\leq h \leq \left\lfloor \frac{a}{2} \right\rfloor \right\}
\]
and
\[
n_1n_3\in
 \left\{h(b-h) \ : \ 0\leq h \leq \left\lfloor \frac{b}{2} \right\rfloor \right\}
=
\left\{ \left( \frac{b}{2} \right)^2-\left( \frac{b}{2} - h \right)^2 \ : \ 0\leq h \leq \left\lfloor \frac{b}{2} \right\rfloor \right\}.
\]

\bigskip
Multiplying by four, we get:
\begin{equation}
\label{n0n2}
4n_0n_2 \in
\left\{  a^2-(a-2h)^2 \ : \ 0\leq h \leq \left\lfloor \frac{a}{2} \right\rfloor \right\}
\end{equation}
and  
\begin{equation}
\label{n1n3}
4n_1n_3
\in 
\left\{  b^2-(b-2h)^2 \ : \ 0\leq h \leq \left\lfloor \frac{b}{2} \right\rfloor \right\}.
\end{equation}

Now note that $2n_0n_2+2n_1n_3$ is the number of differences in $\Delta X$ that are congruent to $2$ modulo $4$, 
which of course is also equal to $\frac{v}{4}-2j$.
Thus, from (\ref{n0n2}) and (\ref{n1n3}), there are  integers $h_1$, $h_2$ such that
$0\leq h_1 \leq \left\lfloor \frac{a}{2} \right\rfloor$, $0\leq h_2 \leq \left\lfloor \frac{b}{2} \right\rfloor$
and 
\begin{equation}
\label{square.eq}
a^2-(a-2h_1)^2+b^2-(b-2h_2)^2=\frac{v}{2}-4j.
\end{equation}
Using the facts that
\begin{align*} 
a+b&=k \quad \text{and}\\ 
ab &= \frac{v}{4}-i,
\end{align*} 
we have
\begin{align*}
a^2+b^2 &= (a+b)^2- 2ab \\
& =k^2-\frac{v}{2}+2i.
\end{align*}
Substituting this into (\ref{square.eq}), we have
\[k^2-\frac{v}{2}+2i -(a-2h_1)^2-(b-2h_2)^2=\frac{v}{2}-4j,\]
or \[ k^2-v +2i+4j = (a-2h_1)^2 + (b-2h_2)^2.\]
Since $v=k^2-k+2\ell$, we obtain 
\[ k - 2\ell +2i+4j = (a-2h_1)^2 + (b-2h_2)^2.\]
We conclude that $k-2\ell+2i+4j$ is a sum of two squares.

\bigskip

Suppose  $v\equiv 4 \bmod 8$.
As before, 
there are an even number, say $2i$,
of odd elements in $L(X)$, where $0\leq i\leq \ell-1$. 
However, $v/2 \equiv 2 \bmod 4$, so the number of elements $\equiv 2 \bmod 4$ that are not in $\Delta X$ is an odd number, say $2j+1$, 
where $0\leq j\leq \ell-1-i$.
 
Reasoning exactly as in the case where $v\equiv 0 \bmod 8$, we  find that $k-2\ell+4i$ is a perfect square
and that $k-2\ell+2i+4j+2$ is a sum of two squares.
\end{proof}

We now give an application of Theorem \ref{new3.5.thm}.

\begin{corollary}
\label{new3.5.cor}
Suppose that $k = n^2 -2\ell + 4$ where $\ell \geq 1$ and $n \geq \ell + 1$. Let $v = k^2 - k + 2\ell$.
\begin{enumerate}
\item If $v \equiv 0 \bmod 8$ and $k-2$ is not the sum of two squares, then a $(v,k)$-MGR does not exist.
\item If $v \equiv 4 \bmod 8$ and $k$ is not the sum of two squares, then a $(v,k)$-MGR does not exist.
\end{enumerate}
\end{corollary}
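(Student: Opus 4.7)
The plan is to apply Theorem \ref{new3.5.thm} and show that the special form $k = n^2 - 2\ell + 4$ with $n \geq \ell + 1$ forces the indices $i$ and $j$ supplied by that theorem to take the unique values $i = \ell - 1$ and $j = 0$. At these values the ``sum of two squares'' conclusion of Theorem \ref{new3.5.thm} reduces exactly to the statement that $k - 2$ (in case 1) or $k$ (in case 2) is a sum of two squares, and the contrapositive then yields the corollary.

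First I substitute $k = n^2 - 2\ell + 4$ into the expression $k - 2\ell + 4i$ appearing in the perfect-square condition of Theorem \ref{new3.5.thm}, obtaining
\[ k - 2\ell + 4i = n^2 - 4(\ell - 1 - i). \]
Letting $m = \ell - 1 - i$, so that $m$ ranges over $\{0, 1, \ldots, \ell - 1\}$, the requirement is that $n^2 - 4m$ be a perfect square for some such $m$. I claim the only admissible value is $m = 0$. Indeed, if $n^2 - 4m = p^2$ with $0 \leq p < n$, then $(n-p)(n+p) = 4m$; the two factors have the same parity because their sum is $2n$, and since their product is divisible by $4$ both must be even. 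Writing $n - p = 2s$ with $1 \leq s \leq n - 1$ (the case $s = 0$ giving $m = 0$), we obtain $m = s(n - s)$, whose minimum over $s \in \{1, \ldots, n - 1\}$ is $n - 1$, attained at the endpoints. But the hypothesis $n \geq \ell + 1$ gives $n - 1 \geq \ell > \ell - 1$, contradicting $m \leq \ell - 1$. Hence $m = 0$ and $i = \ell - 1$.

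With $i = \ell - 1$ fixed, the range $\{0, \ldots, \ell - 1 - i\}$ for $j$ collapses to $\{0\}$, so $j = 0$ as well. Substituting $i = \ell - 1$, $j = 0$ into the second conclusion of Theorem \ref{new3.5.thm}, in case 1 (where $v \equiv 0 \bmod 8$) the integer
\[ k - 2\ell + 2(\ell - 1) + 4 \cdot 0 = k - 2 \]
must be a sum of two squares, while in case 2 (where $v \equiv 4 \bmod 8$) the integer
\[ k - 2\ell + 2(\ell - 1) + 4 \cdot 0 + 2 = k \]
must be a sum of two squares. The contrapositive of each assertion is the corresponding statement of the corollary.

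The main obstacle is the short numerical argument pinning down $m = 0$. The hypothesis $n \geq \ell + 1$ is calibrated precisely so that the smallest positive value $n - 1$ attainable by $s(n-s)$ on $\{1, \ldots, n-1\}$ exceeds the upper bound $\ell - 1$ on $m$; this is what prevents any nonzero $m$ from yielding a perfect square and forces uniqueness of the indices in Theorem \ref{new3.5.thm}. Once this uniqueness is established, the rest of the proof is a straightforward substitution.
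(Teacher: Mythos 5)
Your proposal is correct and takes essentially the same approach as the paper: both force $i=\ell-1$ and $j=0$ in Theorem \ref{new3.5.thm} and then read off the contrapositive. The only cosmetic difference is the uniqueness step, where you factor $n^2-p^2=4m$ to get $m=s(n-s)\geq n-1>\ell-1$, whereas the paper notes that the candidates $k-2\ell+4i$ with $i\leq\ell-2$ have the same parity as $n^2$ and lie strictly between $(n-2)^2$ and $n^2$; the two arguments are equivalent.
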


\begin{proof}
We note that $k - 2\ell + 4(\ell-1) = n^2$ is a perfect square. We claim there are no squares 
of the form $k-2\ell+4i$ where $0 \leq i \leq \ell-2$.
This is because the smallest such integer is 
\begin{align*} k - 2\ell &= n^2 - 4\ell+ 4\\
& \geq n^2 - 4(n-1)+ 4\\
& = n^2 -4n+8\\
& = (n-2)^2 +4.
\end{align*}
Since all these integers have the same parity as $n^2$ and they are not larger than $k - 2\ell + 4(\ell-1) = n^2$, the result follows.
Therefore $i=\ell-1$  is the only value in $[0,\ell-1]$ such that $k-2\ell+4i$ is a perfect square.

Now, in applying Theorem \ref{new3.5.thm}, we need to check that a certain condition holds for 
$0 \leq j \leq \ell-1 - i$. Since $i = \ell-1$, we only need to consider $j=0$. Theorem \ref{new3.5.thm} then states that 
a $(v,k)$-MGR does not exist if $v \equiv 0 \bmod 8$ and $k - 2\ell +2(\ell-1)  = k-2$ is not a sum of two squares; or if
$v \equiv 4 \bmod 8$ and $k - 2\ell +2(\ell-1) + 2  = k$ is not a sum of two squares. (It is not hard to verify that $v 
\equiv 0 \bmod 4$, so either $v \equiv 0 \bmod 8$ or $v \equiv 4 \bmod 8$.)
\end{proof}

We give some examples to illustrate results that can be obtained using Corollary \ref{new3.5.cor}.

\begin{example}
Suppose we take $n = 4t+2$ and $\ell=5$ in Corollary \ref{new3.5.cor}.  Then $v = k^2 - k + 10 \equiv 0 \bmod 8$.
Here we have \[k-2 = (4t+2)^2 -10+4-2 = 4(4t^2 + 4t-1).\] This integer is not the sum of two squares because 
$4t^2 + 4t-1 \equiv 3 \bmod 4$. Hence, no  $(k^2-k+10,k)$-MGR exists if $k = 4(2t+1)^2 - 6$. 
The first values of $k$ covered by this result are 
$k= 30, 94, 190, 318, 478, 670, 894, 1150, 1438, 1758$. 
\end{example}

\begin{example}
Suppose we take $n = 4t+2$ and $\ell=3$ in Corollary \ref{new3.5.cor}.  Then $v = k^2 - k + 6 \equiv 0 \bmod 8$.
Here we have \[k -2 = (4t+2)^2 - 6+4-2 = 16t^2 +16t.\] This integer is  the sum of two squares if and only if 
$t^2+t$ is the sum of two squares. Hence, no  $(k^2-k+10,k)$-MGR exists if $k = 4(2t+1)^2 - 2$ and
$t^2+t$ is not the sum of two squares. 
The first values of $k$ covered by this result are 
$k = 98, 194, 482, 674, 898, 1762, 2114, 2498, 2914, 3362$. 
\end{example}

\begin{example}
Suppose we take $n = 4t$ and $\ell=5$ in Corollary \ref{new3.5.cor}.  Then $v = k^2 - k + 10 \equiv  4 \bmod 8$.
Here we have \[k   = 16t^2 -6 = 2(8t^2 - 3).\] This integer is  the sum of two squares if and only if 
$8t^2 - 3$ is the sum of two squares. Hence, no  $(k^2-k+10,k)$-MGR exists if $k = (4t)^2 - 6$ and
$8t^2 - 3$ is not the sum of two squares. 
The first values of $k$ covered by this result are 
$k=138,  570, 1290, 2298, 2698, 3594, 5178, 6394, 7050, 9210$.
\end{example}

\section{Nonexistence Results for $(v,k,1)$-OOC}
\label{nonexistenceOOC.sec}

In this section, we prove nonexistence results for some optimal $(v,k,1)$-optical orthogonal codes of size $n >1$.
Note that we are investigating the cases where $v$ is even in this section.

\begin{lemma} 
\label{optimal2.lem}
Suppose $1 \leq \ell \leq \binom{k}{2}$ and $v = k(k-1)n+2\ell$. 
Then a  $(v,k,1)$-OOC of size $n$ is optimal.
\end{lemma}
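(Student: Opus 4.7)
The plan is to apply Theorem~\ref{optimalbound.thm} directly and verify that the hypothesis on $\ell$ forces the floor in the upper bound to equal $n$. Recall that a $(v,k,1)$-OOC of size $n$ is optimal precisely when $n = \lfloor (v-1)/(k(k-1)) \rfloor$, so it suffices to show this equality under the given conditions.

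First, I would substitute $v = k(k-1)n + 2\ell$ into the expression from Theorem~\ref{optimalbound.thm}, obtaining
\[
\frac{v-1}{k(k-1)} = n + \frac{2\ell - 1}{k(k-1)}.
\]
The floor of this quantity equals $n$ if and only if the fractional part $(2\ell-1)/(k(k-1))$ lies in the half-open interval $[0,1)$.

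Next, I would use the hypothesis $1 \leq \ell \leq \binom{k}{2} = k(k-1)/2$ to bound $2\ell - 1$. The lower bound $\ell \geq 1$ gives $2\ell - 1 \geq 1 > 0$, while the upper bound $\ell \leq k(k-1)/2$ gives $2\ell - 1 \leq k(k-1) - 1 < k(k-1)$. Thus $0 < (2\ell-1)/(k(k-1)) < 1$, so $\lfloor (v-1)/(k(k-1)) \rfloor = n$ as required.

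There is no real obstacle here; the statement is essentially a numerical verification that the parameters $v = k(k-1)n + 2\ell$ with $\ell$ in the specified range hit the correct residue class modulo $k(k-1)$ to saturate the Johnson-type bound of Theorem~\ref{optimalbound.thm}. The significance of the lemma is not in depth of argument but in delineating precisely which even values of $v$ yield an optimality regime in which the nonexistence techniques developed earlier for modular Golomb rulers (and more generally for difference packings) can be brought to bear on optimal OOCs.
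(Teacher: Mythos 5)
Your proof is correct and follows essentially the same route as the paper: substitute $v = k(k-1)n+2\ell$ into the bound of Theorem~\ref{optimalbound.thm} and observe that $0 < 2\ell-1 < k(k-1)$ forces the floor to equal $n$. Nothing further is needed.
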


\begin{proof}
For $v$ as given, we have 
\[ \left\lfloor \frac{v-1}{k(k-1)} \right\rfloor = n + \left\lfloor \frac{2\ell-1}{k(k-1)} \right\rfloor.\]
However, $2 \ell - 1 < k(k-1)$ because $\ell \leq \binom{k}{2}$, so 
\[ \left\lfloor \frac{v-1}{k(k-1)} \right\rfloor = n .\]
\end{proof}

Suppose $X=\{X_1,\dots,X_n\}$  is a $(v,k,1)$-optical orthogonal code. We define  $\Delta X$ and the leave, $L(X)$, in the obvious way:
\[ \Delta X = \bigcup_{i=1}^n \{ x - y \bmod v: x,y \in X_i, x \neq y\} \]
and 
\[ L(X) = \zed_v \setminus \Delta X .\]

The following lemma is a straightforward generalization of Lemma \ref{even-odd.lem}.

\begin{lemma}
\label{even-odd-O.lem}
Suppose $X$ is a $(v,k,1)$-optical orthogonal code where $v$ is even. Then
$\{0,v/2\} \subseteq L(X)$. If $v \equiv 0 \bmod 4$, then
 $|L_0(X)|$ and $|L_1(X)|$ are both even. If $v \equiv 2 \bmod 4$, then
$|L_0(X)|$ and $|L_1(X)|$ are both odd.
\end{lemma}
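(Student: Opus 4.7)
The plan is to observe that the proof of Lemma \ref{even-odd.lem} carries over almost verbatim once we recognize that the key structural properties used there depend only on $\Delta X$ being a set that is closed under negation modulo $v$ and that excludes $0$. Both properties still hold for an OOC, since an OOC of size $n$ is equivalent to a $(v,k;n)$-difference packing, so the multiset of differences arising from all pairs across all base blocks is in fact a set of distinct nonzero elements of $\zed_v$.

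First I would handle the forced containment. Clearly $0 \in L(X)$ by the difference packing property. For $v/2$, if there existed some base block $X_i$ containing distinct $x,y$ with $x - y \equiv v/2 \bmod v$, then $y - x \equiv -v/2 \equiv v/2 \bmod v$ would be a second occurrence of $v/2$ in the differences from $X_i$, contradicting the definition of an OOC (equivalently, the distinctness of differences in the packing). Hence $v/2 \in L(X)$.

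Next I would do the parity count, exactly as in Lemma \ref{even-odd.lem}. Since $\Delta X$ is closed under $d \mapsto v - d$, so is $L(X)$. The elements of $\zed_v \setminus \{0, v/2\}$ partition into pairs $\{d, v-d\}$ with $d \neq v-d$, and because $v$ is even both members of such a pair have the same parity. Thus $L(X) \setminus \{0, v/2\}$ is a union of such pairs, contributing an even number of even elements and an even number of odd elements to $L(X)$. It remains to account for the forced elements $0$ and $v/2$: if $v \equiv 0 \bmod 4$ then $v/2$ is even, so both forced elements lie in $L_0(X)$, giving $|L_0(X)|$ and $|L_1(X)|$ both even; if $v \equiv 2 \bmod 4$ then $v/2$ is odd, so one forced element goes to each of $L_0(X)$ and $L_1(X)$, making both counts odd.

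There is essentially no obstacle here beyond checking that the single-block reasoning of Lemma \ref{even-odd.lem} is insensitive to the presence of multiple base blocks; the argument uses $\Delta X$ only through the two facts that $0 \notin \Delta X$ and $\Delta X = -\Delta X$, both of which are immediate from the difference packing formulation. Accordingly, no new combinatorial input is required.
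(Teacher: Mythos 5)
Your proposal is correct and matches the paper's intent: the paper gives no separate proof, stating only that the lemma is a straightforward generalization of Lemma \ref{even-odd.lem}, and your argument is precisely that generalization (via the difference-packing formulation, the forced pair $\{0,v/2\}$, closure of $\Delta X$ under negation, and the same-parity pairing of $d$ with $v-d$). No gaps; nothing further is needed.
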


\begin{theorem}\label{CountingArgument}
Given $v=k(k-1)n+2\ell$ with $1 \leq \ell \leq \binom{k}{2}$, define the two sets 
\[S= \left\{ \left\lfloor \frac{v}{4} \right\rfloor - h \ : \ 0\leq h\leq \ell-1 \right\} .\]
and
\[T=\left\{h(k-h) \ : \ 0\leq h\leq \left\lfloor \frac{k}{2} \right\rfloor \right\}.\]
Then a necessary condition for the existence of an optimal $(v,k,1)$-OOC 
is that at least one element of $S$  is representable as a sum of $n$ integers of $T$.
\end{theorem}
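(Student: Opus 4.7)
The approach is to refine the even/odd counting argument already used for a single block (Theorem \ref{counting2.thm}) to handle the $n$ base blocks of the OOC simultaneously, and then read off the resulting identity as a representation problem.

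For each block $X_i$, write $a_i$ for the number of even elements and $b_i = k - a_i$ for the number of odd elements. The number of odd (ordered) differences arising from $X_i$ is exactly $2 a_i b_i = 2 h_i(k - h_i)$, where $h_i := \min(a_i, b_i) \in \{0, 1, \dots, \lfloor k/2 \rfloor\}$. Thus each $h_i(k-h_i)$ lies in $T$, and the total number of odd differences in $\Delta X$ equals
\[
2 \sum_{i=1}^{n} h_i(k - h_i).
\]

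The next step is to compute the same quantity a different way, via the leave. Since $|X_i| = k$ for each $i$, the total number of differences covered is $n k (k-1) = v - 2\ell$, so $|L(X)| = 2\ell$. The number of odd residues in $\zed_v$ is $v/2$ (as $v$ is even), and Lemma \ref{even-odd-O.lem}, together with $\{0, v/2\} \subseteq L(X)$, pins down the parity of $|L_1(X)|$ in each case. When $v \equiv 0 \bmod 4$, both $0$ and $v/2$ are even, so $|L_0(X)|$ is an even integer $\geq 2$ and $|L_1(X)| = 2j$ for some $0 \leq j \leq \ell - 1$; the number of odd differences is then $v/2 - 2j$. When $v \equiv 2 \bmod 4$, the element $v/2$ is odd, so $|L_1(X)| = 2j+1$ with $0 \leq j \leq \ell-1$, and the number of odd differences is $v/2 - (2j+1)$.

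Equating the two counts gives
\[
\sum_{i=1}^{n} h_i(k - h_i) = \left\lfloor \frac{v}{4} \right\rfloor - j
\]
in both parity cases (using $\lfloor v/4 \rfloor = v/4$ if $v \equiv 0 \bmod 4$ and $\lfloor v/4 \rfloor = (v-2)/4$ if $v \equiv 2 \bmod 4$). Since $0 \leq j \leq \ell - 1$, the right-hand side lies in $S$, while each summand on the left lies in $T$. This exhibits an element of $S$ as a sum of $n$ elements of $T$, which is exactly the claimed necessary condition.

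I do not foresee a genuine obstacle here: the only subtle point is bookkeeping the parity of $|L_1(X)|$ together with the forced membership of $0$ and $v/2$ in the leave, so that the range $0 \leq j \leq \ell - 1$ comes out cleanly in both residue classes of $v$ modulo $4$.
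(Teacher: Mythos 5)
Your proposal is correct and follows essentially the same route as the paper: count the odd differences blockwise as $2a_ib_i$ (twice an element of $T$) and globally via the leave using Lemma \ref{even-odd-O.lem}, with the parity bookkeeping forcing $|L_1(X)|$ to leave between $0$ and $2\ell-2$ odd residues uncovered (in the appropriate parity class), so the equated count is twice an element of $S$. The only cosmetic difference is that you spell out the two residue classes of $v$ modulo $4$ explicitly, which the paper compresses into the single statement that the number of odd differences is twice an element of $S$.
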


\begin{proof}
Note than an optimal $(v,k,1)$-OOC will have size $n$, from Lemma \ref{optimal2.lem}.
Assume that $X=\{X_1,\dots,X_n\}$ is an (optimal) $(v,k,1)$-OOC.

From Lemma \ref{even-odd-O.lem}, we see that $v/2 \in L(X)$ and 
$|L_1(X)|$ has the same parity as $\frac{v}{2}$.
Also, as in the proof of Lemma \ref{counting2.thm}, $0 \leq |L_1(X)| \leq 2\ell-2$.
Thus, considering that the number of odd elements in 
$\zed_v$ is $v/2$, we see that the number of odd differences  
in $\bigcup_{i=1}^n \Delta X_i$ is twice an element of $S$.

Suppose that $X_i$ contains exactly $a_i$ even elements, so  $k-a_i$ is the number of  odd elements in $X_i$.
Then the number of odd elements in $\Delta X_i$ is $2a_i(k-a_i)$, that is, twice an element of $T$. It follows that
at least one element of $S$ is representable as a sum of $n$ integers belonging to $T$.
\end{proof}

Let us see some consequences of Theorem \ref{CountingArgument}. 
As a first example, we consider the cases where $k=3$.

\begin{corollary}
\label{k=3ooc}
An optimal  $(v,3,1)$-OOC does not exist if $v \equiv 14,20 \bmod 24$. 
\end{corollary}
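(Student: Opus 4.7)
The plan is to invoke Theorem \ref{CountingArgument} directly with $k=3$, and simply show that in both residue classes the parity obstruction is fatal. With $k=3$ we have $T=\{h(3-h):0\leq h\leq 1\}=\{0,2\}$, so every sum of $n$ elements of $T$ is a nonnegative even integer. Therefore the necessary condition of Theorem \ref{CountingArgument} reduces to the requirement that $S$ contain an even integer.

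Next I would pin down $\ell$ and $n$. Writing $v = 6n+2\ell$ with $1\leq \ell \leq \binom{3}{2}=3$, note that $2\ell \equiv v \pmod 6$. For $v\equiv 14 \pmod{24}$ and for $v\equiv 20 \pmod{24}$ we have $v\equiv 2 \pmod 6$, so $2\ell\equiv 2\pmod 6$, and in the range $1\leq \ell\leq 3$ this forces $\ell=1$. Consequently $S=\{\lfloor v/4\rfloor\}$ is a singleton.

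It now remains to compute the parity of $\lfloor v/4\rfloor$ in each case. If $v=24m+14$, then $\lfloor v/4\rfloor = 6m+3$, which is odd. If $v=24m+20$, then $\lfloor v/4\rfloor = 6m+5$, which is again odd. Hence $S$ contains only an odd integer, which cannot be written as a sum of elements of $T=\{0,2\}$. By Theorem \ref{CountingArgument}, no optimal $(v,3,1)$-OOC exists in either case.

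There is no real obstacle here; everything is a short congruence computation once $k=3$ is plugged into Theorem \ref{CountingArgument}. The only mild care needed is to verify that $\ell=1$ is the unique option in $\{1,2,3\}$ for the given residue classes, and to observe that the floor function evaluates differently depending on whether $v\equiv 2\pmod 4$ (the case $v\equiv 14\pmod{24}$) or $v\equiv 0\pmod 4$ (the case $v\equiv 20\pmod{24}$); in both subcases the resulting integer is odd, which is all that is needed.
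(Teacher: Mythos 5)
Your proof is correct and follows essentially the same route as the paper: apply Theorem \ref{CountingArgument} with $k=3$, observe $T=\{0,2\}$ forces any admissible sum to be even, and check that for $v\equiv 14,20 \bmod 24$ one has $\ell=1$ and $S=\{\lfloor v/4\rfloor\}$ is a singleton consisting of an odd integer. The only difference is cosmetic: the paper tabulates all twelve even residue classes modulo $24$, while you compute just the two relevant ones directly.
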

\begin{proof}
When we take $k=3$ in Theorem \ref{CountingArgument}, we  have
$T = \{0,2\}$. Suppose $v$ is even and we write $v = 24t + 2w$, where $1 \leq w \leq 12$.  
We express $v$ in the form $v = 6n+2\ell$, where $1 \leq \ell \leq 3$, obtaining  
the values of $n$ and $\ell$ and the sets $S$ that are shown in Table \ref{k=3.tab}.

\begin{table}
\caption{Applications of  Theorem \ref{CountingArgument} when $k=3$}
\label{k=3.tab}
\[
\begin{array}{r|r|c|c}
 \multicolumn{1}{c|}{v}     & \multicolumn{1}{c|}{n}   &  \ell & S \\ \hline
24t+ 2  &  4t  & 1    & \{ 6t\} \\
24t+ 4  &  4t  & 2    & \{ 6t,6t+1\} \\
24t+ 6  &  4t  & 3    & \{ 6t-1,6t,6t+1\}\\
24t+ 8  &  4t+1  & 1  & \{ 6t+2\}\\
24t+ 10  &  4t+1  & 2 & \{ 6t+1,6t+2\}\\
24t+ 12  &  4t+1  & 3 & \{ 6t+1,6t+2,6t+3\}\\
24t+ 14  &  4t+2  & 1 & \{ 6t+3\}\\
24t+ 16  &  4t+2  & 2 & \{ 6t+3,6t+4\}\\
24t+ 18  &  4t+2  & 3 & \{ 6t+2,6t+3,6t+4\}\\
24t+ 20  &  4t+3  & 1 & \{ 6t+5\}\\
24t+ 22  &  4t+3 & 2 & \{ 6t+4,6t+5\}\\
24t+ 24  &  4t+3 & 3 & \{ 6t+4,6t+5,6t+6\}
\end{array}
\]
\end{table}
When $v \equiv 14,20 \bmod 24$, the set $S$ consists of a single element, which is an odd integer. Clearly it is not a sum of even integers, so we conclude from Theorem \ref{CountingArgument} that an optimal  $(v,3,1)$-OOC does not exist if $v \equiv 14,20 \bmod 24$. 
\end{proof}

\begin{remark}
It is well-known that an optimal  $(v,3,1)$-OOC exists if and only if $v \not\equiv 14,20 \bmod 24$ (e.g., see \cite{AB,BK} for discussion about this result).
\end{remark}

We adapt the  argument used in Corollary \ref{k=3ooc} to prove a generalization that works for 
odd integers $k \not\equiv 1 \bmod 8$.
First, we observe that, if $k$ is odd, then all the elements of $T$ are even. 
So we obviously get a contradiction in Theorem \ref{CountingArgument} if the set $S$ consists of a single odd integer. 
This happens if $\ell = 1$ (so $v = nk(k-1)+2$) and one of the following two conditions hold:
\begin{enumerate}
\item $nk(k-1) \equiv 2 \bmod 8$ ($v \equiv 0 \bmod 4$ in this case) or
\item $nk(k-1) \equiv 4 \bmod 8$ ($v \equiv 2 \bmod 4$ in this case).
\end{enumerate}

Since $k$ is odd, we have $k \equiv 1,3,5,7 \bmod 8$. We consider each case separately.

\begin{description}
\item [$\mathbf{k \equiv 1 \bmod 8}$:] \mbox{\quad} \vspace{.05in} \\ Here $k(k-1) \equiv 0 \bmod 8$, neither of 1.\ or 2.\ can hold.
\item [$\mathbf{k \equiv 3 \bmod 8}$:] \mbox{\quad} \vspace{.05in} \\ Here $k(k-1) \equiv 6 \bmod 8$. For 1., we obtain
$6n \equiv 2 \bmod 8$, so $n \equiv 3 \bmod 4$ and  $v = (4t+3)k(k-1) + 2$ for some integer $t$. It follows that
\[v\equiv 3k(k-1) + 2 \bmod 4k(k-1).\]
For 2., we obtain
$6n \equiv 4 \bmod 8$, so $n \equiv 2 \bmod 4$ and  $v = (4t+2)k(k-1) + 2$ for some integer $t$. It follows that
\[v\equiv 2k(k-1) + 2 \bmod 4k(k-1).\] 
\item [$\mathbf{k \equiv 5 \bmod 8}$:] \mbox{\quad} \vspace{.05in} \\ Here $k(k-1) \equiv 4 \bmod 8$. For 1., we obtain
$4n \equiv 2 \bmod 8$, which is impossible.
For 2., we obtain
$4n \equiv 4 \bmod 8$, so $n \equiv 1 \bmod 2$ and  $v = (2t+1)k(k-1) + 2$ for some integer $t$. It follows that
\[v\equiv k(k-1) + 2 \bmod 2k(k-1).\] 
\item [$\mathbf{k \equiv 7 \bmod 8}$:] \mbox{\quad} \vspace{.05in} \\ Here $k(k-1) \equiv 2 \bmod 8$. For 1., we obtain
$2n \equiv 2 \bmod 8$, so $n \equiv 1 \bmod 4$ and  $v = (4t+1)k(k-1) + 2$ for some integer $t$. It follows that
\[v\equiv k(k-1) + 2 \bmod 4k(k-1).\]
For 2., we obtain
$2n \equiv 4 \bmod 8$, so $n \equiv 2 \bmod 4$ and  $v = (4t+2)k(k-1) + 2$ for some integer $t$. It follows that
\[v\equiv 2k(k-1) + 2 \bmod 4k(k-1).\] 
\end{description}
Summarizing the above discussion, we have the following theorem.

\begin{theorem}
\label{4.3}
There does not exist an optimal $(v,k,1)$-OOC whenever one of the following conditions hold:
\begin{itemize}
\item $k \equiv 3 \bmod 8$ and $v\equiv 3k(k-1) + 2 \bmod 4k(k-1)$.
\item $k \equiv 3 \bmod 8$ and $v\equiv 2k(k-1) + 2 \bmod 4k(k-1)$.
\item $k \equiv 5 \bmod 8$ and $v\equiv k(k-1) + 2 \bmod 2k(k-1)$.
\item $k \equiv 7 \bmod 8$ and $v\equiv k(k-1) + 2 \bmod 4k(k-1)$.
\item $k \equiv 7 \bmod 8$ and $v\equiv 2k(k-1) + 2 \bmod 4k(k-1)$.
\end{itemize}
\end{theorem}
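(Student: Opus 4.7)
The plan is to specialize Theorem~\ref{CountingArgument} to the case $\ell = 1$, since all five parameter families listed have $v = nk(k-1) + 2$. When $\ell = 1$, the set $S$ collapses to the singleton $\{\lfloor v/4 \rfloor\}$, so the necessary condition of Theorem~\ref{CountingArgument} becomes simply: $\lfloor v/4 \rfloor$ must be expressible as a sum of $n$ elements of $T$.

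The crucial observation is that, for odd $k$, every element $h(k-h)$ of $T$ is even, because exactly one of $h$ and $k-h$ is even. Hence any sum of $n$ elements of $T$ is even, and the necessary condition fails whenever $\lfloor v/4 \rfloor$ is odd. Examining $v \bmod 4$, I see that $\lfloor v/4 \rfloor$ is odd precisely when $v \equiv 4 \bmod 8$ (equivalently $nk(k-1) \equiv 2 \bmod 8$) or $v \equiv 6 \bmod 8$ (equivalently $nk(k-1) \equiv 4 \bmod 8$). The task thus reduces to solving these two congruences in $n$, case by case in the residue of $k$ modulo $8$.

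For each odd residue $k \equiv 1, 3, 5, 7 \bmod 8$, I would compute $k(k-1) \bmod 8$, obtaining $0, 6, 4, 2$ respectively. The case $k \equiv 1 \bmod 8$ admits no valid $n$, explaining its absence from the list; the remaining three residues each produce constraints of the form $n \equiv r \bmod m$, and substituting back into $v = nk(k-1)+2$ translates these into congruences on $v$ modulo $mk(k-1)$. Specifically, for $k \equiv 5 \bmod 8$ the constraint on $n$ has modulus $m = 2$, yielding one family $v \equiv k(k-1)+2 \bmod 2k(k-1)$; for $k \equiv 3$ and $k \equiv 7 \bmod 8$ the constraint has modulus $m = 4$, yielding two families each (one from the $v \equiv 4 \bmod 8$ branch and one from the $v \equiv 6 \bmod 8$ branch). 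The main obstacle, such as it is, is purely bookkeeping: matching each resulting congruence class of $n$ to the correct coefficient in $nk(k-1)+2$ so as to recover the five statements verbatim; no genuine arithmetic difficulty arises.
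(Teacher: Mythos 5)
Your proposal is correct and follows essentially the same route as the paper: specialize Theorem~\ref{CountingArgument} to $\ell=1$, note that every element of $T$ is even when $k$ is odd, so the single element $\lfloor v/4\rfloor$ of $S$ must be even, and then solve $nk(k-1)\equiv 2$ or $4 \pmod 8$ case by case for $k\equiv 1,3,5,7 \pmod 8$. The congruences on $n$ you leave as ``bookkeeping'' ($n\equiv 3,2\pmod 4$ for $k\equiv 3$; $n$ odd for $k\equiv 5$; $n\equiv 1,2\pmod 4$ for $k\equiv 7$) are exactly what the paper computes, so the argument is complete.
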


The following results are immediate corollaries of Theorem \ref{4.3}.
\begin{corollary}
An optimal $(v,3,1)$-OOC does not exist if
$v \equiv 14,20 \bmod 24$; an  optimal $(v,5,1)$-OOC does not exist if
$v \equiv 22 \bmod 40$; and an  optimal $(v,7,1)$-OOC does not exist if
$v \equiv 44,86 \bmod 168$.
\end{corollary}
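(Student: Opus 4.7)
The plan is to obtain each of the three statements as a direct specialization of Theorem \ref{4.3}, since the residue classes listed in the corollary are exactly what one gets by plugging $k=3,5,7$ into the appropriate bullet points and simplifying the congruences $v \equiv ck(k-1)+2 \bmod d\,k(k-1)$.

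First I would handle $k=3$. Since $3 \equiv 3 \bmod 8$, both of the first two bullets of Theorem \ref{4.3} apply, with $k(k-1) = 6$. The first bullet gives $v \equiv 3\cdot 6 + 2 \equiv 20 \bmod 24$ and the second gives $v \equiv 2\cdot 6 + 2 \equiv 14 \bmod 24$, which together yield the stated nonexistence for $v \equiv 14, 20 \bmod 24$.

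Next I would do $k=5$. Since $5 \equiv 5 \bmod 8$, only the third bullet of Theorem \ref{4.3} applies, with $k(k-1) = 20$. It gives $v \equiv 20 + 2 \equiv 22 \bmod 40$, which is precisely the claim.

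Finally, for $k=7$, since $7 \equiv 7 \bmod 8$ the last two bullets of Theorem \ref{4.3} apply, with $k(k-1) = 42$. The fourth bullet gives $v \equiv 42 + 2 \equiv 44 \bmod 168$ and the fifth gives $v \equiv 2\cdot 42 + 2 \equiv 86 \bmod 168$, yielding the remaining claims. There is no real obstacle here; the work is entirely a matter of arithmetic substitution, and the only thing to be careful about is matching the correct residue class of $k$ modulo $8$ to the correct bullet(s) of Theorem \ref{4.3} so that no case is missed.
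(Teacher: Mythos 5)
Your proof is correct and matches the paper's approach exactly: the paper presents this corollary as an immediate consequence of Theorem \ref{4.3}, obtained precisely by substituting $k=3,5,7$ into the relevant bullets. Your arithmetic ($v \equiv 14,20 \bmod 24$; $v \equiv 22 \bmod 40$; $v \equiv 44,86 \bmod 168$) is accurate, and the case assignments by residue of $k$ modulo $8$ are handled correctly.
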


\begin{example}
\label{62.exam}
As an example where Theorem \ref{CountingArgument} can be applied to an even value of $k$, consider the case of an
optimal $(62,6,1)$-OOC. Here we have $62 = 2\times 6 \times 5 + 2 \times 1$, so $n = 2$ and $\ell = 1$.
The set $S= \{15\}$ and $T= \{ 0,5,8,9\}$. It is impossible to express $15$ as the sum of two numbers from $T$, so we conclude
that an optimal $(62,6,1)$-OOC does not exist.
\end{example}

We now prove some general nonexistence results. 

\begin{theorem}
\label{twosquares.lem}
Suppose $1 \leq \ell \leq \binom{k}{2}$, and suppose an optimal $(2k(k-1) + 2\ell,k,1)$-OOC exists. Define the set $R$ as follows:
\[ R = \begin{cases}
\left\{ \left\lfloor \frac{k-\ell+1}{2} \right\rfloor + h \ : \ 0 \leq h \leq \ell -1 \right\} & \text{if $k$ is even}\\
\left\{ k-\ell+2h \  : \ 0 \leq h \leq \ell -1  \right\} & \text{if $k$ is odd and $\ell$ is even}\\
\left\{ k-\ell+2h+1 \  : \ 0 \leq h \leq \ell -1  \right\} & \text{if $k$ and   $\ell$ are both odd.}
\end{cases}
\]
Then
at least one integer in the set $R$ 
can be expressed as the sum of two squares.
\end{theorem}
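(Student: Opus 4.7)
The plan is to invoke Theorem \ref{CountingArgument} with $n=2$ and translate its numerical conclusion into a statement about sums of two squares, using the elementary identities that underlie Lemma \ref{numerical.lem}.

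First, by Lemma \ref{optimal2.lem} an optimal $(v,k,1)$-OOC with $v = 2k(k-1)+2\ell$ has size exactly $n=2$, so Theorem \ref{CountingArgument} provides integers $a_1,a_2$ with $0 \le a_i \le k$ and some $s \in S = \{\lfloor v/4\rfloor - h : 0 \le h \le \ell-1\}$ such that
\[
s \;=\; a_1(k-a_1) + a_2(k-a_2).
\]
My goal is to rewrite the right-hand side as $c - ($ sum of two squares$)$ for some explicit $c$ depending only on $k$, so that $c - s$ appears as a sum of two squares.

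Second, I apply an identity of the type used in Lemma \ref{numerical.lem}, splitting by the parity of $k$. When $k$ is even, $a(k-a) = (k/2)^2 - (k/2-a)^2$, so
\[
\tfrac{k^{2}}{2} - s \;=\; (k/2 - a_1)^{2} + (k/2 - a_2)^{2},
\]
exhibiting $k^2/2 - s$ as a sum of two integer squares. When $k$ is odd, $4a(k-a) = k^2 - (k-2a)^2$ yields
\[
2k^{2} - 4s \;=\; (k-2a_1)^{2} + (k-2a_2)^{2};
\]
since $k-2a_1$ and $k-2a_2$ are both odd, their half-sum and half-difference are integers, and the parallelogram identity $A^2 + B^2 = 2\bigl((A+B)/2\bigr)^2 + 2\bigl((A-B)/2\bigr)^2$ converts this into
\[
k^{2} - 2s \;=\; \left(\tfrac{(k-2a_1)+(k-2a_2)}{2}\right)^{\!2} + \left(\tfrac{(k-2a_1)-(k-2a_2)}{2}\right)^{\!2},
\]
so $k^2 - 2s$ is a sum of two integer squares.

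Third, I substitute $v = 2k(k-1) + 2\ell$ and $s = \lfloor v/4 \rfloor - h$, $0 \le h \le \ell-1$, and simplify the quantity $k^2/2 - s$ (when $k$ is even) or $k^2 - 2s$ (when $k$ is odd), splitting into the three parity cases for $(k,\ell)$ listed in the statement. In the even case, the identity $\lfloor v/4\rfloor = \lfloor (k(k-1)+\ell)/2\rfloor$ gives $k^2/2 - s = \lfloor (k-\ell+1)/2\rfloor + h$, matching the first definition of $R$; in the two odd-$k$ subcases an analogous computation produces $k-\ell+2h$ or $k-\ell+2h+1$, matching the remaining definitions of $R$. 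As $h$ ranges over $\{0,\dots,\ell-1\}$, the value obtained sweeps out exactly $R$, so the existence of the OOC forces at least one element of $R$ to be a sum of two squares.

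The main obstacle is purely bookkeeping: the floor function $\lfloor v/4\rfloor$ introduces a ``$+1$'' adjustment exactly when $k(k-1)+\ell$ is odd, and one must verify that this adjustment lines up correctly with the three parity cases so that the arithmetic produces the stated set $R$. Apart from this careful case analysis, the argument is a direct combination of Lemma \ref{optimal2.lem}, Theorem \ref{CountingArgument}, and the two elementary squaring identities above.
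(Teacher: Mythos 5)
Your proposal is correct and follows essentially the same route as the paper: apply Theorem \ref{CountingArgument} with $n=2$, rewrite the products $a(k-a)$ via the square identities of Lemma \ref{numerical.lem}, and carry out the parity bookkeeping in the three cases, which indeed yields the set $R$ exactly as you computed. The only difference is cosmetic: in the odd-$k$ cases you make explicit, via the parallelogram identity, the passage from ``$2(k-\ell+2h)$ is a sum of two squares'' to ``$k-\ell+2h$ is a sum of two squares,'' a step the paper leaves implicit with ``the result follows.''
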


\begin{proof}
First, suppose $k$ is even. Apply Theorem \ref{CountingArgument}. 
We have $v = 2k(k-1) + 2\ell$ and thus we have  
\[ S = \left\{ \frac{k(k-1)}{2} + \left\lfloor \frac{\ell}{2} \right\rfloor - h \ : 0 \leq h \leq \ell-1\ \right\} .\]
From Lemma \ref{numerical.lem}, we have 
\[T=\left\{ \left(\frac{k}{2}\right)^2-h^2 \ : \ 0\leq h\leq \frac{k}{2} \right\}.\]
From Theorem \ref{CountingArgument}, we have
\[ \frac{k(k-1)}{2} + \left\lfloor \frac{\ell}{2} \right\rfloor - h 
= \left(\frac{k}{2}\right)^2-i^2 + \left(\frac{k}{2}\right)^2-j^2\] 
for integers $h,i,j$ where $0 \leq h \leq \ell-1$ and $0 \leq i,j \leq k/2$.
Simplifying, we obtain
\[ \frac{k}{2} -  \left\lfloor \frac{\ell}{2} \right\rfloor + h = i^2 + j^2.\] 
The result follows by noting that
\[ \frac{k}{2} -  \left\lfloor \frac{\ell}{2} \right\rfloor = \left\lfloor \frac{k-\ell+1}{2} \right\rfloor\]
since $k$ is even.

\bigskip

Next, suppose $k$ is odd and $\ell$ is even.  Here $v \equiv 0 \bmod 4$. We again apply Theorem \ref{CountingArgument}. Here 
we have  
\[ S = \left\{ \frac{k(k-1) + \ell}{2}  - h \ : 0 \leq h \leq \ell-1\ \right\} \]
and, from Lemma \ref{numerical.lem}, we have 
\[T=\left\{ \left(\frac{k}{2}\right)^2-\left(\frac{k}{2} -h\right)^2 \ : \ 0\leq h\leq \frac{k-1}{2} \right\}.\]
From Theorem \ref{CountingArgument},  we get 
\[ \frac{k(k-1) + \ell}{2} - h 
= \left(\frac{k}{2}\right)^2- \left(\frac{k}{2} -i\right)^2  + \left(\frac{k}{2}\right)^2- \left(\frac{k}{2} -j\right)^2\] 
for integers $h,i,j$ where $0 \leq h \leq \ell-1$ and $0 \leq i,j \leq (k-1)/2$.
Simplifying, we have
\[
2k(k-1) + 2\ell - 4h = 2k^2- (k - 2i)^2 - (k - 2j)^2.\]
Therefore,
\[(k - 2i)^2 + (k - 2j)^2 = 2(k - \ell + 2h),\]
and the result follows.

\bigskip
The final case is when  $k$ and $\ell$ are both odd. The proof for this case is very similar to previous case.
\end{proof}

\begin{corollary}
\label{k/2.lem}
Suppose that $k$ has prime decomposition that contains a prime $p \equiv 3 \bmod 4$ raised to an odd power. Then an optimal $(2k(k-1) + 2,k,1)$-OOC does not exist.
\end{corollary}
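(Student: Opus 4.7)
The plan is to apply Theorem \ref{twosquares.lem} with $\ell = 1$ and derive a direct contradiction using the sum-of-two-squares characterization from Theorem \ref{sumofsquares.thm}(1).

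First, I would specialize the set $R$ from Theorem \ref{twosquares.lem} to $\ell = 1$. When $\ell = 1$, the index $h$ ranges only over $\{0\}$, so $R$ is a singleton in every case. A direct computation gives
\[
R = \begin{cases}
\{k/2\} & \text{if $k$ is even},\\
\{k\}   & \text{if $k$ is odd},
\end{cases}
\]
since in the even case $\lfloor (k-\ell+1)/2 \rfloor = \lfloor k/2 \rfloor = k/2$, and in the odd case (with $\ell = 1$ odd) the formula gives $k - 1 + 1 = k$. Thus, by Theorem \ref{twosquares.lem}, existence of an optimal $(2k(k-1)+2,k,1)$-OOC forces either $k/2$ or $k$ (according to the parity of $k$) to be expressible as a sum of two squares.

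Next, I would invoke Theorem \ref{sumofsquares.thm}(1), which characterizes sums of two squares as integers whose prime factorization contains no prime $p \equiv 3 \bmod 4$ raised to an odd power. Assume $k$ has such a prime $p$. If $k$ is odd, then $k$ itself is not a sum of two squares, contradicting the conclusion of Theorem \ref{twosquares.lem}. If $k$ is even, note that any prime $p \equiv 3 \bmod 4$ is odd, in particular $p \neq 2$; hence the exponent of $p$ in the factorization of $k/2$ equals the (odd) exponent of $p$ in $k$. Consequently, $k/2$ also fails to be a sum of two squares, again contradicting Theorem \ref{twosquares.lem}.

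The argument is almost entirely bookkeeping once Theorem \ref{twosquares.lem} is in hand; the only mild subtlety is observing that dividing by $2$ does not disturb the exponent of an odd prime, which is what lets the even and odd cases be handled uniformly. I do not anticipate any genuine obstacle — the key content was already packaged into Theorem \ref{twosquares.lem}.
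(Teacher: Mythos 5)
Your proof is correct and follows essentially the same route as the paper: specialize Theorem \ref{twosquares.lem} to $\ell=1$, note $R=\{k/2\}$ or $\{k\}$ according to parity, and conclude via Theorem \ref{sumofsquares.thm}(1). Your explicit remark that dividing by $2$ does not change the exponent of an odd prime $p\equiv 3 \bmod 4$ is a detail the paper leaves implicit, but the argument is the same.
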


\begin{proof}
Suppose an optimal $(2k(k-1) + 2,k,1)$-OOC exists.
Take $\ell = 1$ in Theorem \ref{twosquares.lem}; then $h=0$ in the definition of the set $R$.
It follows that, if $k$ is even, then $k/2$ is the sum of two squares; and if $k$ is odd, then $k$ is the sum of two squares.
The desired result then follows from Theorem \ref{sumofsquares.thm}.
\end{proof} 

\begin{remark}
The smallest applications of Corollary \ref{k/2.lem} are when $k=3$ and $k=6$.  We conclude that 
optimal $(14,3,1)$-OOC and optimal $(62,6,1)$-OOC do not exist. 
We note that Corollary \ref{k=3ooc} also shows that an optimal $(14,3,1)$-OOC does not exist.
Also, Example \ref{62.exam} proved the nonexistence of an optimal $(62,6,1)$-OOC using a slightly different argument.
The next values of $k$ covered by Corollary \ref{k/2.lem} are $k = 7,11,12,14,15,19,21,22,23,24$.
\end{remark}

Now we prove a nonexistence result that holds for arbitrarily large values of $\ell$.

\begin{theorem}
\label{infinite}
For any positive integer $\ell$, there are infinitely many even integers $k$ such 
that an optimal $(2k(k-1) + 2\ell,k,1)$-OOC
does not exist.
\end{theorem}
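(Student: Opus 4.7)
The plan is to apply Theorem \ref{twosquares.lem} in the case where $k$ is even and combine it with Lemma \ref{sequence.lem}, which produces arbitrarily long runs of consecutive integers, none of which is a sum of two squares.

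For even $k$, the set $R$ in Theorem \ref{twosquares.lem} is
\[ R = \left\{ \left\lfloor \frac{k-\ell+1}{2} \right\rfloor + h \ : \ 0 \leq h \leq \ell-1 \right\}, \]
which is a block of $\ell$ consecutive positive integers (for $k$ sufficiently large compared to $\ell$). Theorem \ref{twosquares.lem} says that if an optimal $(2k(k-1)+2\ell,k,1)$-OOC exists, then at least one element of $R$ is a sum of two squares. So it suffices to exhibit infinitely many even $k$ for which every element of $R$ fails to be a sum of two squares.

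First I would invoke Lemma \ref{sequence.lem} with $t = \ell$ to obtain $\ell$ consecutive positive integers $s+1, s+2, \dots, s+\ell$, none of which is a sum of two squares. Then I would choose $k$ even so that the smallest element of $R$ equals $s+1$: concretely, set
\[ k = \begin{cases} 2s + \ell + 2 & \text{if } \ell \text{ is even,} \\ 2s + \ell + 1 & \text{if } \ell \text{ is odd,} \end{cases} \]
which is even in both cases and satisfies $\lfloor (k-\ell+1)/2 \rfloor = s+1$. With this choice, $R = \{s+1,\dots,s+\ell\}$, so no element of $R$ is a sum of two squares, and Theorem \ref{twosquares.lem} rules out the optimal OOC.

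To finish, I need to produce infinitely many such $s$. The construction in Lemma \ref{sequence.lem} uses the Chinese Remainder Theorem with a fixed system of $\ell$ congruences modulo $\prod p_i^2$; the set of solutions is a full arithmetic progression, and in particular is infinite. For each solution $s$ the corresponding even $k$ is determined as above, and distinct sufficiently large $s$ give distinct $k$. Finally, for large $s$ the condition $1 \leq \ell \leq \binom{k}{2}$ of Theorem \ref{twosquares.lem} holds automatically, so the construction is valid. The main (mild) obstacle is bookkeeping the parities of $k$ and $\ell$ so that the floor function lands exactly on $s+1$; once this is arranged, the argument is just a combination of two results already proved in the paper.
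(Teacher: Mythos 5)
Your proposal is correct and follows essentially the same route as the paper's own proof: invoke Lemma \ref{sequence.lem} to get $\ell$ consecutive non-sums-of-two-squares and then choose an even $k$ so that the set $R$ of Theorem \ref{twosquares.lem} lands exactly on that run. Your version merely makes explicit the parity bookkeeping for $k$ and the infinitude of admissible $s$ (the CRT solutions form an arithmetic progression), details the paper leaves implicit.
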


\begin{proof}
Using Lemma \ref{sequence.lem}, 
choose an even integer $k$ such that $\left\lfloor \frac{k-\ell+1}{2} \right\rfloor + h$ is not the  sum of two squares,
for $0 \leq h \leq \ell - 1$.  
Then apply  Theorem \ref{twosquares.lem}.
\end{proof}

We next prove the nonexistence of certain optimal $(3k(k-1)+2,k,1)$-OOC with $k$ even.

\begin{theorem} 
\label{ell=1.thm}
There does not exist an optimal $(3k(k-1)+2,k,1)$-OOC  if 
$k = (4^{a+1}(24c+7)+2)/3$ with $a ,c \geq 0$ or if 
$k = 4^{a+1}(8c+5)$ with $a , c \geq 0$.
\end{theorem}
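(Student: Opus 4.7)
The plan is to apply Theorem \ref{CountingArgument} with $v = 3k(k-1)+2$, so $n=3$ and $\ell=1$. Then $S$ is the singleton $\{\lfloor v/4 \rfloor\}$, and since $k$ is even in both cases of the theorem, Lemma \ref{numerical.lem} lets us write
\[ T = \left\{ \left(\frac{k}{2}\right)^2 - h^2 \ :\ 0 \leq h \leq \frac{k}{2} \right\}.\]
A necessary condition for the claimed OOC to exist is therefore
\[ \left\lfloor \frac{v}{4} \right\rfloor \;=\; 3\left(\frac{k}{2}\right)^2 - (h_1^2+h_2^2+h_3^2)\]
for some integers $0 \leq h_1,h_2,h_3 \leq k/2$; equivalently, the integer $N(k) := 3(k/2)^2 - \lfloor v/4 \rfloor$ must be a sum of three squares. (The bound $h_i\leq k/2$ is automatic since $N(k)$ grows only linearly in $k$, so it is not a binding constraint for any $k\geq 4$, and the theorem's values of $k$ are all much larger.) My strategy is to compute $N(k)$ explicitly in each of the two residue classes $k\equiv 0,2 \bmod 4$ and then invoke part~2 of Theorem~\ref{sumofsquares.thm} to detect when $N(k)$ fails to be a sum of three squares.

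A direct expansion of $v = 3k^2-3k+2$ gives
\[ N(k) = \begin{cases} \dfrac{3k}{4}, & k\equiv 0 \bmod 4, \\[4pt] \dfrac{3k-2}{4}, & k\equiv 2 \bmod 4. \end{cases} \]
By Theorem~\ref{sumofsquares.thm}(2), $N(k)$ fails to be a sum of three squares precisely when $N(k) = 4^a(8b+7)$ for some $a,b\geq 0$. I then translate each case into a condition on $k$ and verify that the resulting $k$ indeed has the required residue mod $4$.

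For $k\equiv 0\bmod 4$, the equation $3k/4 = 4^{a}(8b+7)$ rearranges to $3k = 4^{a+1}(8b+7)$; divisibility by $3$ forces $8b+7 \equiv 0 \bmod 3$, i.e.\ $b\equiv 1\bmod 3$, so writing $b=3c+1$ gives $k = 4^{a+1}(8c+5)$, which is automatically $\equiv 0 \bmod 4$. For $k\equiv 2 \bmod 4$, the equation $(3k-2)/4 = 4^a(8b+7)$ becomes $3k = 4^{a+1}(8b+7) + 2$; reducing mod $3$ forces $b\equiv 0 \bmod 3$, and substituting $b=3c$ yields $k = (4^{a+1}(24c+7)+2)/3$, and a quick check of $(4^{a+1}(24c+7)+2)\bmod 12$ confirms $k\equiv 2 \bmod 4$. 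In either case $N(k)$ is not a sum of three squares, so by Theorem~\ref{CountingArgument} no optimal $(3k(k-1)+2,k,1)$-OOC exists. The only mildly tricky step is the modular bookkeeping that enforces integrality of $k$ and the correct residue of $k$ mod $4$; everything else is routine substitution.
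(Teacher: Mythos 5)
Your proposal is correct and follows essentially the same route as the paper: apply Theorem \ref{CountingArgument} with $n=3$, $\ell=1$, rewrite $T$ as $\{(k/2)^2-h^2\}$ via Lemma \ref{numerical.lem}, deduce that $3k/4$ (for $k\equiv 0 \bmod 4$) or $(3k-2)/4$ (for $k\equiv 2 \bmod 4$) must be a sum of three squares, and then use Legendre's theorem together with the same integrality/residue bookkeeping to arrive at $k=4^{a+1}(8c+5)$ and $k=(4^{a+1}(24c+7)+2)/3$. The only cosmetic differences are your explicit definition of $N(k)$ and the (unneeded for the nonexistence direction) remark that the bound $h_i\leq k/2$ is not binding.
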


\begin{proof}
Assume that $X$ is an optimal $(3k(k-1)+2,k,1)$-OOC
with $k$ even. We apply  Theorem \ref{CountingArgument} with $n=3$. 
Here, with the usual notation, we have
\[ S = \left\{ \frac{3k^2-3k+2}{4}  \right\} \] if $k \equiv 2 \bmod 4$, and
\[ S = \left\{ \frac{3k^2-3k}{4}  \right\} \] if $k \equiv 0 \bmod 4$.
Also, as in the proof of Theorem \ref{twosquares.lem}, we have 
\[T = \left\{ \left( \frac{k}{2} \right)^2 - h^2 \ : \ 0 \leq h \leq  \frac{k}{2} \right\}.\]
It follows that the unique element in the set $S$
must be a sum of three elements of $T$.

For $k \equiv 2 \bmod 4$, we have
\[ \frac{3k^2-3k+2}{4} = 3\left( \frac{k}{2} \right)^2 - ({h_1}^2+{h_2}^2+{h_3}^2)\] for
integers $h_1,h_2,h_3$. It follows that $(3k-2)/4$
is a sum of three squares, and hence $(3k-2)/4$ is not of the
form $4^a(8b+7)$ where $a,b \geq 0$.  Thus, if 
\begin{equation}
\label{4.9.1}
\frac{3k-2}{4} = 4^a(8b+7),
\end{equation} an optimal $(3k(k-1)+2,k,1)$-OOC does not exist.
(\ref{4.9.1}) holds if and only  if 
\[k = \frac{4^{a+1}(8b+7) + 2}{3}.\]
In order for $k$ to be an integer, $b$ must be divisible by $3$, say $b = 3c$. Therefore,  if
\[k = \frac{4^{a+1}(24c+7) + 2}{3},\]
where $a,c \geq 0$, an optimal $(3k(k-1)+2,k,1)$-OOC does not exist.

The case $k \equiv 0 \bmod 4$ is similar. Here, 
$3k/4$
must be a sum of three squares, and hence $3k/4$ is not of the
form $4^a(8b+7)$.
Therefore an  optimal $(3k(k-1)+2,k,1)$-OOC does not exist if
\[ k = \frac{4^{a+1}(8b+7)}{3}.\]
In order for $k$ to be an integer, we must have $b \equiv 1 \bmod 3$, say $b = 3c+1$.
Then $(8b+7)/3 = 8c+5$. We conclude that an optimal $(3k(k-1)+2,k,1)$-OOC does not exist if
\[ k = 4^{a+1}(8c+5),\] where $a,c \geq 0$.
\end{proof}

Finally, we prove the nonexistence of certain optimal $(3k(k-1)+4,k,1)$-OOC with $k$ even.

\begin{theorem} 
There does not exist an optimal $(3k(k-1)+4,k,1)$-OOC  if 
$k = (4^{a+3}(24c+23)-2)/3$ with $a ,c \geq 0$ or if 
$k = 4^{a+3}(8c+5)$ with $a , c \geq 0$.
\end{theorem}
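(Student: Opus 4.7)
The plan is to mirror the proof of Theorem~\ref{ell=1.thm}, only now applying Theorem~\ref{CountingArgument} with $n=3$ and $\ell=2$ instead of $\ell=1$, and replacing Legendre's Three-square Theorem by its two-consecutive-integer version Lemma~\ref{consec.lem}. Throughout, assume $X$ is an optimal $(v,k,1)$-OOC with $v=3k(k-1)+4$ and $k$ even. Since $\ell = 2 \leq \binom{k}{2}$, Lemma~\ref{optimal2.lem} guarantees that $X$ has exactly $n=3$ base blocks, so Theorem~\ref{CountingArgument} applies.

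First I would compute the sets $S$ and $T$. Since $k$ is even, Lemma~\ref{numerical.lem} gives
\[T=\bigl\{(k/2)^2-h^2 \ :\ 0\le h\le k/2\bigr\},\]
so any sum of three elements of $T$ has the form $3(k/2)^2-(h_1^2+h_2^2+h_3^2)$. The set $S$ has two consecutive elements, which must be evaluated separately depending on the residue of $k\bmod 4$: if $k\equiv 0\bmod 4$ then $v\equiv 0\bmod 4$ and one checks $S=\{3k(k-1)/4+1,\,3k(k-1)/4\}$, while if $k\equiv 2\bmod 4$ then $v\equiv 2\bmod 4$ and $S=\{(3k(k-1)+2)/4,\,(3k(k-1)-2)/4\}$. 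Subtracting from $3(k/2)^2$ in each case, the condition from Theorem~\ref{CountingArgument} becomes: at least one of the two consecutive integers $\{3k/4-1,\,3k/4\}$ (when $k\equiv 0\bmod 4$), or $\{(3k-2)/4,\,(3k+2)/4\}$ (when $k\equiv 2\bmod 4$), is a sum of three squares.

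So nonexistence is equivalent to both of these consecutive integers failing to be sums of three squares. At this point I invoke Lemma~\ref{consec.lem}: consecutive integers $n,n+1$ are both non-representable as a sum of three squares precisely when $n=4^a(8b+7)-1$ with $a\ge 2$ and $b\ge 0$. Solving $3k/4-1=4^a(8b+7)-1$ (respectively $(3k-2)/4=4^a(8b+7)-1$) for $k$ yields the two claimed families after handling the divisibility-by-3 constraint that forces $k$ to be an integer. In the $k\equiv 0\bmod 4$ case, one needs $b\equiv 1\bmod 3$; writing $b=3c+1$ makes $8b+7=3(8c+5)$ and gives $k=4^{a+1}(8c+5)$ with $a\ge 2$, i.e.\ $k=4^{a+3}(8c+5)$ for $a,c\ge 0$. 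In the $k\equiv 2\bmod 4$ case, one needs $b\equiv 2\bmod 3$; writing $b=3c+2$ yields $8b+7=24c+23$ and $k=(4^{a+1}(24c+23)-2)/3$ with $a\ge 2$, i.e.\ $k=(4^{a+3}(24c+23)-2)/3$ for $a,c\ge 0$.

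The main obstacle is purely bookkeeping: tracking the parity of $v$ to identify $S$ correctly in each case, and then sorting out the congruence conditions on $b$ modulo $3$ needed to ensure $k$ is a positive integer and to recover the stated exponent shift from $a\ge 2$ to $a\ge 0$. No new idea beyond Theorem~\ref{CountingArgument} and Lemma~\ref{consec.lem} is required; the argument is a direct ``$\ell=2$'' analogue of Theorem~\ref{ell=1.thm}.
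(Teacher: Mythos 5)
Your proposal is correct and takes essentially the same route as the paper: apply Theorem~\ref{CountingArgument} with $n=3$, $\ell=2$, reduce the condition to one of the two consecutive integers $3k/4-1,\,3k/4$ (for $k\equiv 0\bmod 4$) or $(3k-2)/4,\,(3k+2)/4$ (for $k\equiv 2\bmod 4$) being a sum of three squares, invoke Lemma~\ref{consec.lem}, and solve the resulting divisibility constraints to get the two stated families. The only nit is that nonexistence is \emph{implied by}, not equivalent to, both consecutive integers failing to be sums of three squares, but since you only use that direction the argument stands as the paper's does.
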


\begin{proof}
We proceed as in the proof of Theorem \ref{ell=1.thm}, by applying  Theorem \ref{CountingArgument} with $n=3$. 
Assume that $X$ is an optimal $(3k(k-1)+4,k,1)$-OOC
with $k$ even. We have
\[ S = \left\{ \frac{3k^2-3k-2}{4} , \frac{3k^2-3k-2}{4} + 1 \right\} \] if $k \equiv 2 \bmod 4$, and
\[ S = \left\{ \frac{3k^2-3k}{4}, \frac{3k^2-3k}{4} + 1  \right\} \] if $k \equiv 0 \bmod 4$.
Also, 
\[T = \left\{ \left( \frac{k}{2} \right)^2 - h^2 \ : \ 0 \leq h \leq  \frac{k}{2} \right\}.\]
At least one element in the set $S$
must be a sum of three elements of $T$.

Suppose $k \equiv 2 \bmod 4$ and let $n = (3k+2)/4-1$. 
Proceeding as n the proof of Theorem \ref{ell=1.thm}, we see that one of $n$ or $n+1$ is the sum of three squares.
However, if $n+1 = 4^a(8b+7)$
where $a \geq 2$, then Lemma \ref{consec.lem}  implies that neither $n$ nor $n+1$ is the sum of three squares.
In this case, optimal $(3k(k-1)+4,k,1)$-OOC does not exist. 
This occurs when
\[ \frac{3k+2}{4} = 4^a(8b+7),\]
 with $a \geq 2$, or 
\[ k = \frac{4^{a+1}(8b+7)-2}{3}.\]
Since $k$ is an integer, $b \equiv 2 \bmod 3$, say $b = 3c+2$, and then
\[ k = \frac{4^{a+1}(24c+23)-2}{3},\] 
where $a \geq 2$. For $k$ of this form, an optimal $(3k(k-1)+4,k,1)$-OOC does not exist. 

Suppose $k \equiv 0 \bmod 4$ and let $n = 3k/4-1$. Here, by the same logic as above, 
an optimal $(3k(k-1)+4,k,1)$-OOC does not exist  when
\[ \frac{3k}{4} = 4^a(8b+7),\]
or \[ k = \frac{4^{a+1}(8b+7)}{3},\] where $a \geq 2$.
Here, $b \equiv 1 \bmod 3$, say $b = 3c+1$, and then
\[ k = 4^{a+1}(8c+5),\] where $a \geq 2$.
For $k$ of this form, an optimal $(3k(k-1)+4,k,1)$-OOC does not exist. 
\end{proof}

\section{Other Types of Designs}
\label{other.sec}

In this section, we obtain necessary conditions for the existence of certain cyclic Steiner 2-designs and relative difference families using the techniques we have developed. 

\subsection{Cyclic Steiner 2-designs}
\label{nonexistencecyclicBIBD.sec}

A \emph{Steiner 2-design of order $v$ and block-size $k$}, denoted as S$(2,k,v)$, consists of a set of $k$-subsets (called \emph{blocks}) of a $v$-set (whose elements are called \emph{points}) such that every pair of points occurs in a unique block.
An S$(2,k,v)$ is \emph{cyclic} if there is a cyclic permutation of the $v$ points that maps every block to a block.

It is well-known that a cyclic S$(2,k,v)$ 
exists only if $v\equiv 1 \text{ or } k \bmod k(k-1)$.
A cyclic S$(2,k,v)$ with $v\equiv 1 \bmod k(k-1)$ is  
equivalent to a $(v,k,1)$-OOC of size $n$; in this case the leave is $\{0\}$.
Further, a cyclic S$(2,k,v)$ with $v\equiv k \bmod k(k-1)$ is  
equivalent to a $(v,k,1)$-OOC of size $n$ whose leave 
is the subgroup of $\zed_v$ of order $k$.  

Assume that $X=\{X_1,\dots,X_n\}$ is an $(k(k-1)n+k,k,1)$-OOC of size $n$ that is obtained from a 
cyclic S$(2,k,k(k-1)n+k)$ with both $k$ and $n$ even. 
The leave  $L(X)$ has exactly $k/2$ odd elements and therefore the number of odd 
differences in $\bigcup_{i=1}^n \Delta X_i$ is $k(k-1)n/2$. 

Reasoning as in the proof of Theorem \ref{twosquares.lem}, we see that $k(k-1)n/4$ is the sum of $n$ integers
in the set 
\[T=\left\{ \left(\frac{k}{2}\right)^2-h^2 \ : \ 0\leq h\leq \frac{k}{2} \right\}.\]
Thus we have \[\frac{kn}{4}={h_1}^2+{h_2}^2+ \dots +{h_n}^2\] for a suitable $n$-tuple $(h_1,\dots,h_n)$ 
of nonnegative integers, each of which does not exceed  ${k}/{2}$. Using  Lagrange's Four-square Theorem 
(Theorem \ref{sumofsquares.thm}),
it is an easy exercise to see that such an $n$-tuple certainly exists for $n \geq 4$. 

However, if $n=2$, this is not always the case. Here we require
\[ \frac{k}{2}={h_1}^2+{h_2}^2\] for nonnegative integers $h_1, h_2 \leq k/2$.
As stated in Theorem \ref{sumofsquares.thm}, a positive integer 
can be written as the as a sum of two squares if and only if its prime decomposition contains no prime $p \equiv 3 \bmod 4$ raised to an odd power. So we obtain the following result.

\begin{theorem}
\label{twosquares}
If $k$ is an even integer whose prime decomposition contains a prime $p \equiv 3 \bmod 4$ raised to an odd power, then
there does not exists a cyclic S$(2,k,2k(k-1)+k)$.
\end{theorem}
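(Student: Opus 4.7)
The plan is to reduce this to the calculation already outlined in the paragraphs immediately preceding the statement, and then invoke part~1 of Theorem~\ref{sumofsquares.thm}. Let $v = 2k(k-1)+k = k(2k-1)$. A cyclic S$(2,k,v)$ is equivalent to a $(v,k,1)$-OOC $X=\{X_1,X_2\}$ of size $n=2$ whose leave $L(X)$ equals the unique subgroup of $\zed_v$ of order $k$; this is the subgroup $\langle 2k-1\rangle = \{\,i(2k-1) : 0 \le i \le k-1\}$. Since $2k-1$ is odd and $k$ is even, exactly $k/2$ of the elements of $L(X)$ are odd (namely those with $i$ odd).

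Next I would carry out precisely the counting argument described above the statement, specialized to $n=2$. The number of odd elements of $\zed_v$ is $v/2$, and $k/2$ of them lie in $L(X)$, so the number of odd differences in $\Delta X_1\cup\Delta X_2$ is $v/2 - k/2 = k(k-1)$. If $X_j$ contains $a_j$ even and $k-a_j$ odd elements, then $X_j$ contributes $2a_j(k-a_j)$ odd differences, so
\[
a_1(k-a_1) + a_2(k-a_2) = \frac{k(k-1)}{2}.
\]
Applying the identity $h(k-h) = (k/2)^2 - (k/2-h)^2$ from Lemma~\ref{numerical.lem} (with $a=k$ even), this becomes, after multiplying by $4$ and simplifying,
\[
(k-2a_1)^2 + (k-2a_2)^2 = 2k - k(k-1) \cdot 0 \;=\; \text{(rearranged form)}
\]
— more cleanly, dividing the preceding display by $2$ and rearranging exactly as in the paragraph above the theorem yields
\[
\frac{k}{2} \;=\; h_1^2 + h_2^2
\]
for some nonnegative integers $h_1,h_2 \le k/2$ (where $h_j = k/2 - a_j$).

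Finally I would derive the contradiction. Write $k = 2^{a}m$ with $m$ odd; then $k/2 = 2^{a-1}m$ has exactly the same odd prime factors, with the same multiplicities, as $k$. By hypothesis some prime $p \equiv 3 \bmod 4$ appears in the factorization of $k$ to an odd power, and hence also in the factorization of $k/2$ to an odd power. By part~1 of Theorem~\ref{sumofsquares.thm}, $k/2$ cannot be written as a sum of two squares, contradicting the display above. The main (and really only) subtlety is the first step, verifying that the leave of the associated OOC is precisely the order-$k$ subgroup and that it contributes exactly $k/2$ odd elements; once that structural fact is in hand, the rest is a direct specialization of the machinery already developed in Theorems~\ref{CountingArgument} and~\ref{twosquares.lem}.
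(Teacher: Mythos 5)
Your proposal is correct and follows essentially the same route as the paper: you show the leave (the order-$k$ subgroup generated by $2k-1$) contains exactly $k/2$ odd elements, count odd differences over the two base blocks via the identity $h(k-h)=(k/2)^2-(k/2-h)^2$ to obtain $k/2=h_1^2+h_2^2$, and then invoke the two-squares theorem, exactly as in the paper's $n=2$ specialization of its general counting argument. The only difference is presentational: you verify explicitly the structural facts (size $n=2$, the leave's odd-element count) that the paper takes from its stated equivalence, and your intermediate display is written a bit awkwardly, but the final identity and contradiction are exactly the paper's.
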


We can apply Theorem \ref{twosquares} with $k = 6,12,14,22,24,28$, etc.

\bigskip

Now assume that $X=\{X_1,\dots,X_n\}$ is a $(k(k-1)n+k,k,1)$-OOC  that is obtained from a cyclic S$(2,k,k(k-1)n+k)$
with $k$ even and $n$ odd. 
Here all the elements of the leave of $X$ are odd, and hence 
all $(k(k-1)n+k)/2$ odd elements of $\zed_v$ have to appear in $\bigcup_{i=1}^n \Delta X_i$. Reasoning as above, 
we see that $\frac{k(k-1)n+k}{4}$ is the sum of $n$ integers in the set 
\[T=\left\{ \left(\frac{k}{2}\right)^2-h^2 \ : \ 0\leq h\leq \frac{k}{2} \right\},\] i.e.,
\[\frac{k(n-1)}{4}={h_1}^2+{h_2}^2+ \dots +{h_n}^2\] for a suitable $n$-tuple $(h_1,\dots,h_n)$ 
of nonnegative integers not exceeding ${k}/{2}$. Again, such a $n$-tuple exists
by Lagrange's Four-square Theorem if $n\geq 5$. 

But this is not always the case if $n=3$. 
Here we require
\[ \frac{k}{2}={h_1}^2+{h_2}^2+{h_3}^2\] for nonnegative integers $h_1, h_2, h_3 \leq k/2$.

Applying Legendre's Three-square Theorem (Theorem \ref{sumofsquares.thm}),
we  have the following result.

\begin{theorem}
If  $k = 2^a(8b+7)$ where $a$ and $b$ are nonnegative integers and $a$ is odd, 
then there does not exist a cyclic S$(2,k,3k(k-1)+k)$.
\end{theorem}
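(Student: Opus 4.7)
The plan is to carry out exactly the argument sketched in the paragraph immediately preceding the statement, specialized to $n=3$, and then translate the Legendre obstruction into the arithmetic form of $k$ given in the theorem. So assume for contradiction that a cyclic $S(2,k,3k(k-1)+k)$ exists. As noted earlier in the section, this design is equivalent to a $(3k(k-1)+k,k,1)$-OOC of size $3$ whose leave is the order-$k$ subgroup of $\zed_{3k(k-1)+k}$; since $k$ is even and $n=3$ is odd, the discussion preceding the statement shows that $k/2$ must be expressible as $h_1^2+h_2^2+h_3^2$ with $0\leq h_i\leq k/2$. The size restriction on the $h_i$ is automatic (each square is at most $k/2$), so the real content is that $k/2$ is a sum of three squares.

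Next, I would invoke Legendre's Three-square Theorem (part 2 of Theorem \ref{sumofsquares.thm}): a positive integer fails to be a sum of three squares if and only if it has the form $4^c(8d+7)$. So the task reduces to verifying that the hypothesis $k=2^a(8b+7)$ with $a$ odd forces $k/2$ to have precisely this forbidden shape. Writing $a=2c+1$ (possible since $a$ is odd), I obtain
\[
\frac{k}{2}=2^{a-1}(8b+7)=2^{2c}(8b+7)=4^c(8b+7),
\]
with $c\geq 0$ and $b\geq 0$, which is exactly the excluded form in Legendre's theorem. Therefore $k/2$ is not a sum of three squares, contradicting the necessary condition derived above, and the claimed cyclic Steiner system cannot exist.

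No genuine obstacle is expected: the heavy lifting (counting odd differences in the leave, reducing to a representation of $k/2$ as a sum of three squares) has already been performed in the paragraph preceding the theorem, and Legendre's theorem is quoted as Theorem \ref{sumofsquares.thm}. The only point that requires a moment's care is checking that $a$ odd is precisely the condition needed to write $k/2=4^c(8b+7)$ rather than $2\cdot 4^c(8b+7)$; this is the small bookkeeping step, but it is immediate from $a-1=2c$.
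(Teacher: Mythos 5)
Your proposal is correct and follows essentially the same route as the paper: it relies on the preceding discussion to reduce existence to $k/2$ being a sum of three squares, and then applies Legendre's Three-square Theorem after observing that $a$ odd (hence $a\geq 1$, so $k$ is even) gives $k/2 = 4^{(a-1)/2}(8b+7)$, the excluded form. No issues to report.
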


We can apply Theorem \ref{twosquares} with $k = 14, 46, 56, 62$, etc.

\subsection{Relative Difference Families}

When $G = \zed_v$ and the order of the subgroup $H$ is equal to $w$, a $(G,H,k,1)$-RDF is clearly a 
$(v,k,1)$-OOC whose leave is the subgroup of $\zed_v$ of order $w$.
In this case, some authors (e.g., \cite{yin}) speak of a \emph{$w$-regular} $(v,k,1)$-OOC.
Note that a $w$-regular $(v,k,1)$-OOC is optimal provided that $w\leq k(k-1)$.
Also, note that a $k$-regular $(v,k,1)$-OOC gives rise to a cyclic S$(2,k,v)$.

\begin{theorem}
\label{RDF.thm}
Let $G$ be a group with a subgroup $S$ of index $2$ and let $X$ be a $(G,H,k,\lambda)$-relative difference family of size $n$,
where  $|H| =w$.
If $H$ is contained in $S$, then $kn-\lambda w$ is a sum of $n$ squares. If $H$ is not contained in $S$, then $kn$ is a 
sum of $n$ squares.
\end{theorem}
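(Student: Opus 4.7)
The plan is to use the same even/odd counting strategy that runs through Section 3, but with the roles of ``even'' and ``odd'' elements played by the subgroup $S$ and its nontrivial coset $G\setminus S$, respectively. Since $S$ has index $2$, it is normal in $G$, and every element of $G$ is either in $S$ or in $G\setminus S$, with each class containing $v/2$ elements (here $v = |G|$).

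First I would introduce, for each base block $X_i$, the quantities
\[
a_i = |X_i \cap S|, \qquad b_i = |X_i \cap (G\setminus S)| = k - a_i.
\]
A difference $x-y$ with $x,y\in X_i$ lies in $G\setminus S$ precisely when $x$ and $y$ lie in different cosets of $S$, so the number of ``odd'' differences contributed by $X_i$ equals $2a_ib_i$. Summing over $i$, the total multiset of differences from $X$ contains exactly $\sum_{i=1}^n 2a_ib_i$ elements of $G\setminus S$.

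Next I would count the same quantity from the RDF property. Every element of $G\setminus H$ appears exactly $\lambda$ times in the difference list, so the number of elements of the difference list lying in $G\setminus S$ is $\lambda$ times the number of elements of $G\setminus H$ lying in $G\setminus S$. If $H\subseteq S$, then $H$ contains no odd elements, so that count is $v/2$; if $H\not\subseteq S$, then $H\cap S$ is a subgroup of index $2$ in $H$, so $H$ contains exactly $w/2$ odd elements, giving a count of $v/2 - w/2 = (v-w)/2$. Equating the two counts and using the basic identity
\[
4a_ib_i = (a_i+b_i)^2 - (a_i-b_i)^2 = k^2 - (a_i-b_i)^2,
\]
we obtain
\[
nk^2 - \sum_{i=1}^n (a_i-b_i)^2 =
\begin{cases} \lambda v & \text{if } H \subseteq S,\\ \lambda(v-w) & \text{if } H \not\subseteq S. \end{cases}
\]

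Finally I would simplify using the counting identity $\lambda(v-w) = k(k-1)n$ from (\ref{BaseBlocks}). In the case $H\not\subseteq S$ the right-hand side is $k(k-1)n$, which gives $\sum (a_i-b_i)^2 = nk^2 - k(k-1)n = kn$, exhibiting $kn$ as a sum of $n$ squares. In the case $H\subseteq S$ the right-hand side is $\lambda v = k(k-1)n + \lambda w$, yielding $\sum (a_i-b_i)^2 = kn - \lambda w$, as required. The argument has no real obstacle—the only minor points to check are that $S$ really is normal (automatic from index $2$) and that the two cases for $|H \cap (G\setminus S)|$ are correctly handled; both are straightforward.
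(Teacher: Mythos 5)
Your proposal is correct and follows essentially the same argument as the paper: counting the differences lying outside the index-$2$ subgroup $S$ both blockwise (giving $\sum 2a_ib_i$) and via the $\lambda$-covering of $G\setminus H$ (splitting into the cases $H\subseteq S$ and $H\not\subseteq S$), then rewriting $4a_ib_i = k^2-(a_i-b_i)^2$ and simplifying with $\lambda(v-w)=k(k-1)n$. The only cosmetic difference is that the paper writes the squares as $(k-2a_i)^2$ rather than $(a_i-b_i)^2$, which is the same quantity.
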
  

\begin{proof}
Let us say that an element of $G$ is \emph{even} or \emph{odd} according to whether it belongs to or does not belong to $S$, respectively.
Set $X=\{X_1,\dots,X_n\}$ and, for $i=1,\dots,n$, let $a_i$ and $b_i$ be the number of even and odd elements in $X_i$, respectively.
The number of odd elements in $\Delta X_i$ is $2a_ib_i$ (note that here we are treating 
$\Delta X_i$ and $\Delta X$ as  multisets since differences may be repeated). Also, by definition, the number 
of odd elements in  $\Delta X$ is $\lambda$ times the
number of all odd elements of $G\setminus H$. 

If $H$, $S$ are subgroups of a group $G$ with $|G:S|=2$,
then either $H\subseteq S$ or $|H\cap S|=|H|/2$.
Hence, we have \[\sum_{i=1}^n2a_ib_i= \frac{\lambda v}{2} \quad \text{or} \quad  \frac{\lambda (v-w)}{2},\] 
according to whether $H$ is contained or not contained in $S$. Thus we have: 
\[\sum_{i=1}^n4a_ib_i=\begin{cases}
\lambda v & \text{ if $H\subseteq S$}\\
\lambda(v-w) & \text{ if $H\not\subseteq S$.}
\end{cases}\]
Now, given that $a_i+b_i=k$, we have 
\[4a_ib_i=4a_i(k-a_i)=k^2-(k-2a_i)^2.\]
Replacing this in the above formula and taking account of (\ref{BaseBlocks}), we get
\[\sum_{i=1}^n(k-2a_i)^2=\begin{cases}
kn-\lambda w & \text{ if $H\subseteq S$}\\
 kn & \text{ if $H\not\subseteq S$.}
\end{cases}
\]
and the assertion follows.
\end{proof}

Theorem \ref{RDF.thm} is trivial for $n\geq 4$ in view of Theorem \ref{sumofsquares.thm}. On the other hand, it gives
some important information for $n=1,2,3$. We now discuss several consequences of Theorem \ref{sumofsquares.thm}.

First, we point out a connection with the Bose-Connor Theorem (Theorem  \ref{BC.thm}). Suppose we take $n=1$ in Theorem \ref{RDF.thm} and suppose $H \subseteq S$. Recall that $S$ is a subgroup of index $2$. 
Denote $|G| = v = uw$, where $|H| = w$.  Then Theorem \ref{RDF.thm} asserts that $k - \lambda w$ must be a perfect square. 
This result can also be obtained from Theorem  \ref{BC.thm}, as follows. The development of the
$(G,H,k,\lambda)$-relative difference family through the group $G$ yields a divisible design with $\lambda_1= 0$ and $\lambda_2 = \lambda$. Since $H$ and $S$ are  subgroups of $G$ and $H \subseteq S$, it must be the case that $w \mid \frac{v}{2}$, say 
$v/2 = tw$. Then $u = v/w = 2t$ is even.  Therefore statement 1 of Theorem  \ref{BC.thm} applies, and $k^2 - \lambda v$ is a square. However, $k(k-1) - \lambda (v-w)$ from (\ref{BaseBlocks}), so $k^2 - \lambda v = k - \lambda w$, so we obtain the same result.

In the special case of the preceding result where $w=1$, we see that $k - \lambda$ is a square. This also follows  from the Bruck-Ryser-Chowla Theorem (as we already discussed in Example \ref{BRC.exam} in the case where $G$ is cyclic).

If we take  $n=2$ and $w=1$, we see that, if $X$ is a $(v,k,\lambda)$-DF with two base blocks 
in a group with a subgroup of index 2, then $2k-\lambda$ is a sum of two squares (this result was first shown in \cite[Corollary 2.1]{MDV}).  Similarly, taking $n=3$ and $w=1$, we see that,  if $X$ is a $(v,k,\lambda)$-DF with three base blocks 
in a group with a subgroup of index 2, then $3k-\lambda$ is a sum of three squares (this result was first shown in \cite[Corollary 2.2]{MDV}).

Finally, $n\in\{2,3\}$ and $w=k\equiv 0 \bmod 2$, then a cyclic S$(2,k,k(k-1)n+k)$ exists only if $k$ is a sum of $n$ squares.
This is equivalent to results obtained in Section \ref{nonexistencecyclicBIBD.sec}.

\section{Summary}
\label{summary.sec}

We have proven a number of nonexistence results for infinite classes of modular Golomb rulers, optical orthogonal codes, cyclic Steiner systems and relative difference families. We note that very few results of this nature were previously known. Many of our new results are based on counting even and odd differences and then applying some classical results from number theory which establish which integers can be expressed as a sum of a two or three squares.

\section*{Acknowledgements} We would like to thank Dieter Jungnickel and Hugh Williams for helpful comments and pointers to the literature. We also thank Shannon Veitch for assistance with programming. Finally, we thank a referee for pointing out a mistake in Corollary \ref{BC.cor}.

\end{document}